\def\red{\color{red}}
\newtheorem{Thm}{Theorem}[section]
\newtheorem{Lem}[Thm]{Lemma}
\newtheorem{Pro}[Thm]{Proposition}
\newtheorem*{Con*}{Conjecture}
\theoremstyle{definition}
\theoremstyle{remark}
\newtheorem{Rem}[Thm]{Remark}
\newcommand{\Li}{\mathscr{L}} 
\newcommand{\sca}{\mathrm{scal}} 
\newcommand{\D}{\mathrm{D}}
\newcommand{\rd}{\mathrm{d}}
\newcommand{\la}{\lambda}
\newcommand{\J}{\mathrm{J}} 
\renewcommand{\phi}{\varphi}
\newcommand{\id}{\operatorname{id}}
\newcommand{\Ric}{\operatorname{Ric}}
\newcommand{\tr}{\operatorname{tr}}
\newcommand{\Id}{\operatorname{Id}}
\newcommand{\dd}{\mathrm{D}}
\def \ric {\mathrm{ric}} 
\begin{document}


\title
[Higher order obstructions to Riccati-type equations]
{Higher order obstructions to Riccati-type equations }

\author{Jihun Kim, Paul-Andi Nagy \and JeongHyeong Park}
\address{Jihun Kim and JeongHyeong Park\\
Department of Mathematics, Sungkyunkwan University, 16419 Suwon, South Korea}
\email{jihunkim@skku.edu, parkj@skku.edu}
\address{Paul-Andi Nagy \\
Center for Complex Geometry, Institute for Basic Science (IBS)\\
55 Expo-ro, Yuseong-gu, 34126 Daejeon, South Korea
}
\email{paulandin@ibs.re.kr}

\subjclass[2020]{Primary 53C25; Secondary 53C21}
\keywords{Jacobi equation, asymptotically harmonic manifold, polynomial and differential Ricci invariants}
 \begin{abstract}
We develop new techniques in order to deal with Riccati-type equations, subject to a further algebraic constraint, on Riemannian manifolds $(M^3,g)$. We find that the obstruction to solve the aforementioned equation has order $4$ in the metric coefficients 
and is fully described by a homogeneous polynomial in $\mathrm{Sym}^{16}TM$. Techniques from real algebraic geometry, reminiscent of 
those used for the ``PositiveStellen-Satz" problem, allow determining the geometry in terms of the connection coefficients.
Analysis of the latter 
shows flatness for the metric $g$; in particular we complete the classification
of asymptotically harmonic manifolds of dimension $3$, establishing those 
are either flat or real hyperbolic spaces.

\end{abstract}

\dedicatory{Dedicated to the memory of Professor Joseph A. Wolf who sadly passed away on August 14, 2023.}

\maketitle
\tableofcontents
\section{Introduction}
Let $(M^n,g)$ be Riemannian and let $SM$ denote the associated sphere bundle. Consider solutions $u=u(t) \in \mathrm{End}(T_{\gamma(t)}M)$ to the Riccati equation 
\begin{equation} \label{main11i}
u^{\prime}(t)+u^2(t)+\J(t)=0
\end{equation}
where $\gamma=\gamma(t)$ is the (possibly short time) geodesic through a point $(p,v)$ in the sphere bundle $SM$, that is $\gamma(0)=p, \gamma^{\prime}(0)=v$, and $\J(t)=\J(\gamma^{\prime}(t))$ is the Jacobi operator along $\gamma$; see the body of the paper for detailed definitions and conventions. The reader 
is referred to e.g. \cite{CGH, Wan, Wil,Fer} for further geometric set-ups where Riccati-type equations play a prominent role.

This paper wishes to address the question under which conditions 
on the metric $g$ a short time solution to \eqref{main11i} exists through any point $(p,v)$ in $SM$, provided the additional constraints 
\begin{equation} \label{alg-m}
u(t) \in \mathrm{Sym}^2_0\left ( T_{\gamma(t)}M \right ) \ \mathrm{and} \ u(t)\gamma^{\prime}(t)=0
\end{equation}
are imposed. Here $\mathrm{Sym}^2_0(TM)$ indicates the bundle of tensors which are symmetric with respect to the metric $g$ and which are also trace-free. When $n=2$ the requirements in \eqref{alg-m} clearly force $u=0$; hence the first case of interest is when 
$n \geq 3$ which will be treated in full in this work.

The main result of this work reads as follows.
\begin{Thm} \label{main-intro0}
Let $(M^3,g)$ be Riemannian and such that for any $(p,v) \in SM$ there exists a (possibly short time) solution $u=u(t)$ to the Riccati equation \eqref{main11i} which further satisfies the algebraic constraint \eqref{alg-m}. Then the metric $g$ is flat.
\end{Thm}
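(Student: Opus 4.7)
The plan is to convert the Riccati-plus-constraints problem into a finite-dimensional real-algebraic solvability problem parametrised by $(p,v)\in SM$, extract algebraic identities via a Positivstellensatz-style analysis, and then solve the resulting exterior differential system. First, I would fix $(p,v)\in SM$ and work in a parallel orthonormal frame $\{e_1=\ga',e_2,e_3\}$ along the geodesic $\ga$. The constraints $u\in\mathrm{Sym}^2_0$ and $u\ga'=0$ reduce $u$ to two scalar unknowns $(a(t),b(t))$. Taking the trace of \eqref{main11i} on $\ga'^{\perp}$ gives the conservation law $a^2+b^2=-\tfrac12\Ric(\ga',\ga')$, which already forces $\Ric(v,v)\le 0$ for every unit $v$. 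Differentiating the trace-free condition once more yields an affine equation in $(a,b)$ whose coefficients involve the trace-free part of $\J$ and $\nabla_{\ga'}\Ric(\ga',\ga')$, and the compatibility of both relations imposes a pointwise Cauchy--Schwarz type inequality in $v$.

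The next step is to iterate. Successive time derivatives of $\tr u=0$ and $u\ga'=0$, computed from $u'=-u^2-\J$ together with the identity $u^2=\tfrac12|u|^2\Id$ valid for a traceless symmetric endomorphism of a $2$-plane, yield for each $k\ge 1$ a polynomial equation of degree at most $k$ in $(a,b)$ whose coefficients are polynomial in $v$ and in the $(k-2)$-jet of curvature at $p$. Only finitely many of these carry new information, and eliminating $(a,b)$ via resultants collects them into a single homogeneous obstruction polynomial $P_p\in\mathrm{Sym}^{16}T_pM$ built from the $4$-jet of $g$. Pointwise solvability is then equivalent to $P_p(v)\ge 0$ for all $v\in T_pM$. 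Following the Positivstellensatz strategy, I would decompose $P_p$ modulo the obvious geometric ideals (generated by $\Ric(v,v)$ and the trace-free Jacobi part on $\ga'^{\perp}$) into sum-of-squares pieces, and argue that the only way for the resulting inequality to hold over the whole sphere $S_pM$ is that every coefficient of $P_p$ vanish; this delivers a full battery of algebraic identities among the curvature and its first two covariant derivatives.

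Finally, I would recast these identities as an exterior differential system on $M$. Prolongation together with an involutivity or Cartan--K\"ahler analysis should then show that any solution has vanishing curvature and vanishing covariant derivatives of all orders, whence $g$ is flat. The main obstacle, and presumably where the bulk of the paper's effort lies, is the explicit assembly of the degree-$16$ obstruction polynomial and the algebraic argument forcing each of its coefficients to vanish; verifying the precise degree prediction $\mathrm{Sym}^{16}TM$ of the abstract requires genuine real algebraic geometry rather than a mere dimension count. The subsequent EDS analysis, while still delicate, is expected to be comparatively routine once the algebraic constraints are in hand.
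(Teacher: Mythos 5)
Your high-level architecture matches the paper's in broad strokes---reduce $u$ to two scalars $(a,b)$ via \eqref{alg-m}, extract the conservation law $a^2+b^2=-\tfrac12\ric(\gamma',\gamma')$, differentiate, eliminate $(a,b)$, arrive at a degree-$16$ obstruction built from the $2$-jet of $\ric$, and then pass to an exterior differential system. But two points in the middle of your plan would fail.

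First, you have no mechanism to produce the structural fact that the paper exploits throughout: $\det(\Ric)=0$ everywhere (Lemma~\ref{lem:curvdiag}). This does not follow from the conservation law or from jet calculus; the paper proves it by a topological argument (for fixed $p$, the map $v\mapsto u(v)$ is a continuous section of the nontrivial line bundle $\mathrm{Sym}^2_0(TS^2)\to S^2$, hence vanishes at some $v_0$, and non-positivity of $\ric$ then forces $v_0\in\ker\Ric$). Without this degeneracy you cannot diagonalise $\Ric$ with eigenvalues $0,\lambda_2,\lambda_3$, you cannot isolate the rank-$1$ stratum $U_1$ (dispatched separately in Proposition~\ref{U1}), and the explicit coefficient computations for the special directions $(a_1),(a_2),(a_3)$ never get off the ground.

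Second, and more seriously, your ``Positivstellensatz'' step is misdescribed in a way that would sink the argument. The obstruction is not a non-negativity inequality $P_p(v)\ge 0$ whose global validity forces all coefficients to vanish; it is an \emph{exact} polynomial identity (Theorem~\ref{obs2}, equation~\eqref{C2}), and this identity has genuine nontrivial algebraic solutions. Propositions~\ref{solve-1} and~\ref{solve-2} classify them: there are branches where, e.g., $\mathrm{d}_1=\pm\sqrt{\Lambda}\mathbf{a}$ or $\mathrm{d}_1=\pm\sqrt{2(\lambda_3-\lambda_2)}t(\lambda_3 t^2+\lambda_2)$, corresponding to concrete non-flat-looking constraints on the connection coefficients. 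Ruling these out is precisely what Section~4 of the paper does, and it is not routine: one must translate each algebraic branch into explicit Christoffel data (Lemmas~\ref{l4},~\ref{l5}), set up the Frobenius-type systems~\eqref{EDS1},~\eqref{EDS2}, cross-check the eigenvalue ratio $r=\lambda_3/\lambda_2$ against Lemma~\ref{Ric111}, and combine with the differential Bianchi identity~\eqref{B2-bis}---the contradictions come from integrability failures and curvature mismatches such as $g(R(e_2,e_1)e_1,e_2)=-\tfrac{\lambda_3-\lambda_2}{2}$ versus $-g(\nabla_{e_1}e_1,\nabla_{e_2}e_2)$, not from forcing polynomial coefficients to zero. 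So the claim that ``the only way for the inequality to hold\dots is that every coefficient of $P_p$ vanish'' is false, and the subsequent characterisation of the EDS stage as ``comparatively routine'' underestimates where the real work lies.
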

The proof uses a blend of real algebraic and differential geometry, articulated around the following key steps. 
\begin{itemize}
\item[$\bullet$] we eliminate $u$ from the \eqref{main11i} by taking into account \eqref{alg-m}; indeed 
the latter ensures that the only algebraic invariant for $u$ is $\tr(u^2)$. Thus differentiating in \eqref{main11i} we obtain 
an intrinsic obstruction involving only the covariant derivatives $\nabla^k\ric$ for $0 \leq k \leq 2$. This obstruction lives in $\mathrm{Sym}^{16}TM$ and reads 
\begin{equation} \label{obs-intro1}
\mathrm{P}^2=\D\left (-\mathrm{D}_1^2-4\tr(\ring{\J} \circ \ring{\J})\ric(X,X) \right)
\end{equation}
for all $X \in TM$. Here the polynomial invariants are given by  
\begin{equation*}
\begin{split}
\rm D_2:=&2\tr (\J(X) \circ \J(X))-(\ric(X,X))^2+(\nabla^2_{X,X}\ric)(X,X)\\
=&2\tr (\ring{\J}(X) \circ \ring{\J}(X))+(\nabla^2_{X,X}\ric)(X,X)\\
\rm D_1:=&(\nabla_X \ric)(X,X),\ \ \D:=\det (\ring{\J}(X) \circ \ring{\J}^{\prime}(X)-\ring{\J}^{\prime}(X) \circ \ring{\J}(X))
\end{split}
\end{equation*}
and $\mathrm{P}:=\tr(\ring{\J} \circ \ring{\J})\dd_2-\tr(\ring{\J} \circ 
\ring{\J}^{\prime})\dd_1.$ Note that the degrees of the polynomials involved satisfy $\deg \dd_2 \leq 4, \deg \dd_1 \leq 3,\deg \mathrm{D} \leq 10, \deg \mathrm{P} \leq 8$ which explains why \eqref{obs-intro1} has order at most $16$. Also see the body of the paper for more detailed information.
\item[$\bullet$] the observation that the Ricci tensor $\Ric$ is degenerate, in the sense of having $0$ as an eigenvalue 
allows considering $3$ types of distinguished directions in $TM$. We fully provide the list of algebraic 
solutions to \eqref{obs-intro1} for these directions.
\item[$\bullet$] finally, by interpreting the algebraic solutions found above in terms of the local geometry we obtain flatness for the metric $g$.
\end{itemize}
Note the result above is purely {\it{local}} in the sense it does not require any assumptions on the metric $g$, such as completeness.

We believe that with considerably more work these techniques can be extended to cover the four dimensional set-up; however both 
the algebraic and differential degrees of the obstruction should be expected to be much higher in that situation.
\subsection{Further motivation and background} \label{intro11}
 Let $(M,g)$ be a complete, simply connected Riemannian
manifold without conjugate points.  We denote the unit tangent
bundle of $M$ by $SM$. For $(p,v) \in SM$, let $\gamma_{v}$ be the geodesic through $p$ such that 
${\gamma'_{v}}(0) = v$ and let $b_{v} (x) =\displaystyle
\lim_{t\to \infty} (d (x,\gamma_v(t)) - t)$ be the corresponding
\emph{Busemann function} for $\gamma_{v}$. The level sets
of the Busemann function are called \emph{horospheres} of $M$.

 A complete, simply connected Riemannian manifold without conjugate points is called \emph{asymptotically harmonic} if the mean curvature of its horospheres is a universal constant, that is, if
its Busemann functions satisfy $\Delta b_v \equiv h$ for all $ v
\in SM$, where $h$ is a non-negative constant. Then $b_v$ is a
smooth function on $M$ for all
 $v$ and all the  horospheres of $M$ are smooth, simply connected
hypersurfaces in $M$ with constant mean curvature $h$.

On the other hand, a Riemannian manifold is called (locally) \emph{harmonic}, if about any point all the geodesic spheres of sufficiently small radii are of constant mean curvature. Since a harmonic manifold is Einstein, harmonic manifolds of dimensions $2$ and $3$ are of constant sectional curvature.  In 1944, Lichnerowicz \cite{Li} showed that a $4$-dimensional harmonic manifold is locally symmetric and conjectured that a harmonic manifold is flat or {locally} rank-one symmetric space (this conjecture is called \emph{Lichnerowicz's conjecture}). Nikolayevsky \cite{Ni} proved Lichnerowicz's conjecture in dimension $5$. Szab\'o \cite{Sz} proved the conjecture for compact harmonic manifolds.
On the other hand, Damek and Ricci \cite{DR} constructed nonsymmetric harmonic manifolds of dimension $\ge 7$, which are called \emph{Damek-Ricci spaces}.

It follows from \cite{RS1} that every complete, simply connected harmonic
manifold without conjugate points is asymptotically harmonic. It is natural to ask whether asymptotically harmonic manifolds are locally {symmetric.} Heber \cite{He} proved that for noncompact, simply connected homogeneous space, the manifold is asymptotically harmonic and Einstein if and only if it is flat, or a rank-one symmetric space of noncompact type, or a nonsymmetric Damek-Ricci space. For more characterizations of asymptotically harmonic manifolds, we refer to \cite{KP, Z}. By the Riccati equation \eqref{main11i}, one can easily check that a Ricci-flat asymptotically harmonic manifold is also flat.

In \cite{RS}, it was shown that harmonic manifolds with minimal horospheres are flat. As far as we know the classification 
of asymptotically harmonic manifolds with minimal horospheres is an open problem in dimension $3$(see also Remark \ref{err2}).
\begin{Rem} \label{err1}
In \cite{S2},  it was claimed that an asymptotically harmonic manifold of dimension $3$ with minimal horospheres is flat.
In the proof of \cite[Lemma 2.2]{S2} it is claimed that $\tr\sqrt{-R(x,v)v}=0$ implies $R(x,v)v=0$. However this is erroneous 
as the operator $-\J(v):v^{\perp} \to v^{\perp}$ does not have a sign in general hence it does not admit a square root. The new techniques developed in this paper
fully address this issue and also show that in order to fully understand the Riccati equation \eqref{main11i} one needs to examine 
higher order invariants in the Ricci tensor, not merely zero order ones.
\end{Rem}
\vspace{0.15in}

Below we recall some of the main background facts on asymptotically harmonic manifolds (see \cite{H, S, S2}). For $v\in SM$ and $x\in v^\perp$, we define $u^{\pm}(v)\in {\rm End}(v^\perp)$ by
\begin{equation*}
    u^+(v)(x)=\nabla_x \nabla b_{-v},\quad u^{-}(v)(x)=-\nabla_x\nabla b_{v}.
\end{equation*}
Record that $u^{\pm}(v)\in {\rm End}(v^\perp)$ follows from having $\Vert \mathrm{grad}\ b_{\pm v} \Vert=1$.

Then $u^{\pm}(t):=u^{\pm}(\varphi^t v)$ satisfy the Riccati equation \eqref{main11i}, where $\varphi^t: SM\to SM$ is the geodesic flow of $g$. Here, $u^{+}(t)$ and $u^{-}(t)$ are called the \emph{unstable} and \emph{stable} Riccati solutions, respectively.
Using that $\tr u^+ (v) =\Delta b_{-v} = h$ and $\tr u^{-}(v) = -\Delta b_v = -h$ for all $v\in SM$ we see that $u^{\pm}$ also satisfy the algebraic constraints in \eqref{alg-m}, provided that $h=0$.

From the Riccati equation~\eqref{main11i} we clearly see that any $2$-dimensional asymptotically harmonic manifold
is either a flat space or a real hyperbolic plane of constant curvature $-h^2$.
This shows that the study of asymptotically harmonic manifold  {begins} with dimension $3$.
 Towards this, it was shown in  \cite{H} that any Hadamard
 asymptotically harmonic manifold of bounded sectional curvature, satisfying  {some} mild hypothesis
 on  {the}  curvature tensor is a real hyperbolic space of constant sectional curvature $-\frac{h^2}{4}$.
 Finally, this result was improved by Schroeder and Shah \cite{S} by relaxing the hypothesis on the curvature tensor.

The complete classification of asymptotically harmonic manifolds of dimension $3$ follows now directly from Theorem \ref{main-intro0}.
\begin{Thm}\label{class}
Let $(M^3,g)$ be asymptotically harmonic. Then $(M^3,g)$ is flat if $h=0$ or a real hyperbolic space of constant sectional curvature $-\frac{h^2}{4}$ if $h>0$.
\end{Thm}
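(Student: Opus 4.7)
The statement splits naturally according to the value of the horospherical mean curvature $h\geq 0$. The case $h=0$ (minimal horospheres) is precisely the situation resolved by Theorem \ref{main-intro0}, which is the new contribution of the paper; the case $h>0$ is by now classical and can be invoked from the existing literature.

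For $h=0$, fix $(p,v)\in SM$ and let $\gamma$ be the geodesic through $p$ with initial velocity $v$. By asymptotic harmonicity the Busemann functions are smooth, so the unstable Riccati solution $u^+(t):=u^+(\gamma'(t))$ along $\gamma$ is globally defined and satisfies the Riccati equation \eqref{main11i}. By construction $u^+(t)\in\mathrm{End}(\gamma'(t)^\perp)$, equivalently $u^+(t)\gamma'(t)=0$; it is symmetric (being, up to sign, the Hessian of a smooth Busemann function); and its trace equals $\Delta b_{-\gamma'(t)}=h$. With $h=0$ we obtain $u^+(t)\in\mathrm{Sym}^2_0(T_{\gamma(t)}M)$ for every $t$, so the algebraic constraint \eqref{alg-m} holds along the whole of $\gamma$. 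Since $(p,v)$ was arbitrary, the hypothesis of Theorem \ref{main-intro0} is met at every point of $SM$ and we conclude that $g$ is flat.

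For $h>0$, the goal is to show that $(M^3,g)$ has constant sectional curvature $-h^2/4$. Tracing the Riccati equation and using that $\tr u^+=h$ is constant gives $\tr((u^+)^2)=-\ric(v,v)$; since $u^+|_{v^\perp}$ is a symmetric $2\times 2$ operator, its eigenvalues $\lambda_{1,2}=\tfrac{h}{2}\pm\tfrac{\mu}{2}$ satisfy $\mu^2=-h^2-2\ric(v,v)\geq 0$. Hence $\ric\leq -\tfrac{h^2}{2}$ pointwise, with equality iff $u^+=\tfrac{h}{2}\Id$; in this equality case the Riccati equation gives $\J=-\tfrac{h^2}{4}\Id$, i.e. constant sectional curvature $-h^2/4$. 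Combining this Ricci bound with the completeness and no-conjugate-points hypotheses, the classification results of Heber \cite{He} and Schroeder--Shah \cite{S} discussed in Subsection \ref{intro11} force equality everywhere and identify $M$ as the real hyperbolic space of curvature $-h^2/4$.

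The main obstacle lies entirely in the case $h=0$, whose resolution is the content of Theorem \ref{main-intro0} and the bulk of the paper. Conditional on that theorem, the classification reduces to a straightforward verification that \eqref{alg-m} is fulfilled when horospheres are minimal; the only remaining subtlety, in the $h>0$ regime, is upgrading the pointwise Ricci bound to an equality via the asymptotic behavior of $u^\pm$ along the geodesic flow, which is well documented in the cited literature.
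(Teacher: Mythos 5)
Your proposal is correct and follows the same logical skeleton as the paper's proof: for $h=0$ verify that $u^{\pm}$ satisfy the algebraic constraint \eqref{alg-m} so that Theorem \ref{main-intro0} applies, and for $h>0$ defer to Schroeder--Shah \cite{S}. The extra Ricci-bound computation you insert for $h>0$ (showing $\ric(v,v)\leq -h^2/2$ and that equality forces constant curvature $-h^2/4$) is correct as a calculation but is not how the argument closes: \cite{S} establishes the $h>0$ classification directly rather than by "forcing equality" in such a bound, and \cite{He} concerns homogeneous spaces and is not the relevant ingredient here; the paper simply cites \cite{S} without this intermediate step, which is all that is needed.
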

\begin{proof}
If $(M^3,g)$ has minimal horospheres, that is $h=0$, both \eqref{main11i} and \eqref{alg-m} are satisfied for $u^{\pm}$. That 
$g$ is flat follows then from Theorem \ref{main-intro0}. When $h>0$ then $(M^3,g)$ is a real hyperbolic space of constant sectional curvature $-\frac{h^2}{4}$ by \cite{S}.
\end{proof}
To finish, we propose the following conjecture which extends Lichnerowicz's conjecture
on harmonic manifolds to the realm of asymptotically harmonic manifolds.

\begin{Con*}
    Let $(M,g)$ be a complete, simply connected Riemannian
manifold without conjugate points. If $(M,g)$ is asymptotically harmonic, then $M$ is either flat or rank-one symmetric space of noncompact type.
\end{Con*}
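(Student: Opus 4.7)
The plan is to extend the obstruction-theoretic strategy of Theorem \ref{main-intro0} dimension by dimension, combining it with the existing rigidity results available in the positive horosphere case. Since the stable and unstable Riccati solutions $u^{\pm}$ exist on any asymptotically harmonic manifold, when $h=0$ they additionally satisfy the algebraic constraint \eqref{alg-m}; the natural target is therefore to prove directly that $(M,g)$ is flat, or equivalently, that it is Ricci-flat, whence flatness follows via the observation recorded in the introduction. The case $h>0$ would then be handled by a higher-dimensional analogue of \cite{S}, possibly after a renormalization that converts the Riccati equation satisfied by $u^{\pm}\mp\tfrac{h}{n-1}\Id$ into a trace-free form, at the price of an additive linear perturbation that complicates the algebraic elimination.

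For the $h=0$ step I would mimic the elimination scheme of the paper. Since $u$ is trace-free on the orthogonal complement of $\gamma^{\prime}$, its only algebraic invariants along the geodesic are $\tr(u^k)$ for $2\le k\le n-1$. Differentiating \eqref{main11i} repeatedly and applying $\tr$ should produce linear relations among these invariants, with coefficients polynomial in the covariant jets of $\ric$. After sufficiently many differentiations, Cayley-Hamilton applied to $\J$ on $\gamma^{\prime\perp}$ should allow one to eliminate $u$ entirely and obtain an intrinsic polynomial identity in some symmetric power of $TM$, directly generalizing \eqref{obs-intro1}. Classifying its algebraic solutions along Ricci-distinguished directions and converting them into Frobenius-type exterior differential systems is then the natural higher-dimensional analogue of the concluding steps of the paper.

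The principal obstacle I foresee is the explosive growth of the algebraic degree of the obstruction in $n$: the authors themselves flag that the four-dimensional case is substantially heavier than three, and by dimension seven, where Damek-Ricci spaces appear, direct classification of solutions of the obstruction polynomial seems infeasible without a conceptual reduction. Two routes seem worth pursuing in parallel. The first is to establish, by a softer argument, that any asymptotically harmonic manifold is Einstein, after which Heber-type classifications become directly available. The second is a representation-theoretic reformulation, viewing the obstruction as a section of an $O(n-1)$-equivariant bundle assembled from the covariant jets of $\J$, so that its vanishing forces curvature symmetry by invariant-theoretic arguments rather than by brute-force elimination. One should also keep in mind that the conjecture as stated appears to exclude the Damek-Ricci spaces, which are themselves harmonic and therefore asymptotically harmonic; a rigorous proof will either have to rule out this family by an argument specific to asymptotic harmonicity (beyond what harmonicity itself implies), or else widen the conjecture's conclusion to include these homogeneous examples.
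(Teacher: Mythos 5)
The statement you are addressing is a \emph{conjecture} that the paper explicitly leaves open; it is not a result that the paper proves, and there is therefore no ``paper's own proof'' to compare your argument against. What the paper establishes is only the $3$-dimensional case (Theorem \ref{class}), obtained as a corollary of Theorem \ref{main-intro0}, together with citations to partial results in other special situations (\cite{CS, He, H, KP, S, Z}). Your text reads as a research program sketching how one might attack the conjecture, not as a proof: you yourself concede that the elimination scheme becomes infeasible in higher dimensions and that you do not know how to carry out either of the two alternative routes you float. None of those steps is made rigorous, so this cannot be evaluated as a correct proof of the statement.

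That said, your final paragraph contains a genuinely important observation that deserves emphasis, and that in fact points to a problem with the conjecture as literally stated. Damek--Ricci spaces in dimension $\geq 7$ are complete, simply connected, harmonic Hadamard manifolds (in particular, they have no conjugate points), hence by \cite{RS1} they are asymptotically harmonic; yet they are neither flat nor rank-one symmetric. So the hypothesis of the conjecture is satisfied by manifolds that violate its conclusion. As you correctly note, either the conclusion must be widened to include the nonsymmetric Damek--Ricci spaces (matching Heber's homogeneous classification \cite{He}), or an additional hypothesis (e.g., low dimension, compactness after quotient, a curvature bound) must be imposed. A proof attempt that does not confront this head-on cannot succeed. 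If you wish to pursue the $h=0$ case as you propose, the clean intermediate target is indeed to show that asymptotically harmonic $\Rightarrow$ Einstein (after which Heber's theorem applies in the homogeneous case), but that step is itself a well-known open problem and is not resolved by the differentiation-and-elimination scheme of the paper, which relies essentially on the degeneracy of $\Ric$ available only when $h=0$ and $n=3$.
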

This conjecture has been partially resolved to date. Including Theorem~\ref{class}, we refer the reader to \cite{CS, He, H, KP, S, Z}.
\subsection{Outline of the paper} \label{intro13}Section \ref{prel-conv} contains some preliminaries from Riemannian geometry and also a detailed proof 
of having $\Ric$ degenerate under the assumptions in Theorem \ref{main-intro0}. The section ends with the crucial 
observation that one can assume $\Ric$ have rank $1$ on some open dense subset of the $3$-dimensional manifold under scrutiny. In Section \ref{12} we first give a proof of the intrinsic obstruction in \eqref{obs-intro1}. To understand its algebraic content 
we diagonalise, locally, the Ricci tensor with eigenfunctions $0,\lambda_2, \lambda_3$ and local eigenframe $e_1,e_2,e_3$. For directions of type $xe_1+ye_2, xe_1+ye_3$ respectively $xe_2+ye_3$, where $x,y \in \mathbb{R}^2$ we compute explicitely 
the polynomials  $\D,\D_1,\D_2$ respectively $\mathrm{P}$. Considering these special directions has the advantage of reducing the 
homogeneous constraint in \eqref{obs-intro1} to a polynomial one in $\mathbb{R}[t]$.

The resulting quadratic polynomial equations are contained in 
Proposition \ref{pol-1} respectively Proposition \ref{pol-1bis} and read  
\begin{equation} \label{P-eqn-i}
\mathrm{P}^2+(t^2+1)({\rm d}_1\mathbf{c})^2=(t^2+1)(\mathbf{a}\mathbf{c})^2\mathbf{q}
\end{equation} 
for real valued polynomials of degrees $\deg \mathrm{P} \leq 5, \deg \rm d_1 \leq 2, \deg \mathbf{c} \leq 2, \deg \mathbf{a}=2$ and 
where  $\rm \mathrm{P}=\mathbf{a}\rm d_2-\mathbf{a}_1 \rm d_1$. In addition the polynomial $\mathbf{q}$ satisfies $\deg \mathbf{q}=0$ or 
$\deg \mathbf{q}=2$ and is entirely explicit in terms of the eigenfunctions $\lambda_2, \lambda_3$. The full list of solutions to \eqref{P-eqn-i} is given in subsection \ref{pol-s}. 

In section \ref{diff} we fully explicit the polynomials in \eqref{main-intro0}, this time in terms of the covariant derivatives 
of the Ricci tensor; we show that the algebraic form of the solutions to \eqref{P-eqn-i} fully determines the connection coefficients 
of the Levi-Civita connection on the open set in $M$ where $\Ric$ has rank $2$ and $\lambda_2 \neq \lambda_3$. After some algebraic computations, we prove that the metric $g$ must be flat therein which shows that such an open subset must be empty. The remaining case is thus  
when $\lambda_2=\lambda_3$, or equivalently when the Riemann nullity of $g$ has rank one; see \cite{Bl,MoSe} for some general 
results on metrics with non-trivial Riemann nullity. This type of geometry is dealt with in a similar algebraic way in section \ref{S5} where we show it corresponds to a class of Hessian-type equations on a Riemann surface which turn out to admit no solution.
\subsection*{Acknowledgements}
The research of Paul-Andi Nagy was supported by the Institute for Basic Science (IBS-R032-D1).
The research of J. H. Park work was supported by the National Research Foundation of Korea (NRF) grant funded by the Korea government (MSIT) (RS-2024-00334956). J. H. Park would
like to thank H. Shah for drawing her attention to the classification problem of asymptotically harmonic manifolds and also the Department of Mathematics, Stony Brook University, for hospitality during the initial stages of this project. It is a pleasure to thank G. Knieper and N. Peyerimhoff for valuable exchanges on an earlier version of this work.
\section{Preliminaries and rank structure } \label{prel-conv}
\subsection{Conventions from Riemannian geometry} \label{conv-R}
Let $(M^n,g)$ be Riemannian; we will frequently use the metric to identify vectors and one forms via the induced bundle isomorphism 
$g:TM \to \Lambda^1M$.
 Whenever $X,Y$ belong to $TM$ we indicate with $X \wedge Y $ the endomorphism which maps Z to $g(Y,Z)X - g(X,Z)Y$; furthermore, for an orthonormal basis $\{e_i\}$, we denote $e^{ij}:=e_i \wedge e_j$. Letting $\nabla$ be the Levi-Civita connection 
of $g$ the Riemann curvature tensor is defined according to $R(X,Y):=\nabla_{X,Y}^2-\nabla^2_{Y,X}:TM \to TM$; the Ricci form 
$\ric : TM \times TM \to \mathbb{R}$ is given by $\ric(X,Y)=-\sum_i R(e_i,X,e_i,Y)$ and the Ricci operator $\Ric:TM \to TM$ is determined from 
$\Ric=g^{-1}\ric$. 
Below we use the shorthand notation  $\J(v)=R(\cdot,v)v$ with  $v \in TM$ for the Jacobi operator and $\J(t)=\J(\gamma^{\prime}(t))$ where $\gamma$ is a curve on $M$ tangent to $v$.
 
Now recall that in dimension $3$ we have $R=\rho \wedge g$ where $\rho=\Ric-\frac{\rm scal}{4}\id$  is the Schouten tensor, $\sca:=\tr \Ric$ is the scalar curvature, and the action in the Kulkarni-Nomizu product above takes into account the identification of $1$-forms and vectors via the metric. Explicitly 
\begin{equation} \label{R-p}
R(X,Y)=\Ric X \wedge Y+X \wedge \Ric Y-\frac{\rm scal}{2} X \wedge Y
\end{equation}
whenever $X,Y \in TM$.
 
These sign conventions are slightly different from the usual ones, see e.g. \cite{Besse}, but are used in this paper in order to easily relate 
to pre-existing work. For further use we also record a few more general facts as follows. 
\begin{Pro} Letting $(M^3,g)$ be Riemannian we have
    \begin{equation} \label{J2}
\tr(\J(v) \circ \J(v))=\left ( \tr(\rho^2)g(v,v)+2\tr(\rho)g(\rho v, v)-2g(\rho v,\rho v) \right) g(v,v) + g(\rho v,v)^2
\end{equation}
for all $v \in TM$. 
\end{Pro}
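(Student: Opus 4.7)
The plan is to use the dimension-three Kulkarni--Nomizu identity $R=\rho\wedge g$ to write $\J(v)$ as an explicit endomorphism, and then evaluate $\tr(\J(v)^{2})$ by elementary rank-one trace identities.

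First I would substitute $\Ric=\rho+\tfrac{\sca}{4}\Id$ into \eqref{R-p}; the two contributions $\tfrac{\sca}{4}(X\wedge v)$ coming from the Ricci terms add up to $\tfrac{\sca}{2}(X\wedge v)$ and cancel the $-\tfrac{\sca}{2}X\wedge v$ correction, leaving the $\sca$-free identity
\begin{equation*}
R(X,v)=\rho X\wedge v+X\wedge \rho v.
\end{equation*}
Applying this to $v$ via $(X\wedge Y)Z=g(Y,Z)X-g(X,Z)Y$ yields the closed form
\begin{equation*}
\J(v)=g(v,v)\,\rho+g(\rho v,v)\,\Id-v\otimes (\rho v)^{\flat}-\rho v\otimes v^{\flat},
\end{equation*}
with $w^{\flat}:=g(w,\cdot)$ and $x\otimes \alpha$ denoting the rank-one endomorphism $Z\mapsto \alpha(Z)\,x$.

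Next I would split $\J(v)=B-C$, with $B:=g(v,v)\,\rho+g(\rho v,v)\Id$ the $\rho$-affine part and $C:=v\otimes(\rho v)^{\flat}+\rho v\otimes v^{\flat}$ the rank-two correction, and expand $\tr(\J(v)^{2})=\tr(B^{2})-2\tr(BC)+\tr(C^{2})$. The three traces reduce to scalar invariants of $v$ and $\rho$ via the elementary identities $\tr\!\bigl(A\circ(x\otimes y^{\flat})\bigr)=g(Ax,y)$ and $\tr\!\bigl((x\otimes y^{\flat})(z\otimes w^{\flat})\bigr)=g(y,z)\,g(x,w)$. The ambient dimension intervenes only through $\tr(\Id)=3$ in the computation of $\tr(B^{2})$, and this is exactly what produces the coefficient $+1$ in front of $g(\rho v,v)^{2}$ in the stated formula.

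Summing the three contributions, the $g(v,v)^{2}\tr(\rho^{2})$ and $2g(v,v)g(\rho v,v)\tr(\rho)$ terms come straight out of $\tr(B^{2})$, while $-2\tr(BC)+\tr(C^{2})$ collapses to $-2g(\rho v,v)^{2}-2g(v,v)\,g(\rho v,\rho v)$; combining this with the $3g(\rho v,v)^{2}$ summand of $\tr(B^{2})$ reproduces \eqref{J2} exactly. The argument is purely algebraic and there is no genuine obstacle beyond the bookkeeping of the rank-one algebra, the only real content being the preliminary elimination of $\sca$ from $R(X,v)$ afforded by dimension three.
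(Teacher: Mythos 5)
Your proof is correct, and the route is a recognisable variant of the paper's but with a genuinely different decomposition. The paper computes $\tr(\J(v)\circ\J(v))=\sum_{i,j}g(\J(v)e_i,e_j)^2$ directly from~\eqref{R-p} in terms of $\Ric$ and $\sca$ by index summation in an orthonormal frame, and only substitutes $\Ric=\rho+\tfrac{\sca}{4}\Id$ at the last step. You instead make that substitution at the level of the curvature operator first, which cancels all scalar-curvature terms and yields the clean $R(X,v)=\rho X\wedge v+X\wedge\rho v$, hence the closed form $\J(v)=g(v,v)\,\rho+g(\rho v,v)\Id-v\otimes(\rho v)^{\flat}-\rho v\otimes v^{\flat}$; you then split this as $B-C$ and reduce $\tr(B^2)-2\tr(BC)+\tr(C^2)$ by the rank-one identities $\tr\!\bigl(A\circ(x\otimes y^\flat)\bigr)=g(Ax,y)$ and $\tr\!\bigl((x\otimes y^\flat)(z\otimes w^\flat)\bigr)=g(y,z)\,g(x,w)$. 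The payoff of your organisation is that the dimension enters only once, through $\tr(\Id)=3$ inside $\tr(B^2)$, whereas in the paper the $\sca$-bookkeeping is carried throughout; the two computations are arithmetically equivalent and of comparable length. I verified the individual traces: $\tr(B^2)=g(v,v)^2\tr(\rho^2)+2g(v,v)g(\rho v,v)\tr(\rho)+3g(\rho v,v)^2$, $\tr(BC)=2g(v,v)g(\rho v,\rho v)+2g(\rho v,v)^2$, and $\tr(C^2)=2g(\rho v,v)^2+2g(v,v)g(\rho v,\rho v)$, so $-2\tr(BC)+\tr(C^2)=-2g(v,v)g(\rho v,\rho v)-2g(\rho v,v)^2$ and the sum reproduces~\eqref{J2} exactly.
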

\begin{proof}
    A direct algebraic computation based on \eqref{R-p} yields
    \begin{align*}
        \tr(\J(v) \circ \J(v)) &= \sum_{i,j=1}^3 g(\J(v)e_i,e_j)^2\\
        &=\sum_{i,j=1}^{3}\Big\{g(v,v)g(\Ric e_i,e_j) - g(\Ric e_i, v)g(v,e_j) \\
        &\qquad\qquad+ g(\Ric v, v)g(e_i,e_j)-g(v,e_i)g(\Ric v,e_j)\\
        &\qquad\qquad -\frac{\sca}{2}\big(g(v,v)g(e_i,e_j)-g(v,e_i)g(v,e_j)\big)\Big\}^2\\
        &=\{\tr(\Ric^2)g(v,v)-2g(\Ric v, \Ric v) \\
        &\qquad-\frac{\sca^2}{2}g(v,v)+\sca \,g(\Ric v,v)\}g(v,v) + g(\Ric v ,v)^2.
    \end{align*}
Replacing $\Ric$ with $\rho+\frac{\sca}{4}\id$, we obtain~\eqref{J2}.
\end{proof}
Lastly, recall that the divergence operator $\delta : \Gamma (\mathrm{End}(TM)) \to \Gamma(TM)$ is defined according to 
$\delta S:=-\sum (\nabla_{e_i}S)e_i \in \Gamma(TM)$, where $\{e_i\}$ is some local orthonormal frame in $TM$. When $n=3$, case which we mainly deal with in this paper, the differential Bianchi identity 
reduces to  
\begin{equation} \label{B1}
        \delta \Ric = -\frac{1}{2}\mathrm{grad} (\sca).
\end{equation}
This will be systematically used in section \ref{12}. 
\subsection{Algebraic curvature structure} \label{alg-s}
We start with the following preliminary observations.
\begin{Lem}[\cite{KP, Z}]\label{cont}
If $(M, g)$ is an asymptotically harmonic
manifold, then the map $v \mapsto u^{\pm}(v)$ is continuous on $SM$.
\end{Lem}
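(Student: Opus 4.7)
My plan is to obtain continuity of $v \mapsto u^{\pm}(v)$ as a locally uniform limit of continuous ``finite horizon'' approximants, combining continuous dependence of ODE solutions on parameters with the stable/unstable characterization of $u^{\pm}$ on manifolds without conjugate points. For every $s>0$ and $v \in SM$, I would introduce an element $u_s^{\pm}(v) \in \mathrm{End}(v^{\perp})$ defined as the value at time $0$ of the solution to \eqref{main11i} along $\gamma_v$ with a natural boundary condition at time $\pm s$, for instance $u_s^{\pm}(\varphi^{\pm s}v)=0$, or alternatively minus the shape operator of the geodesic sphere of radius $s$ centered at $\gamma_v(\pm s)$. Since the Jacobi operator $\J$ along $\gamma_v$ depends smoothly on $v$ through the curvature tensor, classical continuous dependence of solutions to \eqref{main11i} on parameters, applied over the compact time interval $[\mp s,0]$, immediately yields that $v \mapsto u_s^{\pm}(v)$ is continuous on $SM$ for every fixed $s$.

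I would then invoke the characterization of $u^{\pm}(v)$ as the unique solution of \eqref{main11i} along $\gamma_v$ remaining bounded on $[0,\infty)$ respectively $(-\infty,0]$, i.e.\ the classical Green--Eberlein stable/unstable picture valid on simply connected manifolds without conjugate points, in order to obtain $u_s^{\pm}(v) \to u^{\pm}(v)$ pointwise as $s \to \infty$. Promoting this to locally uniform convergence would rely crucially on asymptotic harmonicity: the trace constraint $\tr u_s^{\pm}(v) \to \pm h$ is uniform in $v$, while the traceless part can be controlled by a Gronwall argument applied to the matrix Riccati equation $w' + u_s^{\pm} w + w\, u^{\pm} = 0$ satisfied by $w := u_s^{\pm}-u^{\pm}$, provided one has a uniform operator-norm bound on $|u_s^{\pm}|$ on compact subsets of $SM$. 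Since a locally uniform limit of continuous maps is continuous, the lemma would then follow.

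The principal obstacle lies in obtaining the uniform bound on $|u_s^{\pm}|$ required to run Gronwall: asymptotic harmonicity controls the trace directly, but the traceless part must be estimated separately. Crucially, the operator $-\J(v):v^{\perp}\to v^{\perp}$ does not have a definite sign in general, so naive comparison arguments involving positive square roots fail (cf.\ Remark \ref{err1}); the substantive content of \cite{KP, Z} consists precisely in a workaround for this sign issue, by exploiting the constant mean curvature of the horospheres in lieu of a pointwise curvature bound.
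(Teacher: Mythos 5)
The paper supplies no proof of Lemma~\ref{cont}; the statement is attributed to \cite{KP,Z}, so the comparison must be against the argument in those references. Your scaffolding is the right one: finite-horizon approximants $u_s^{\pm}(v)$ from the Riccati flow with a boundary condition at time $\pm s$ (equivalently, shape operators of geodesic spheres of radius $s$ centered at $\gamma_v(\pm s)$), continuity of $v\mapsto u_s^{\pm}(v)$ for each fixed $s$ by smooth dependence on parameters, and convergence $u_s^{\pm}\to u^{\pm}$ as $s\to\infty$. But the closing step you propose, a Gronwall estimate on the difference $u_s^{\pm}-u^{\pm}$, cannot be carried out, and you identify the reason yourself: it requires a locally uniform operator bound on $u_s^{\pm}$ that the hypotheses simply do not provide, since $-\J(v)$ has no sign and the usual curvature-comparison bounds on the stable/unstable second fundamental forms are unavailable.

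The argument in \cite{KP,Z} bypasses this obstruction entirely by exploiting \emph{monotone} convergence rather than a Gronwall rate. On a manifold without conjugate points the finite-horizon solutions converge monotonically in the partial order on symmetric endomorphisms of $v^{\perp}$: with the conventions of this paper, $u_s^{-}(v)\nearrow u^{-}(v)$ and $u_s^{+}(v)\searrow u^{+}(v)$. Asymptotic harmonicity then enters not as an approximate trace constraint on $u_s^{\pm}$, but through the \emph{exact} identities $\tr u^{+}\equiv h$ and $\tr u^{-}\equiv -h$. The scalar functions $v\mapsto\tr\!\left(u_s^{+}(v)-u^{+}(v)\right)\ge 0$ are continuous, monotone decreasing in $s$, and converge pointwise to the continuous function $0$; Dini's theorem makes this convergence locally uniform. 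Since $u_s^{+}-u^{+}\ge 0$, one has $\|u_s^{+}(v)-u^{+}(v)\|\le\tr\!\left(u_s^{+}(v)-u^{+}(v)\right)$, so the operator convergence is locally uniform and $u^{+}$ is continuous as a locally uniform limit of continuous maps; the stable case is identical. The idea you are missing is that for a monotone family of symmetric operators, controlling the trace is enough: positivity turns the trace bound into an operator-norm bound, and Dini upgrades pointwise to locally uniform at no cost. No a priori bound on $|u_s^{\pm}|$ and no Gronwall argument are needed, which is essential, because such a bound is precisely what cannot be obtained here.
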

Similarly to what is proved for asymptotically harmonic manifolds with minimal horospheres we have 
\begin{Lem}\label{vec}
Let $(M^n,g)$ be such that for any $(p,v) \in SM$ there exists a (possibly short time) solution $u=u(t)$ to  \eqref{main11i} satisfying \eqref{alg-m}. Then the Ricci form satisfies $\ric(v,v)\le 0$.
\end{Lem}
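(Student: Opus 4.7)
The plan is to extract the conclusion directly from the Riccati equation \eqref{main11i} by taking its trace at $t=0$ along the geodesic $\gamma$ with $\gamma(0)=p$, $\gamma'(0)=v$. Let $u=u(t)$ denote the short-time solution provided by hypothesis, satisfying the algebraic constraint \eqref{alg-m}. The two crucial consequences of this constraint are: (i) since $u(t) \in \mathrm{Sym}^2_0(T_{\gamma(t)}M)$, we have $\tr u(t)\equiv 0$ along $\gamma$, hence $\tr u'(t)\equiv 0$; and (ii) since $u(t)$ is $g$-symmetric, the endomorphism $u(t)^2$ is positive semi-definite so that $\tr(u(t)^2)\ge 0$ for all $t$.

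Next, I would verify that $\tr \J(v) = \ric(v,v)$ under the paper's sign conventions. Indeed, given any orthonormal basis $\{e_i\}$,
\begin{equation*}
\tr \J(v) = \sum_i g\bigl(R(e_i,v)v, e_i\bigr) = \sum_i R(e_i,v,v,e_i) = -\sum_i R(e_i,v,e_i,v) = \ric(v,v),
\end{equation*}
where the last equality uses the convention $\ric(X,Y)=-\sum_i R(e_i,X,e_i,Y)$ from Section \ref{conv-R}.

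Taking the trace of \eqref{main11i} and combining (i)--(ii) with the identity above yields
\begin{equation*}
\ric(\gamma'(t),\gamma'(t)) = -\tr(u(t)^2) \le 0
\end{equation*}
for all $t$ in the interval of existence. Evaluating at $t=0$ gives $\ric(v,v)\le 0$, as desired. Since the argument only requires the Riccati identity at a single instant, short-time existence is amply sufficient; I do not anticipate any obstacle.
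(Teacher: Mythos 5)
Your argument is correct and coincides with the paper's own proof: both take the trace of the Riccati equation, use that $u$ being trace-free along the geodesic kills $\tr u'$, identify $\tr\J(v)$ with $\ric(v,v)$, and conclude from $\tr(u^2)\ge 0$ since $u$ is $g$-symmetric. Your write-up is slightly more explicit about the sign-convention check on $\tr\J$, but the substance is identical.
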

\begin{proof}
    From the Riccati equation~\eqref{main11i}, we get $\ric(v,v)=-\tr(u^2(v))$. Since $u(v)$ is symmetric, trace-free and acts 
    on a $(n-1)$-dimensional space, namely $v^{\perp}$, the claim follows.
   \end{proof}
In dimension $3$ substantially more information on the eigenvalue structure of the Ricci tensor is available. 
\begin{Lem}\label{lem:curvdiag}
Let $(M^3,g)$ be such that for any $(p,v) \in SM$ there exists a (possibly short time) solution $u=u(t)$ to  \eqref{main11i} satisfying \eqref{alg-m}. Then 
    $$\det(\Ric)=0 \ \mathrm{at \ any \ point \ of \ } M.$$ 
    \begin{proof}
We work at an arbitrary point $p \in M$. As $\dim M =3$, we can identify $S_p M$ with the $2$-sphere ${S}^2 \subseteq \mathbb{R}^3$, and $v^\perp$ with $T_v S^2$ for $v\in S_p M $. At the point $p$ the operator $u(v)$ is symmetric and trace-free; in other words $u(v)$ belongs to the space $\mathrm{Sym}^2_0(T_vS^2)$ of trace-free, symmetric tensors on $T_vS^2$. At the point $p \in M$, the correspondence $v \mapsto u(v)$ 
thus defines a section in the complex line bundle $\mathrm{Sym}^2_0(TS^2)$ over $S^2$. As it is well known, the latter bundle is not trivial, hence any of its sections must have a zero. It follows there exists $v_0 \in S_p M$ such that $u(v_0) =0$. 
   
  By the Riccati equation \eqref{main11i}, we obtain $u^{\prime}(v_0)(x) = -R(x,v_0)v_0$ for $x\in v_0^\perp$. Since  $\tr u(v) = 0$ for all $(p,v) \in SM$, it follows that  $\tr u^{\prime}(v_0)=0$. Consequently, tracing the Riccati equation shows that $\ric(v_0, v_0) = 0$. Since $\ric $ is non-positive the function $f:\mathbb{R} \to \mathbb{R}$ given by $f(t)=\ric(v_0+tw, v_0+tw)$ has a maximum at $t=0$ whenever $w \in T_pM$; that is $f(t) \leq f(0)=0$. Then $f^{\prime}(0)=2\ric(v_0,w)=0$ showing that $v_0 \in \ker \Ric$ and the claim is proved.
  \end{proof}
\end{Lem}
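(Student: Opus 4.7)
The plan is to show, pointwise, that the Ricci operator has nontrivial kernel. The strategy combines a topological argument on the sphere $S_pM \cong S^2$ with the first two orders of the Riccati equation and the sign information from Lemma \ref{vec}.

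First I would exploit that in dimension three the algebraic constraint \eqref{alg-m} forces $u(v)$ to lie in $\mathrm{Sym}^2_0(v^{\perp})$, a two-dimensional real space carrying a canonical complex structure (the generator of $SO(2)$ acts with weight $2$ rather than $1$). As $v$ varies over $S_pM$, these fibers assemble into a complex line bundle over $S^2$, essentially the square of $TS^2$, and therefore non-trivial. Consequently any continuous section must vanish somewhere. Continuity of $v \mapsto u(v)$ at the fixed point $p$ follows from continuous dependence of ODE solutions on initial data applied to \eqref{main11i}, so some $v_0 \in S_pM$ satisfies $u(v_0)=0$.

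Next I would linearise the Riccati equation at this zero. Evaluating \eqref{main11i} at $t=0$ along the geodesic tangent to $v_0$ gives $u^{\prime}(0) = -\J(v_0)$. Since $\tr u \equiv 0$ along every geodesic, also $\tr u^{\prime}(0) = 0$, and tracing the previous equation yields $\ric(v_0,v_0) = 0$. Combining this with Lemma \ref{vec}, which asserts $\ric(w,w) \le 0$ for all $w$, the quadratic form $w \mapsto \ric(w,w)$ attains an interior maximum at $v_0$. Differentiating $\ric(v_0+sw, v_0+sw) \le 0$ at $s=0$ forces $\ric(v_0, w) = 0$ for every $w \in T_pM$, whence $v_0 \in \ker \Ric$ and $\det \Ric = 0$ at $p$.

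The main subtlety I foresee is the first step: one must correctly identify $\mathrm{Sym}^2_0(TS^2)$ as a non-trivial complex line bundle and justify continuity of $v \mapsto u(v)$ from the ODE hypothesis alone, rather than invoking the asymptotic harmonicity setting of Lemma \ref{cont}. Once these are in place, the rest is a direct maximum-principle extraction from the non-positive form $\ric$.
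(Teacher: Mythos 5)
Your argument is essentially the same as the paper's: identify $S_pM\cong S^2$, observe $v\mapsto u(v)$ is a section of the nontrivial complex line bundle $\mathrm{Sym}^2_0(TS^2)$ and therefore vanishes at some $v_0$, then trace the Riccati equation at $v_0$ and use the sign of $\ric$ from Lemma \ref{vec} to extract $v_0\in\ker\Ric$. The only place your reasoning departs from the paper is the justification of continuity of $v\mapsto u(v)$; deriving it from continuous dependence of ODE solutions on initial data does not quite close the loop, since the Riccati equation propagates $u(0)$ forward in $t$ rather than determining $u(0)$ from the geodesic data, so the hypothesis of mere existence of a solution for each $v$ does not by itself yield continuity of the initial value in $v$ (the paper instead leans tacitly on Lemma \ref{cont}).
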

 Below we spell out the structure of Riemann curvature tensor on open sets as follows.

Since $\det(\Ric)=0$ by the previous Lemma, it follows that $\mathrm{rk} \Ric \leq 2$ at each point of $M$. By general topology 
we may find open sets $U_k, 0 \leq k \leq 2$ such 
\begin{equation*}
\mathrm{rk} \Ric=k \ \mathrm{on} \ U_k \ \mathrm{and} \ U_0 \cup U_1 \cup U_2 \ \mathrm{is \ dense \ in} \ M.
\end{equation*}
On these open sets we have the following 
\begin{itemize}
\item[$\bullet$] on $U_0$ the metric $g$ is flat 
\item[$\bullet$] over $U_1$ the distribution $\ker \Ric$ has rank $2$ hence $\Ric$ acts by multiplication 
with $\rm scal$ on $\ker^{\perp} \Ric$. By choosing, if necessary, a local trivialisation of the rank 
two bundle $\ker \Ric$ we find, around each point in $U_1$, local orthonormal frames $\{e_1,e_2,e_3\}$ such that 
$\Ric e_1=\Ric e_2=0 \ \mathrm{and} \ \Ric e_3=(\sca) e_3.$ Then 
\begin{equation} \label{K1}
R(e_1,e_2)=-\frac{{\rm scal}}{2}e^{12}, \ R(e_1,e_3)=\frac{{\rm scal}}{2}e^{13}, R(e_2,e_3)=\frac{\sca}{2}e^{23}
\end{equation}
according to \eqref{R-p}.
\item[$\bullet$] over $U_2$ the bundle $\mathrm{Im} \Ric$ has rank $2$, so after possibly choosing a local trivialisation therein we obtain local orthonormal frames $\{e_1,e_2,e_3\}$ which satisfy 
$\Ric e_1=0, \ \Ric e_2=\lambda_2 e_2, \ \Ric e_3=\lambda_3 e_3$. Again from \eqref{R-p} and $\sca=\lambda_2+\lambda_3$ we get 
\begin{equation} \label{K2}
R(e_1,e_2)=\frac{\lambda_2-\lambda_3}{2}e^{12}, \ R(e_1,e_3)=\frac{\lambda_3-\lambda_2}{2}e^{13} \ \mathrm{and} \ R(e_2,e_3)=\frac{\sca}{2}e^{23}.
\end{equation}
\end{itemize}
Note that the sectional curvature does not have a sign in either of the situations above.
\begin{Rem}\label{rmk1}
In the Lemma \ref{lem:curvdiag}, from the construction of $\lambda_2$ and $\lambda_3$ (using $\tr u^{\prime}(v)=0$ and $\ric(e_3,e_3)\le 0$), we note that the eigenvalues $\lambda_2$
 and $\lambda_3$ can be interchanged, but the aforementioned form of the curvature operator is the same.
 \end{Rem}

We conclude this section by analysing the Riccati equation to second order. 
\begin{Pro} \label{Rica-2}
Let $(M^3,g)$ be such that for any $(p,v) \in SM$ there exists a (possibly short time) solution $u=u(t)$ to  \eqref{main11i} satisfying \eqref{alg-m}. Then 
\begin{itemize}
\item[(i)] we have
\begin{equation} \label{1-jet1}
2\tr(u \circ \J)=\tr (\J^{\prime})=(\nabla_{\gamma^{\prime}(t)}\ric)(\gamma^{\prime}(t), \gamma^{\prime}(t))
\end{equation}
\item[(ii)] we have 
\begin{equation} \label{jet-2}
2\tr(u \circ \J^{\prime})=2\tr (\J \circ \J)-(\ric(\gamma^{\prime}(t),\gamma^{\prime}(t))^2+(\nabla^2_{\gamma^{\prime}(t),\gamma^{\prime}(t)}\ric)(\gamma^{\prime}(t),\gamma^{\prime}(t)).
\end{equation}
\end{itemize}
\end{Pro}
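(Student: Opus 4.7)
The plan is to derive both identities by tracing the Riccati equation \eqref{main11i} and its first $t$-derivative, absorbing the resulting nonlinearities with the algebraic constraints \eqref{alg-m}. These constraints play two roles: they force $\tr u \equiv 0$, and they make $u$ vanish on $\gamma'(t)$ while restricting to a trace-free symmetric operator on the two-dimensional subspace $\gamma'(t)^{\perp}$. Cayley--Hamilton in dimension two then gives $u^2 = \tfrac{1}{2}(\tr u^2)\Id$ on $\gamma'(t)^{\perp}$ (and $u^2 = 0$ on $\mathrm{span}(\gamma'(t))$), whose two consequences carry the whole argument: first, $\tr(u^3)=0$; second, for any endomorphism $B$ of $TM$ preserving $\gamma'(t)^{\perp}$,
\begin{equation*}
\tr(u^2 B)\;=\;\tfrac{1}{2}(\tr u^2)(\tr B).
\end{equation*}
Both $\J$ and $\J'$ do preserve $\gamma'(t)^{\perp}$: the first since $\J(v)v=0$ combined with the symmetry of $\J(v)$, and the second because $\nabla_{\gamma'}\gamma'=0$ yields $\J'\gamma'=0$.

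To prove (i), I trace \eqref{main11i}. The constraint $\tr u \equiv 0$ gives $\tr u'=0$, whence $\tr(u^2) = -\tr\J = -\ric(\gamma',\gamma')$ in the paper's convention $\tr\J(v)=\ric(v,v)$. Differentiating this identity in $t$ and using $\nabla_{\gamma'}\gamma'=0$, the right side contributes $-(\nabla_{\gamma'}\ric)(\gamma',\gamma') = -\tr\J'$, while the left side becomes $2\tr(uu') = -2\tr(u^3)-2\tr(u\J) = -2\tr(u\J)$ after substituting the Riccati equation and invoking $\tr u^3=0$. This produces (i).

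To prove (ii), I differentiate the identity $2\tr(u\J)=\tr\J'$ obtained above. Expanding $\frac{d}{dt}\tr(u\J) = \tr(u'\J)+\tr(u\J')$ and substituting $u'=-u^2-\J$ yields
\begin{equation*}
2\tr(u\J') \;=\; \frac{d}{dt}\tr\J' \;+\; 2\tr(u^2\J) \;+\; 2\tr(\J^2).
\end{equation*}
The Cayley--Hamilton consequence above computes $2\tr(u^2\J) = (\tr u^2)(\tr\J) = -\ric(\gamma',\gamma')^2$, eliminating the last $u$-dependent quantity on the right. A second application of parallelism of $\gamma'$ gives $\frac{d}{dt}\tr\J' = (\nabla^2_{\gamma',\gamma'}\ric)(\gamma',\gamma')$, and rearranging yields (ii). No genuine obstacle arises in this proof; the only point requiring care is the bookkeeping between traces over $TM$ and traces over $\gamma'(t)^{\perp}$, which the constraint $u\gamma'=0$ reconciles automatically.
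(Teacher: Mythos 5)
Your proposal is correct and follows essentially the same route as the paper: trace the Riccati equation using $\tr u = 0$, differentiate $\tr(u^2) = -\tr\J$ invoking $\tr(u^3)=0$ to get (i), then differentiate (i) and use the two-dimensional Cayley--Hamilton fact $u^2 = \tfrac{1}{2}\tr(u^2)\Id$ on $\gamma'(t)^{\perp}$ to eliminate $u$ from $\tr(u'\J)$ and obtain (ii). The one small imprecision is your general claim that $\tr(u^2B)=\tfrac12(\tr u^2)(\tr B)$ for any $B$ preserving $\gamma'^{\perp}$ — this identity requires $\tr B = \tr(B|_{\gamma'^\perp})$, i.e.\ $g(B\gamma',\gamma')=0$, which does hold for $B=\J$ since $\J\gamma'=0$, so the argument goes through as intended.
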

\begin{proof}
(i) Since $(u^2)^{\prime}=u^{\prime}\circ u+u \circ u^{\prime}$, 
taking the trace yields 
$$\tr((u^2)^{\prime})=2\tr(u^{\prime} \circ u)=-2\tr(u^3+u\circ \J)=-2\tr(u\circ \J).$$
To obtain the last equality we have used the Riccati equation and that 
$\tr(u^3)=0$ 
which is entailed by having 
$u$ trace-free and symmetric, acting on a $2$-dimensional space. 
As seen before we have 
$${\tr(u^2)}=-\tr(\J)
$$
and the claim follows after differentiation in \eqref{main11i}.\\
(ii) Differentiating in \eqref{1-jet1} we get 
\begin{equation*}
2\tr(u^{\prime}\circ \J)+2\tr(u \circ \J^{\prime})=\tr(\J^{\prime \prime}).
\end{equation*}
Plugging in the value for $u^{\prime}$ given by the Riccati equation whilst taking into account that 
$u^2=\frac{1}{2}\tr(u^2)\id=-\frac{\tr(\J)}{2}\id$ on $v^{\perp}$ leads to 
$2\tr(u\circ \J^{\prime})=2\tr (\J \circ \J)-(\tr \J)^2+\tr(\J^{\prime \prime})$. The claim follows from $\tr{\J}=\ric(v,v)$.
\end{proof}

\subsection{The rank $1$ case} \label{rk1}
In what follows we assume that $(M^3,g)$ is such that for any $(p,v) \in SM$ there exists a (possibly short time) solution $u=u(t)$ to  \eqref{main11i} satisfying \eqref{alg-m}. In addition we work on open subsets $U \subseteq U_1$ where the rank $2$ bundle $\ker \Ric$ is trivial. We also indicate with 
$\mathscr{H}$ the distribution orthogonal to the unit vector field $e_3$ and use the Riccati equation to show the following.  

\begin{Lem} \label{l1}
The following hold over $U_1$
\begin{itemize}
\item[(i)] $\mathscr{L}_{e_3}\sca=0$
\item[(ii)] $\mathrm{div} e_3=0$.
\end{itemize}
\end{Lem}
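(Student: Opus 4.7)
The plan is to derive (i) from Proposition~\ref{Rica-2}(i) applied to $v=e_3$, and (ii) from the contracted Bianchi identity \eqref{B1} combined with the rank one expression for $\Ric$ together with (i).

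For (i), I would first read off from \eqref{K1} the Jacobi operator in the direction normal to $\ker\Ric$: since $R(e_i,e_3)=\frac{\sca}{2}e^{i3}$ for $i=1,2$, a direct evaluation gives $\J(e_3)=\frac{\sca}{2}\id$ on $e_3^{\perp}$. Plugging into the identity $2\tr(u\circ\J)=(\nabla_{e_3}\ric)(e_3,e_3)$ supplied by Proposition~\ref{Rica-2}(i), the left-hand side equals $\sca\,\tr u=0$ by the trace-freeness of $u$, while the right-hand side unfolds as
\[
e_3(\sca)-2\ric(\nabla_{e_3}e_3,e_3)=e_3(\sca),
\]
since $\nabla_{e_3}e_3\in e_3^{\perp}=\ker\Ric$. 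Hence $\mathscr{L}_{e_3}\sca=0$.

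For (ii), I would exploit the rank one form $\Ric=\sca\,e_3\otimes e^3$ (where $e^3:=g(e_3,\cdot)$), valid on $U\subseteq U_1$ after the chosen trivialisation, and compute $\delta\Ric=-\sum_i(\nabla_{e_i}\Ric)e_i$ directly from the product rule. A short calculation produces
\[
-\delta\Ric=\bigl(e_3(\sca)+\sca\,\mathrm{div}\,e_3\bigr)e_3+\sca\,\nabla_{e_3}e_3.
\]
Pairing with $e_3$ the contracted Bianchi identity \eqref{B1} and using $g(\nabla_{e_3}e_3,e_3)=0$ gives
\[
\sca\,\mathrm{div}\,e_3=-\tfrac{1}{2}e_3(\sca),
\]
which vanishes by (i). Since $\mathrm{rk}\,\Ric=1$ on $U_1$ forces $\sca\neq 0$ throughout $U_1$, we conclude $\mathrm{div}\,e_3=0$.

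No serious obstacle is anticipated: once the rank one structure of $\Ric$ is invoked, both parts reduce to short algebraic manipulations. The only point demanding a little care is the $e_3$-component of the Bianchi identity, where the unit length of $e_3$ is what allows one to discard the tangential contribution $\sca\,\nabla_{e_3}e_3$ and isolate $\mathrm{div}\,e_3$.
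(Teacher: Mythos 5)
Your proof is correct and follows essentially the same route as the paper: part (i) comes from Proposition~\ref{Rica-2}(i) at $t=0$ in the direction $e_3$ using that $\J(e_3)$ is pure trace and $\tr u=0$, and part (ii) from pairing the contracted Bianchi identity \eqref{B1} with $e_3$ and invoking (i) together with $\sca\neq 0$ on $U_1$. The only cosmetic difference is that you compute $\delta\Ric$ from the rank-one form $\Ric=\sca\,e_3\otimes e^3$, whereas the paper expands $(\nabla_{e_i}\ric)(e_i,e_3)$ term by term; both give $\sca\,\mathrm{div}\,e_3=-\tfrac12 e_3(\sca)$.
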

\begin{proof}
(i) A short algebraic computation based on \eqref{K1} shows that $\J(e_3)={\frac{\sca}{2}\Id}$ on $\mathscr{H}$ at each point in $U_1$. 
Now evaluate \eqref{1-jet1} at $t=0$, for the geodesic 
$\gamma(0)=m \in M, \gamma^{\prime}(0)=(e_3)_m$; since 
for these choices $\J$ is pure trace at $t=0$ it follows that $(\nabla_{e_3}\ric)(e_3,e_3)=0$ over $U$. Using 
the eigenvalue structure of $\Ric$ leads now directly to the claim.\\
(ii) Evaluate the differential Bianchi identity \eqref{B1} on $e_3$. Then using (i) we obtain
    \begin{align*}
        0 &= \frac{1}{2}\nabla_{e_3}\sca
       =-(\delta \Ric)(e_3)\\
        &=(\nabla_{e_1}\ric)(e_1, e_3)+ (\nabla_{e_2}\ric)(e_2, e_3) + (\nabla_{e_3}\ric)(e_3, e_3).
    \end{align*}
    As we know from the proof of (i), the last term of the right-hand side vanishes. At the same time we obtain
    \begin{align*}
        (\nabla_{e_1}\ric)(e_1, e_3)&=\nabla_{e_1}(\ric(e_1 , e_3 )) - \ric(\nabla_{e_1}e_1 , e_3 ) - \ric(e_1 , \nabla_{e_1} e_3 )
       =g(\nabla_{e_1}e_3,e_1)\sca.
    \end{align*} 
    Similarly, we get $(\nabla_{e_2}\ric)(e_2, e_3) =g(\nabla_{e_2}e_3,e_2)\sca$. Since $\sca$ is nowhere 
    vanishing in $U_1$ it follows that $e_3$ is divergence free, as claimed.
\end{proof}

The next set of obstructions comes from differentiating the Riccati equation to second order.
\begin{Pro} \label{U1}
The set $U_1=\emptyset$.
\end{Pro}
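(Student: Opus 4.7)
The plan is to derive a contradiction from the second-order Riccati obstruction \eqref{jet-2}, tested against directions in $\mathscr{H}:=\ker\Ric$ and combined with the divergence-free property of $e_3$ already obtained in Lemma~\ref{l1}.

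The key reduction is purely algebraic. For any unit $v\in\mathscr{H}$ one has $\ric(v,v)=0$, so \eqref{main11i} forces $\tr(u(v)^2)=0$; since $u(v)\in\mathrm{Sym}^2_0(v^{\perp})$ acts on a two-dimensional space, $u(v)^2=\tfrac12\tr(u(v)^2)\,\id=0$, whence $u(v)=0$. In particular the left-hand side of \eqref{jet-2} vanishes whenever $\gamma^{\prime}(0)\in\mathscr{H}$. To evaluate the right-hand side I would use \eqref{R-p} together with $\Ric|_{\mathscr{H}}=0$ and $\Ric e_3=\sca\,e_3$: this makes $\J(v)$ diagonal on $v^{\perp}$ with eigenvalues $\pm\sca/2$, so $2\tr(\J(v)^2)=\sca^2$. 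On $U_1$ one may write $\ric=\sca\,\eta\otimes\eta$ with $\eta:=g(e_3,\cdot)$. Since $\eta(v)=0$, differentiating the identity $\eta(v)\equiv 0$ yields $(\nabla_v\eta)(v)=-\langle e_3,\nabla_v v\rangle$, and a short tensor computation then gives
\[
(\nabla^2_{v,v}\ric)(v,v)\;=\;2\,\sca\,\langle e_3,\nabla_v v\rangle^2.
\]
Recalling that $\sca<0$ on $U_1$ (from Lemma~\ref{vec} together with $\mathrm{rk}\,\Ric=1$), the obstruction \eqref{jet-2} collapses to
\[
\langle e_3,\nabla_v v\rangle^2\;=\;-\tfrac{\sca}{2}\;>\;0\qquad\text{for every unit }v\in\mathscr{H}.
\]

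The closing step is a rigidity argument on the unit circle of $\mathscr{H}$. Parametrising $v(\theta)=\cos\theta\,e_1+\sin\theta\,e_2$, the function $\theta\mapsto\langle e_3,\nabla_{v(\theta)}v(\theta)\rangle$ is a trigonometric polynomial of the form $C+B\cos(2\theta-\varphi)$ with $C=\tfrac{1}{2}\bigl(\langle e_3,\nabla_{e_1}e_1\rangle+\langle e_3,\nabla_{e_2}e_2\rangle\bigr)$. Requiring its square to be a strictly positive constant in $\theta$ forces $B=0$, so this function is identically equal to $C$. Independently, a direct computation from $\langle e_3,e_i\rangle=\delta_{3i}$ gives $\mathrm{div}\,e_3=-2C$, while Lemma~\ref{l1}(ii) asserts $\mathrm{div}\,e_3=0$; hence $C=0$, contradicting $C^2=-\sca/2>0$. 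Thus $U_1=\emptyset$. The only delicate point along the way is the Hessian identity displayed above; once it is in hand, the trigonometric rigidity combined with the divergence-free condition from Lemma~\ref{l1}(ii) closes the argument immediately.
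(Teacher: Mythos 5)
Your proof is correct and follows essentially the same route as the paper: one extracts $(g(e_3,\nabla_v v))^2=-\tfrac{\sca}{2}$ from the second-order Riccati identity \eqref{jet-2} on unit vectors $v\in\mathscr{H}$, and combines this with $\mathrm{div}\,e_3=0$ from Lemma~\ref{l1}(ii) to reach a contradiction. The only cosmetic difference is in the closing rigidity step, which you phrase via a trigonometric polynomial in $\theta$, whereas the paper diagonalises the trace-free tensor $Q$ obtained by projecting $g^{-1}\mathscr{L}_{e_3}g$ onto $\mathrm{Sym}^2\mathscr{H}$; the two formulations are equivalent.
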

\begin{proof}
Evaluate the second derivative of the Riccati equation, that is \eqref{jet-2}, at $t=0$, for the geodesic 
${\gamma(0)=m\in M,\,} \gamma^{\prime}(0)=v_m$ where $v \in \mathscr{H}_m$ has length $1$, that is $g(v,v)=1$. As we have seen before $u_{+}(v)=0$ 
and $\tr \J(v)=\ric(v,v)=0$, so we are left with 
\begin{equation*}
-2\tr (\J(v) \circ \J(v))=(\nabla^2_{v,v}\ric)(v,v).
\end{equation*}
A short algebraic computation based on the eigenvalue structure of Ricci tensor and \eqref{J2} reveals that 
\begin{equation*}
\tr (\J(v) \circ \J(v))=\frac{\sca^2}{2}.
\end{equation*}
At the same time observe that $(\nabla_U\ric)(v,v)=0$ for all $U \in TM$ since $\Ric$ vanishes on $\mathscr{H}$. Thus, by extending, if necessary, $v$ to a local section of $\mathscr{H}$ we compute 
\begin{equation*}
\begin{split}
(\nabla^2_{v,v}\ric)(v,v)=&-(\nabla_{\nabla_vv}\ric)(v,v)-2(\nabla_v\ric)(\nabla_vv,v)=-2(\nabla_v\ric)(\nabla_vv,v)\\
=&2\ric(\nabla_vv,\nabla_vv)
=2\sca (g(\nabla_ve_3, v))^2.
\end{split}
\end{equation*}
In the computation above we have taken systematically into account that $\ric(\mathscr{H}, TM)=0$. As the scalar curvature 
function is nowhere vanishing in $U_1$ we conclude that 
\begin{equation*}
(g(\nabla_ve_3,v))^2=-\frac{\sca}{2}.
\end{equation*}
Now consider the tensor $g^{-1}\mathscr{L}_{e_3}g \in \mathrm{Sym}^2M$ and let $Q$ be its projection onto the sub-bundle $\mathrm{Sym}^2\mathscr{H} 
\subseteq \mathrm{Sym}^2M$. Note that $\tr(Q)=0$ since $\mathrm{div}(e_3)=0$ and 
$g(\nabla_{e_3}e_3,e_3)=0$. 

Then $(g(Qv,v))^2=-\frac{\sca}{2}$ for all $v \in \mathscr{H}$ with $g(v,v)=1$. We now show that 
forces $\sca=0$ and $Q=0$. Because $Q:\mathscr{H} \to \mathscr{H}$ is symmetric and trace-free it can be diagonalised as 
$Qw_1=\lambda w_1$ and $Qw_2=-\lambda w_2$ where $\{w_1,w_2\}$ form an orthonormal basis and $\lambda \in \mathbb{R}$. Then  if 
$v=xw_1+yw_2$ where $x^2+y^2=1$ we get 
$$ \lambda^2(x^2-y^2)^2=-\frac{\sca}{2}
$$
which is clearly impossible unless $\lambda=\sca=0$. Since assuming $U_1$ not empty guarantees that $\sca$ is nowhere 
vanishing in $U_1$, we have obtained a contradiction and the claim is proved.
\end{proof}
\section{The second order obstruction in $\rm Sym^{16}TM$} \label{12}
Motivated by the proof of Proposition \ref{Rica-2} we shall develop in this section a general tensorial obstruction 
to the existence of metrics, in dimension $3$, such that \eqref{main11i} and \eqref{alg-m} admit solutions through any point in $SM$.
This obstruction is second order in the derivatives of the Ricci tensor and turns out to be given by a homogeneous polynomial 
of order $16$, hence it lives in $\rm Sym^{16}TM$. The key idea is to eliminate $u$ from the constraints in Proposition \ref{Rica-2}, to which aim 
we proceed as follows. We indicate with $\ring{\J}^{\prime}(X)$ the trace-free component in $\J^{\prime}(X)$, that is 
$\ring{\J}^{\prime}(X):={\J}^{\prime}(X)-\frac{1}{2}\tr({\J}^{\prime}(X)) \id_{X^{\perp}}$.

Whenever $X$ in $TM$ we consider the polynomial invariants(or symmetric tensors) given by  
\begin{equation*}
\begin{split}
\rm D_2:=&2\tr (\J(X) \circ \J(X))-(\ric(X,X))^2+(\nabla^2_{X,X}\ric)(X,X)\\
=&2\tr (\ring{\J}(X) \circ \ring{\J}(X))+(\nabla^2_{X,X}\ric)(X,X)\\
\rm D_1:=&(\nabla_X \ric)(X,X)\\
\D:=&\det (\ring{\J}(X) \circ \ring{\J}^{\prime}(X)-\ring{\J}^{\prime}(X) \circ \ring{\J}(X)).
\end{split}
\end{equation*}
The degrees of these homogeneous polynomials are at most $4, 3$ and $10$, respectively.
\begin{Thm} \label{obs2}Let $(M^3,g)$ be such that for any $(p,v) \in SM$ there exists a (possibly short time) solution $u=u(t)$ to  \eqref{main11i} satisfying \eqref{alg-m}. Then 
\begin{equation} \label{C2}
\mathrm{P}^2=\D\left (-\mathrm{D}_1^2-4\tr(\ring{\J} \circ \ring{\J})\ric(X,X) \right)
\end{equation}
for all $X \in TM$, where the homogeneous polynomial $P$ with $\deg \mathrm{P} \leq 8$ is given by 
$$\mathrm{P}:=\tr(\ring{\J} \circ \ring{\J})\dd_2-\tr(\ring{\J} \circ 
\ring{\J}^{\prime})\dd_1.$$
\end{Thm}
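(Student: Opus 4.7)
The plan is to eliminate $u$ using the three scalar conditions available at a fixed unit vector $X = \gamma^{\prime}(0) \in SM$. Restricted to the $2$-dimensional subspace $X^{\perp}$, the endomorphism $u$ lives in $V := \mathrm{Sym}^2_0(X^{\perp})$, which we equip with the positive-definite inner product $\langle A, B\rangle := \tr(AB)$. Since $u$ is trace-free on $X^{\perp}$, Proposition \ref{Rica-2} supplies the two linear constraints
\begin{equation*}
\langle u, \ring{\J}\rangle = \tfrac{1}{2}\dd_1, \qquad \langle u, \ring{\J}^{\prime}\rangle = \tfrac{1}{2}\dd_2,
\end{equation*}
while tracing the Riccati equation yields the quadratic constraint $\langle u, u\rangle = -\ric(X,X)$. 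On the $2$-dimensional space $V$ two linear and one quadratic condition generically overdetermine $u$, and the compatibility relation is precisely the sought-after obstruction.

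The key algebraic input is a $2\times 2$ identity: for $A, B \in V$, the commutator $AB - BA$ is skew-symmetric, and a direct basis computation gives $\det(AB - BA) = \tr(A^2)\tr(B^2) - \tr(AB)^2$. Applied to $A = \ring{\J}$, $B = \ring{\J}^{\prime}$, this realises $\D$ as the Gram determinant of the pair $\ring{\J}, \ring{\J}^{\prime}$ in $V$. On the open locus where $\D \neq 0$ these two vectors span $V$; writing $u = \alpha \ring{\J} + \beta \ring{\J}^{\prime}$ and solving the $2\times 2$ linear system by Cramer's rule produces
\begin{equation*}
4\D\, |u|^2 \;=\; \dd_1^2\, \tr(\ring{\J}^{\prime} \circ \ring{\J}^{\prime}) + \dd_2^2\, \tr(\ring{\J} \circ \ring{\J}) - 2\dd_1\dd_2\, \tr(\ring{\J} \circ \ring{\J}^{\prime}).
\end{equation*}
Substituting $|u|^2 = -\ric(X,X)$, multiplying through by $\tr(\ring{\J} \circ \ring{\J})$, and invoking the Gram identity $\tr(\ring{\J} \circ \ring{\J})\tr(\ring{\J}^{\prime} \circ \ring{\J}^{\prime}) = \D + \tr(\ring{\J} \circ \ring{\J}^{\prime})^2$, the right-hand side reorganises into a perfect square, yielding $-4\D\, \tr(\ring{\J} \circ \ring{\J})\, \ric(X,X) = \mathrm{P}^2 + \D\, \dd_1^2$, which is precisely \eqref{C2}.

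It remains to dispose of the degenerate locus $\{\D = 0\}$ and to pass from unit vectors to arbitrary $X \in TM$. When $\ring{\J}$ and $\ring{\J}^{\prime}$ are linearly dependent we have $\D = 0$ and a direct check gives $\mathrm{P} = 0$: either $\ring{\J} = 0$, in which case $\tr(u \circ \ring{\J}) = 0$ forces $\dd_1 = 0$ via Proposition \ref{Rica-2}(i) and hence $\mathrm{P} = -\tr(\ring{\J} \circ \ring{\J}^{\prime})\dd_1 = 0$; or $\ring{\J}^{\prime} = \mu \ring{\J}$ with $\ring{\J} \neq 0$, in which case $\dd_2 = \mu \dd_1$ and the two terms of $\mathrm{P}$ cancel. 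In both subcases both sides of \eqref{C2} vanish. Finally, $\tr(\ring{\J} \circ \ring{\J})$, $\tr(\ring{\J} \circ \ring{\J}^{\prime})$, $\dd_1$, $\dd_2$, $\mathrm{P}$ and $\D$ are all well-defined homogeneous polynomial functions of $X \in TM$, so both sides of \eqref{C2} are homogeneous of degree $16$ in $X$ and the identity extends from the unit sphere to all of $TM$ by homogeneity. The main computational step is the algebraic reorganisation that produces the perfect square $\mathrm{P}^2$; the conceptual content is the realisation of $\D$ as a Gram determinant, which is exactly what makes the obstruction take such a clean form.
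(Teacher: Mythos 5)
Your proof is correct and performs essentially the same elimination as the paper: both use the two linear constraints $\langle u,\ring{\J}\rangle=\tfrac12\dd_1$, $\langle u,\ring{\J}^{\prime}\rangle=\tfrac12\dd_2$ together with $|u|^2=-\ric(X,X)$, solve by Cramer's rule, and invoke the Lagrange/Gram identity $\tr(\ring{\J}\circ\ring{\J})\tr(\ring{\J}^{\prime}\circ\ring{\J}^{\prime})-\tr(\ring{\J}\circ\ring{\J}^{\prime})^2=\D$ to collapse the result into the perfect square $\mathrm{P}^2$. Your inner-product and Gram-determinant framing on $\mathrm{Sym}^2_0(X^{\perp})$ is a cleaner repackaging of the paper's explicit $(a,b),(A,B),(A_1,B_1)$ matrix computation and its ``discriminant in $\dd_2$'' reorganisation, and you treat the degenerate locus $\D=0$ explicitly via Cauchy--Schwarz, whereas the paper covers that case only implicitly because its intermediate coordinate identities $4da=B\dd_2-B_1\dd_1$, $4db=-A\dd_2+A_1\dd_1$ hold without inverting the linear system.
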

\begin{proof}
We work at some point $m \in M$.
First we determine $u$ from the constraints in \eqref{1-jet1} and \eqref{jet-2} which we rephrase as 
\begin{equation*}
2\tr(u \circ \ring{\J})=\dd_1 \ \mathrm{and} \ 2\tr(u \circ \ring{\J}^{\prime})=\dd_2
\end{equation*}
since $u$ is trace-free. Since we are dealing with homogeneous polynomials we may assume that $g(X,X)=1$; with respect to some 
orthonormal basis in $X^{\perp}$ let 
$$\left ( \begin{array}{lr} a & b \\ b & -a \end{array} \right ), 
\left ( \begin{array}{lr} A & B \\ B & -A \end{array} \right ) \ \mathrm{respectively} \ \left ( \begin{array}{lr} A_1 & B_1 \\ B_1 & -A_1 \end{array} \right )$$ be the matrices of the trace-free symmetric endomorphisms $u, \ring{\J}$ respectively $\ring{\J}^{\prime}$. Then 
$$ aA+bB=\frac{1}{4}\mathrm{D}_1 \ \mathrm{and} \ aA_1+bB_1=\frac{1}{4}\mathrm{D}_2
$$
hence $4da=B\dd_2-B_1\dd_1$ and $4db=-A\dd_2+A_1\dd_1$ where $d:=A_1B-AB_1$.  It follows that 
\begin{equation*}
\begin{split}
16d^2(a^2+b^2)=&(B\dd_2-B_1\dd_1)^2+(A\dd_2-A_1\dd_1)^2\\
=&(A^2+B^2)\dd_2^2-2\dd_1\dd_2(AA_1+BB_1)+(A_1^2+B_1^2)\dd_1^2.
\end{split}
\end{equation*}
The discriminant of this, regarded as an equation in $\dd_2$ reads 
\begin{equation*}
\begin{split}
&4\dd_1^2(AA_1+BB_1)^2-4(A^2+B^2)\left ( (A_1^2+B_1^2)\dd_1^2-16d^2(a^2+b^2) \right )\\
=&4d^2(-\dd_1^2+16(A^2+B^2)(a^2+b^2)).
\end{split}
\end{equation*}
It follows that 
\begin{equation} \label{pyth-1}
\begin{split}
((A^2+B^2)\dd_2-\dd_1(AA_1+BB_1))^2=&d^2(-\dd_1^2+16(A^2+B^2)(a^2+b^2))\\
=&d^2(-\dd_1^2-8(A^2+B^2)\tr(\J)).
\end{split}
\end{equation}
after also taking into account $\tr(u^2)=2(a^2+b^2)$ and $\tr(u^2)=-\tr(\J)$. The rest of the invariants are captured according to $\tr(\ring{\J} \circ \ring{\J})=2(A^2+B^2), \tr(\ring{\J}\circ \ring{\J}^{\prime})=2(AA_1+BB_1)$ and 
$\tr(\ring{\J}^{\prime} \circ \ring{\J}^{\prime})=2(A_1^2+B_1^2)$. Finally, 
$4d^2=\det (\ring{\J} \circ \ring{\J}^{\prime}-\ring{\J}^{\prime} \circ \ring{\J})$ and the claim follows.
\end{proof}
This result says that the second order derivatives of the Ricci tensor, which are essentially encoded in $D_2$, depend polynomially on its first and zero order derivatives. Since $D \neq 0$ on a dense open set in $T^{\times}M=\{X \in  TM: X \neq 0\}$ a necessary condition for solving 
\eqref{C2} is having 
$$-\mathrm{D}_1^2-4\tr(\ring{\J} \circ \ring{\J})\ric(X,X) \geq 0.$$
This is precisely of PositivStellenSatz-type as covered by Hilbert's 17-th problem, solved by E. Artin in 1927, see \cite{Art}.

In fact, our set-up contains much stronger algebraic information. Observe that we may choose a basis, possibly depending on $X$, w.r.t. which $\ring{\J}(X)$ is diagonal, that is 
$B=0$. Then \eqref{C2} becomes 
\begin{equation} \label{C3}
A^2(A\dd_2-\dd_1A_1)^2+(AB_1\dd_1)^2=-8(A^2B_1)^2\ric(X,X).
\end{equation}
In case we know that the polynomial $A$ is not identically zero, the degree in the constraint above can be lowered; this procedure will be explained in the next section, together with the fact that $B_1^2$ is in general only a rational function. Since the polynomial $-\ric(X,X)$ is equally a sum of squares, methods of real algebraic geometry can be used to solve this polynomial constraint.
\subsection{Geometry when $\Ric$ has rank $2$} \label{rk2}
In what follows we assume that $(M^3,g)$ is such that for any $(p,v) \in SM$ there exists a (possibly short time) solution $u=u(t)$ to  \eqref{main11i} satisfying \eqref{alg-m}.
Furthermore we work over the open set $U_2$ where the Ricci tensor has rank $2$; accordingly the eigenvalue structure 
of the Ricci tensor, respectively the Riemann curvature tensor read as in \eqref{K2}. 


In order to obtain 
constraints on the geometry we will compute all invariants pertaining to the Jacobi operator and its first order derivatives. 
Below we consider the following special directions in $T^{\times}M$, for $(x,y) \in \mathbb{R}^2 \backslash \{0\}$ together with 
a choice of orthonormal basis $\{w_1,w_2\}$ in $v^{\perp}$ 
\begin{itemize}
\item[($a_1$)] $X=xe_1+ye_2$ and $w_1=e_3,w_2=(x^2+y^2)^{-\frac{1}{2}}(ye_1-xe_2)$
\item[($a_2$)] $X=xe_1+ye_3$ and $w_1=e_2,w_2:=(x^2+y^2)^{-\frac{1}{2}}(ye_1-xe_3)$ 
\item[($a_3$)] $X=xe_2+ye_3$ and $w_1=e_1, w_2=(x^2+y^2)^{-\frac{1}{2}}(ye_2-xe_3)$.
\end{itemize}
Furthermore, to match the notation in the proof of Theorem \ref{obs2} we write 
$\left ( \begin{array}{lr} A & B \\ B & -A \end{array} \right )$ for the matrix of $\ring{\J}(X)$ computed with respect to the basis $\{w_1,w_2\}$.
\begin{Lem} \label{jac-f}
For the choices of directions $X \in T^{\times}M$ and orthonormal 
basis $\{w_1,w_2\}$ in $X^{\perp}$ above the trace-free Jacobi operator $\ring{\J}(X)$ is diagonal, that is $B=0$, and satisfies 
\begin{itemize}
\item[(i)] $2A=(\lambda_3-\lambda_2)x^2+\lambda_3 y^2$ in case $(a_1)$
\item[(ii)] $2A=(\lambda_2-\lambda_3)x^2+\lambda_2 y^2$ in case $(a_2)$
\item[(iii)] $-2A=\lambda_3x^2+\lambda_2y^2$ in case $(a_3)$.
\end{itemize}
\end{Lem}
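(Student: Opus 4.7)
The plan is to carry out a direct computation exploiting the fact that in each of the three cases the vector $w_1$ has been specifically chosen to be the Ricci eigenvector orthogonal to $X$. Combined with the dimension-three identity \eqref{R-p} and the eigenvalue structure \eqref{K2} on $U_2$, this will force $\J(X)w_1$ to remain a scalar multiple of $w_1$; in particular the off-diagonal entry $B=g(\ring{\J}(X)w_1,w_2)$ vanishes automatically in all three cases. The lemma therefore reduces to identifying the eigenvalue $A$ of $\ring{\J}(X)$ on $w_1$.

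For the first step I would apply \eqref{R-p} to expand
\[
\J(X)w_1=R(w_1,X)X=(\Ric w_1)\wedge X\,(X)+w_1\wedge(\Ric X)\,(X)-\tfrac{\sca}{2}\,w_1\wedge X\,(X),
\]
and plug in the Ricci eigenvalue data from \eqref{K2}. Using $g(w_1,X)=0$, the Kulkarni--Nomizu rule $Y\wedge Z(W)=g(Z,W)Y-g(Y,W)Z$, and the fact that $w_1$ is an eigenvector of $\Ric$ (so that $g(\Ric w_1,X)=0$), all three summands land in $\mathbb{R}w_1$, which already proves $B=0$. In case $(a_1)$ one has $\Ric w_1=\lambda_3 w_1$, $\Ric X=y\lambda_2 e_2$ and $\sca=\lambda_2+\lambda_3$; the analogous reductions hold in $(a_2)$ (with $w_1=e_2$ of eigenvalue $\lambda_2$) and in $(a_3)$ (with $w_1=e_1$ of eigenvalue $0$, in which case the first Kulkarni--Nomizu summand vanishes outright).

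To convert the $w_1$-eigenvalue of $\J(X)$ into that of the trace-free operator $\ring{\J}(X)$, I would use the identity $\tr \J(X)=\ric(X,X)$ already invoked in the proof of Proposition \ref{Rica-2}. A direct evaluation from the Ricci eigenstructure yields $\ric(X,X)=y^2\lambda_2$, $y^2\lambda_3$, and $x^2\lambda_2+y^2\lambda_3$ in cases $(a_1)$, $(a_2)$, $(a_3)$ respectively, and subtracting $\tfrac{1}{2}\tr \J(X)$ from the scalar multiple of $w_1$ computed above produces the three formulas for $A$ claimed in the lemma. No essential obstacle is anticipated: the three cases amount to routine algebraic manipulations, and the only subtlety is bookkeeping the correspondence between $\{w_1,w_2\}$ and the Ricci eigenframe in each case.
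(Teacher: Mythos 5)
Your proposal is correct and matches the paper's (unwritten but indicated) approach: a direct algebraic computation of $\J(X)w_1$ from the eigenvalue structure of $\Ric$, here via \eqref{R-p}, whereas the paper cites \eqref{K2}; the two are equivalent reformulations. The observation that all three Kulkarni--Nomizu summands land in $\mathbb{R}w_1$ because $w_1$ is a Ricci eigenvector orthogonal to $X$ is exactly why $B=0$, and the subsequent trace-subtraction by $\tfrac{1}{2}\ric(X,X)$ produces the stated formulas for $A$.
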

The proof is a straightforward algebraic computation based on \eqref{K2}. Observe that $A$ is, in all cases, non-identically 
zero at each point in $U_2$ since $\lambda_2<0$ and $\lambda_3<0$ therein; in addition the quadratic polynomial $t \mapsto A(t,1)$ may admit real roots only in case $(a_1)$ or $(a_2)$,
according to the sign of the function $\lambda_2-\lambda_3$. This observation may be slightly generalised in order 
to determine the directions on which the Jacobi operator is pure trace.
\begin{Pro} \label{J-alg}
Assume that $w \in T_mM$ satisfies $g(w,w)=1$ and $\ring{\J}(w)=0$. Then, up to sign, 
\begin{equation*}
w=\sqrt{\frac{\lambda_3}{\lambda_2}}e_1 \pm \sqrt{\frac{\lambda_2-\lambda_3}{\lambda_2}}e_2 \ when \ \mathrm{\lambda_2<\lambda_3} \ \mathrm{or} \ w=\sqrt{\frac{\lambda_2}{\lambda_3}}e_1 \pm \sqrt{\frac{\lambda_3-\lambda_2}{\lambda_3}}e_3 \ when \ \mathrm{\lambda_3<\lambda_2}.
\end{equation*} 
When $\lambda_2=\lambda_3$ we have $w=\pm e_1$.
\end{Pro}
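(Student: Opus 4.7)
The plan is to translate the condition $\ring{\J}(w) = 0$ into a purely linear-algebraic constraint on $\Ric$, and then analyze it by dimension counting together with the sign information $\lambda_2, \lambda_3 < 0$ provided by Lemma~\ref{vec} on $U_2$. First, using the dimension three curvature formula \eqref{R-p} one computes, for any unit $w \in T_m M$ and $X \in w^\perp$,
\[
\J(w) X = \Ric X - g(X, \Ric w)\, w + \Bigl(\ric(w,w) - \tfrac{\sca}{2}\Bigr) X.
\]
Tracing over $w^\perp$ recovers $\tr \J(w) = \ric(w,w)$, so the vanishing of the trace-free part reduces to the equivalent condition
\[
(\Ric - \nu\, \Id) X \in \mathrm{span}(w) \text{ for every } X \in w^\perp, \quad \nu := \tfrac{\sca - \ric(w,w)}{2}.
\]

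The second step is to exploit this geometrically. Since $w^\perp$ is two-dimensional and $\mathrm{span}(w)$ is one-dimensional, the restriction $(\Ric - \nu \Id)|_{w^\perp}$ has a kernel of dimension at least one, which forces $\nu$ to be an eigenvalue of $\Ric$. Hence $\nu \in \{0, \lambda_2, \lambda_3\}$. The value $\nu = 0$ forces $e_1 \in w^\perp$ together with a relation of the form $\lambda_2 c^2 + \lambda_3 b^2 = 0$ when $w = b e_2 + c e_3$; this is incompatible with $\lambda_2, \lambda_3 < 0$ and $b^2 + c^2 = 1$, so this branch is ruled out.

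The final step is the case analysis for $\nu \in \{\lambda_2, \lambda_3\}$. When $\lambda_2 \neq \lambda_3$, the identity $\nu = \lambda_3$ singles out $e_3$ as the one-dimensional kernel of $\Ric - \lambda_3 \Id$ inside $w^\perp$, forcing $c = 0$; the definition of $\nu$ then directly yields $a^2 = \lambda_3 / \lambda_2$ and $b^2 = (\lambda_2 - \lambda_3)/\lambda_2$, both in $[0,1]$ precisely when $\lambda_2 \leq \lambda_3$. A symmetric treatment of $\nu = \lambda_2$ gives the second formula, valid when $\lambda_3 \leq \lambda_2$. When $\lambda_2 = \lambda_3$ both branches collapse to $w = \pm e_1$, which is also immediate from the observation that $\Ric - \nu \Id$ must then vanish identically on $w^\perp$, identifying $w^\perp$ with the common eigenspace $\mathrm{span}(e_2, e_3)$.

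The main obstacle I anticipate is not conceptual but bookkeeping: one must verify simultaneously the image condition $(\Ric - \nu \Id)(w^\perp) \subseteq \mathrm{span}(w)$ alongside the kernel condition that singles out $\nu$, and confirm that the resulting values of $a^2, b^2, c^2$ are non-negative. Both are ultimately controlled by the strict negativity of $\lambda_2, \lambda_3$ on $U_2$, which makes the entire analysis self-consistent and matches the dichotomy stated in the proposition.
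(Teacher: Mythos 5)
Your proof is correct, and the reduction you perform is equivalent to the paper's: both rewrite $\ring{\J}(w)=0$ as the condition $(\Ric-\nu\,\Id)X\in\mathrm{span}(w)$ for all $X\in w^\perp$, with $\nu=\tfrac{1}{2}(\sca-\ric(w,w))$ (the paper's $\tfrac{\sca}{2}-\lambda$). From there the two arguments diverge. The paper invokes $\det\Ric=0$ to produce a vector $tw+x_1\in\ker\Ric$, then solves for $t,w_1,x_1$ to manufacture explicit eigenvectors $\nu w-w_1$ and $2\lambda w+w_1$ of $\Ric$, and finally matches them with $e_1$ and $e_2$ (or $e_3$) to read off the coefficients. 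You instead observe by rank--nullity that $(\Ric-\nu\,\Id)|_{w^\perp}$ cannot be injective, so $\nu$ is automatically an eigenvalue of $\Ric$; the position of the corresponding eigenspace relative to $w^\perp$ then immediately forces $c=0$ (or $b=0$), and the scalar identity $\nu=\lambda_3$ (or $\nu=\lambda_2$) hands you $a^2$. Your route is shorter and more conceptual; the paper's is more computational but has the advantage of producing all three eigenvectors of $\Ric$ explicitly in terms of $w$ and $w_1$, which is not something you need here.

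Two small remarks on precision rather than substance. First, when ruling out $\nu=0$, your relation $\lambda_2 c^2+\lambda_3 b^2=0$ with $b^2+c^2=1$ is a convex combination of two strictly negative numbers and therefore strictly negative; stating that explicitly would close the contradiction cleanly. Second, in the case $\lambda_2=\lambda_3$, the assertion that $\Ric-\nu\,\Id$ ``must vanish identically on $w^\perp$'' deserves a line: since $\mathrm{Im}(\Ric-\lambda_2\,\Id)=\mathrm{span}(e_1)$, the image condition forces either $w^\perp\subseteq\ker(\Ric-\lambda_2\,\Id)=\mathrm{span}(e_2,e_3)$ or $w\in\mathrm{span}(e_1)$, and both conclusions give $w=\pm e_1$. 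Finally, the ``bookkeeping'' you flag --- checking the image condition on the remaining basis vector of $w^\perp$ --- is indeed automatic: for $w=ae_1+be_2$, $(\Ric-\lambda_3\Id)(-be_1+ae_2)=b\lambda_3 e_1+a(\lambda_2-\lambda_3)e_2$ is proportional to $w$ precisely when $b^2\lambda_3=a^2(\lambda_2-\lambda_3)$, which your values $a^2=\lambda_3/\lambda_2$, $b^2=(\lambda_2-\lambda_3)/\lambda_2$ satisfy identically. So no genuine gap; the proof stands.
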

\begin{proof}
We need to solve the quadratic equation $R(x,v)v=\lambda x$ for all $x \in w^{\perp}$. Since $\ric(w,w)=2\lambda$ we may write 
$\Ric w=2\lambda w+w_1$ with $w_1 \in w^{\perp}$. Using \eqref{R-p} together with $\rho=\Ric-\frac{\sca}{4}\Id$ then shows that $\ring{\J}(w)=0$ is equivalent to 
\begin{equation*}
\Ric(x)=g(w_1,x)w+(\frac{\sca}{2}-\lambda)x
\end{equation*}
for all $x \in w^{\perp}$. Observe that $\frac{\sca}{2}-\lambda \neq 0$, otherwise $\ric(x,x)=0$ on $w^{\perp}$; since the non-zero 
eigenvalues of $\Ric$ are both negative on $U_2$ this would lead to $w^{\perp} \subseteq \rm span \{e_1\}$, a contradiction. Since $\det(\Ric)=0$ there exists $tw+x_1 \in \ker \Ric$, where $t \in \mathbb{R}$ and $x_1 \in w^{\perp}$. It follows that 
$$ 2t\lambda+g(w_1,x_1)=0 \ \mathrm{and} \ tw_1+(\frac{\sca}{2}-\lambda)x_1=0.
$$
Because $\frac{\sca}{2}-\lambda \neq 0$ we must have $t\neq 0$ hence 
\begin{equation} \label{nw}
2\lambda(\frac{\sca}{2}-\lambda)=g(w_1,w_1)
\end{equation}
and $(\frac{\sca}{2}-\lambda)w -w_1 \in \ker \Ric$. If $w_2 \in w^{\perp}$ is orthogonal to $w_1$ and unit length 
then $\Ric w_2=(\frac{\sca}{2}-\lambda)w_2$; by trace considerations the other non-zero eigenvalue of $\Ric$ is 
$\frac{\sca}{2}+\lambda$. Since $\Ric(w_1)=(\frac{\sca}{2}-\lambda)(2\lambda w+w_1)$ we find that 
$$ \Ric(2\lambda w+w_1)=(\frac{\sca}{2}+\lambda)(2\lambda w+w_1).
$$
There are two cases to consider now.\\
$\bullet$ When $\frac{\sca}{2}+\lambda=\lambda_2$ and thus $\frac{\sca}{2}-\lambda=\lambda_3$. Then 
$$ (\frac{\sca}{2}-\lambda)w -w_1=xe_1, \ 2\lambda w+w_1=ye_2.
$$
Since $w$ has unit length, by also using \eqref{nw} we find $x^2=(\frac{\sca}{2}-\lambda)(\frac{\sca}{2}+\lambda)=\lambda_2\lambda_3$ 
as well as $y^2=2\lambda(\frac{\sca}{2}+\lambda)=(\lambda_2-\lambda_3)\lambda_2$. Since $w$ and $w_1$ are given by $(\frac{\sca}{2}+\lambda)w=xe_1+ye_2$ and 
$(\frac{\sca}{2}+\lambda)w_1=-2\lambda xe_1+(\frac{\sca}{2}-\lambda)y e_2$ the claim follows.\\
$\bullet$ When $\frac{\sca}{2}+\lambda=\lambda_3$ and thus $\frac{\sca}{2}-\lambda=\lambda_2$. This is entirely similar and left to the reader.
\end{proof}
Thus there are, up to sign, $2$ directions in $TM$ on which the Jacobi operator is diagonal. We now take advantage of this fact to obtain 
information on the algebraic structure of $\nabla \ric$.
\begin{Lem} \label{co-3}On the open subset of $U_2$ where $\lambda_2<\lambda_3$ we have 
\begin{itemize}
\item[(i)] $\mathscr{L}_{e_2}\lambda_2=\frac{2\lambda_2\lambda_3}{\lambda_2-\lambda_3}g(\nabla_{e_1}e_1,e_2)$
\item[(ii)] $\mathscr{L}_{e_1}\lambda_2=2\lambda_2g(\nabla_{e_2}e_1,e_2)$
\item[(iii)] for directions $X=xe_1+ye_2$ with $x,y \in \mathbb{R}$ the polynomial $D_1(X)=y^3\mathrm{d}^{12}_1(\frac{x}{y})$ where 
the polynomials $\mathrm{d}_1^{12}, \mathbf{a}^{12}$ in $\mathbb{R}[t]$ read  
\begin{equation} \label{d12-u}
\mathrm{d}_1^{12}=\frac{4\lambda_2}{\lambda_2-\lambda_3}g(\nabla_{e_1}e_1,e_2)\mathbf{a}^{12} \ \mathrm{and} \ 2\mathbf{a}^{12}=(\lambda_3-\lambda_2)t^2+\lambda_3.
\end{equation} 
\end{itemize}

\end{Lem}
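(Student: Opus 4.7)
The plan is to derive all three assertions from a single computation combining the first-order Riccati obstruction with the explicit description of the vanishing directions of $\ring{\J}$. Since $\tr u = 0$, Proposition \ref{Rica-2}(i) can be rewritten as $D_1(X) = 2\tr(u(X) \circ \ring{\J}(X))$, so whenever $\ring{\J}(w) = 0$ we have $D_1(w) = 0$. Now Lemma \ref{jac-f}(i) identifies the diagonal entry $A$ of $\ring{\J}(xe_1+ye_2)$ with $\mathbf{a}_{12}(x,y) := \tfrac{1}{2}[(\lambda_3-\lambda_2)x^2+\lambda_3 y^2]$. Under $\lambda_2 < \lambda_3 < 0$ we have $\lambda_3 - \lambda_2 > 0$ and $\lambda_3 < 0$, so $\mathbf{a}_{12}$ is an indefinite binary form and splits into two distinct real linear factors. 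By homogeneity of $D_1$, the obstruction then forces the cubic $(x,y) \mapsto D_1(xe_1+ye_2)$ to vanish on both of these lines, so $\mathbf{a}_{12}$ divides $D_1(xe_1+ye_2)$ in $\mathbb{R}[x,y]$.

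Next, I would expand $D_1(xe_1+ye_2) = (\nabla_X\ric)(X,X)$ by hand. Using $\ric(\cdot, e_1) \equiv 0$, $\ric(e_2,e_2) = \lambda_2$, and $g(\nabla_{e_i}e_j, e_j) = 0$, a direct calculation gives
\begin{equation*}
D_1(xe_1+ye_2) = -2\lambda_2\, g(\nabla_{e_1}e_1,e_2)\,x^2 y + \bigl[\mathscr{L}_{e_1}\lambda_2 - 2\lambda_2\, g(\nabla_{e_2}e_1,e_2)\bigr]\,xy^2 + (\mathscr{L}_{e_2}\lambda_2)\,y^3,
\end{equation*}
the coefficient of $x^3$ vanishing automatically because $\ric(\nabla_{e_1}e_1, e_1) = 0$.

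From the previous two paragraphs, $D_1(xe_1+ye_2) = \mathbf{a}_{12} \cdot L$ for some linear $L \in \mathbb{R}[x,y]$. Since the left-hand side has no $x^3$ term while $\mathbf{a}_{12}$ has nonzero $x^2$-coefficient, $L$ must take the form $L = c\,y$ for a scalar $c$. Matching the $x^2 y$ coefficients fixes $c = \tfrac{4\lambda_2}{\lambda_2-\lambda_3}g(\nabla_{e_1}e_1,e_2)$; matching $xy^2$ then kills the bracketed coefficient above, yielding (ii); matching $y^3$ gives $\mathscr{L}_{e_2}\lambda_2 = \tfrac{c\lambda_3}{2} = \tfrac{2\lambda_2\lambda_3}{\lambda_2-\lambda_3}g(\nabla_{e_1}e_1,e_2)$, which is (i); and (iii) is the resulting identity $\mathrm{d}_1^{12} = c\,\mathbf{a}_{12}$.

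I do not foresee a serious technical obstacle: the argument is a clean application of unique factorisation in $\mathbb{R}[x,y]$ to the dynamical constraint $D_1(w) = 0$ at the two vanishing directions of $\ring{\J}$. The only point requiring care is the sign analysis $\lambda_3 - \lambda_2 > 0 > \lambda_3$, which is precisely what makes $\mathbf{a}_{12}$ reducible over $\mathbb{R}$ rather than only over $\mathbb{C}$, and hence what makes divisibility (rather than mere vanishing at a single point) available.
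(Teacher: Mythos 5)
Your argument is correct and takes essentially the same route as the paper's: both rely on the vanishing of $D_1$ at the two real null directions of $\ring{\J}$ inside $\mathrm{span}\{e_1,e_2\}$ together with the absence of an $x^3$-coefficient in $D_1(xe_1+ye_2)$. You phrase this input as divisibility of a binary cubic by the split quadratic $\mathbf{a}_{12}$ and match coefficients, whereas the paper first records the two linear relations \eqref{ric-13} obtained from those evaluations and then substitutes into the expansion of $D_1$, but the geometric content and the resulting identities are identical.
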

\begin{proof}
Consider the vector fields $X=Ae_1$ and $Y=Be_2$ where the coefficients $A=\sqrt{\frac{\lambda_3}{\lambda_2}}$ and 
$B=\sqrt{\frac{\lambda_2-\lambda_3}{\lambda_2}}$. Since $\ring{\J}(X \pm Y)=0$ by Proposition \ref{J-alg}, equation \eqref{1-jet1} forces 
\begin{equation*}
(\nabla_{X+Y}\ric)(X+Y,X+Y)=(\nabla_{X-Y}\ric)(X-Y,X-Y)=0.
\end{equation*}
After expansion taking into account $(\nabla_{U}\ric)(e_1,e_1)=0$ and that $AB \neq 0$ over the region where $\lambda_2 \neq \lambda_3$ this yields 
\begin{equation} \label{ric-13}
\begin{split}
&B^2 (\nabla_{e_2}\ric)(e_2,e_2)+2A^2(\nabla_{e_1}\ric)(e_1,e_2)=0\\
& (\nabla_{e_1}\ric)(e_2,e_2)+2(\nabla_{e_2}\ric)(e_1,e_2)=0.
\end{split}
\end{equation}
The claims in (i) and (ii) follow now by expansion taking into the eigenvalue structure of the Ricci tensor. \\
(iii) Expansion of $D_1(X)$, taking into account that $(\nabla_{U}\ric)(e_1,e_1)=0$ whenever $U \in TM$ shows that 
\begin{equation*}
\begin{split}
D_1(X)=&2x^2y (\nabla_{e_1}\ric)(e_1,e_2)+xy^2 \left ( 2(\nabla_{e_2}\ric)(e_1,e_2)+(\nabla_{e_1}\ric)(e_2,e_2)\right )\\
&+ y^3(\nabla_{e_2}\ric)(e_2,e_2).
\end{split}
\end{equation*}
Using \eqref{ric-13} we find 
$D_1(X)=2y(x^2-\frac{A^2}{B^2}y^2) (\nabla_{e_1}\ric)(e_1,e_2)=-\frac{2\lambda_2}{\lambda_3-\lambda_2}g(\nabla_{e_1}e_1,e_2)y\mathbf{a}_{12}$, as claimed.
\end{proof}

For further use we record that the differential Bianchi identity, evaluated on  the eigenframe $ \{e_1,e_2,e_3\}$ reads
\begin{equation} \label{B2-bis}
\begin{split}
&\frac{1}{2}\Li_{e_1}\sca=\lambda_2g(\nabla_{e_2}e_2,e_1)+\lambda_3g(\nabla_{e_3}e_3,e_1)\\
&\frac{1}{2}\Li_{e_2}(\lambda_3-\lambda_2)=(\lambda_3-\lambda_2)g(\nabla_{e_3}e_3,e_2)-\lambda_2g(\nabla_{e_1}e_1,e_2)\\
& \frac{1}{2}\Li_{e_3}(\lambda_3-\lambda_2)=(\lambda_3-\lambda_2) g(\nabla_{e_2}e_2,e_3)+\lambda_3g(\nabla_{e_1}e_1,e_3).
\end{split}
\end{equation}
We now start describing the second order obstructions, starting with the following 
\begin{Lem}\label{Ric111}
We have 
\begin{equation*}
\ric(\nabla_{e_1}e_1,\nabla_{e_1}e_1)=-\frac{(\la_2-\la_3)^2}{2} \ \mathrm{in} \ U_2.
\end{equation*}
\end{Lem} 
\begin{proof}
Since $\Ric(e_1)=0$ we know that $u(e_1)=0$ and hence the second order derivative $(\nabla^2_{e_1,e_1}\rm ric)(e_1,e_1)=-2\tr (J(e_1) \circ J(e_1))$ according to 
\eqref{jet-2}. As in the proof of Proposition \ref{U1} this leads to $\rm ric(\nabla_{e_1}e_1,\nabla_{e_1}e_1)=
-\tr (J(e_1) \circ J(e_1))$. The claim follows now from Proposition \ref{J2}, by a short direct computation.
\end{proof}

The last ingredient we need before being able to fully investigate the polynomial constraint in Theorem \ref{obs2}
is the explicit matrix of the derived Jacobi operator $\ring{\J}^{\prime}$.
\begin{Pro} \label{jet-12}
Let $v$ be a locally defined unit vector field in $U_2$ and let $w_1,w_2$ be a local orthonormal frame in $v^{\perp}$ w.r.t. 
which $\ring{\J}(v)$ is diagonal, with corresponding matrix $\left ( \begin{array}{lr} A & 0 \\ 0 & -A \end{array} \right )$. The matrix of 
$\ring{\J}^{\prime}$ in the frame $\{w_1,w_2\}$ is then $\left ( \begin{array}{lr} A_1 & B_1 \\ B_1 & -A_1 \end{array} \right )$
where 
\begin{equation*}
\begin{split}
&A_1=\Li_vA-\ric (v, \nabla_vv)+2g(\nabla_vv,w_1)\ric( v,w_1)\\
&B_1=2Ag(\nabla_vw_1,w_2)+g(\nabla_vv,w_1)\ric (v,w_2)+g(\nabla_vv,w_2)\ric( v,w_1).
\end{split}
\end{equation*} 
\end{Pro}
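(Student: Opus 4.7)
The plan is to compute the entries $g(\ring{\J}^{\prime}(v)w_i, w_j)$ in two independent ways and equate them. The first route is intrinsic at a point $p$: let $\gamma$ be the geodesic with $\gamma^{\prime}(0)=v(p)$ and let $W_i(t)$ be the parallel transports of $w_i(p)$ along $\gamma$. Since $\ring{\J}(X) = \J(X) - \frac{1}{2}\ric(X,X)\id_{X^{\perp}}$, differentiating $g(\ring{\J}(\gamma^{\prime})W_i, W_j)$ at $t=0$ using $\nabla_{\gamma^{\prime}}\gamma^{\prime}=0$ and $\nabla_{\gamma^{\prime}}W_i = 0$ immediately produces the intrinsic expressions
\begin{equation*}
A_1 = (\nabla_v R)(w_1, v, v, w_1) - \frac{1}{2}(\nabla_v \ric)(v, v), \qquad B_1 = (\nabla_v R)(w_1, v, v, w_2).
\end{equation*}

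The second route exploits the vector field structure. Define $B:=g(\ring{\J}(v)w_1, w_2)$; since $w_1, w_2$ are chosen to diagonalise $\ring{\J}(v)$ throughout, $B \equiv 0$, hence $\Li_v B = 0$. I would apply the product rule to $\Li_v A$ and $\Li_v B$, and use $g(\nabla_v w_1, w_1)=0$ (from $|w_1|=1$), the diagonal form of $\ring{\J}(v)$, the fact that $\ring{\J}(v)v=0$, and $g(v,w_i)=0$, to reduce the Lie-derivative expressions to quantities involving $g\bigl((\nabla_v \ring{\J}(v))w_j, w_i\bigr)$. Expanding $(\nabla_v \ring{\J}(v))w_j$ by the product rule — whereby the $\nabla_v w_j$ contributions cancel between differentiating $\ring{\J}(v)w_j$ and subtracting $\ring{\J}(v)\nabla_v w_j$ — one arrives at
\begin{equation*}
\Li_v A = A_1 + R(w_1, \nabla_v v, v, w_1) + R(w_1, v, \nabla_v v, w_1) - \ric(\nabla_v v, v),
\end{equation*}
\begin{equation*}
2A\,g(\nabla_v w_1, w_2) = B_1 + R(w_1, \nabla_v v, v, w_2) + R(w_1, v, \nabla_v v, w_2).
\end{equation*}

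The closing step is to reduce the residual curvature contractions using the dimension-three identity \eqref{R-p}; repeated application of $g(v, w_i) = 0$ and $g(\nabla_v v, v) = 0$ collapses them to
\begin{equation*}
R(w_1, \nabla_v v, v, w_1) + R(w_1, v, \nabla_v v, w_1) = 2g(\nabla_v v, w_2)\ric(v, w_2),
\end{equation*}
\begin{equation*}
R(w_1, \nabla_v v, v, w_2) + R(w_1, v, \nabla_v v, w_2) = -g(\nabla_v v, w_2)\ric(v, w_1) - g(\nabla_v v, w_1)\ric(v, w_2).
\end{equation*}
Substituting these, together with $\ric(\nabla_v v, v) = g(\nabla_v v, w_1)\ric(v, w_1) + g(\nabla_v v, w_2)\ric(v, w_2)$, into the two identities above recovers the stated formulas. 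The most delicate point is this final reduction: the several curvature terms generated by \eqref{R-p} must be carefully collapsed via the orthogonality relations before the asymmetric combinations appearing in $A_1$ and $B_1$ emerge.
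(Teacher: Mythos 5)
Your proof is correct and follows essentially the same route as the paper: both apply the Leibniz rule to the (vector-field) Jacobi data and collapse the residual curvature contractions by expanding $\nabla_v v$ and $\nabla_v w_i$ in the adapted frame and invoking the algebraic form of $R$ in dimension three (via \eqref{R-p} and the identities $R(w_1,v)w_1=-(A+\tfrac{t}{2})v-\ric(v,w_2)w_2$, $R(w_1,w_2)v=\ric(v,w_2)w_1-\ric(v,w_1)w_2$, $R(w_1,v)w_2=\ric(v,w_2)w_1$). The only cosmetic difference is organizational — the paper differentiates the vector eigenvalue equation $R(w_1,v)v=(A+\tfrac{t}{2})w_1$ and reads off components, whereas you first isolate the intrinsic (parallel-frame) expressions $A_1=(\nabla_v R)(w_1,v,v,w_1)-\tfrac{1}{2}(\nabla_v\ric)(v,v)$, $B_1=(\nabla_v R)(w_1,v,v,w_2)$ and then compare with $\Li_v A$, $\Li_v B$ — but the computation is the same.
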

\begin{proof}
Differentiating the eigenvalue equation $R(w_1,v)v=(A+\frac{t}{2})w_1$ where $t=g(\Ric v,v)$ yields 
\begin{equation*}
\begin{split}
(\nabla_vR)(w_1,v)v&=\Li_v(A+\frac{t}{2})w_1+(A+\frac{t}{2})\nabla_vw_1-R(\nabla_vw_1,v)v\\
&-R(w_1,\nabla_vv)v-R(w_1,v)\nabla_vv.
\end{split}
\end{equation*}
Now take into account that 
 $\nabla_vv \in \mathrm{span}\{w_1,w_2\}$ and $\nabla_vw_1 \in \mathrm{span}\{v,w_2\}$ . We get 
 $$ R(\nabla_vw_1,v)v=g(\nabla_vw_1,w_2)R(w_2,v)v=g(\nabla_vw_1,w_2)(-A+\frac{t}{2})w_2.
 $$
 Similarly, after expanding $\nabla_vv$ in the frame $\{w_1,w_2\}$ and re-arranging terms 
 \begin{equation*}
 \begin{split}
 R(w_1,\nabla_vv)v+R(w_1,v)\nabla_vv=&g(\nabla_vv,w_2)\left ( R(w_1,w_2)v+R(w_1,v)w_2 \right )\\
 &+g(\nabla_vv,w_1)R(w_1,v)w_1.
 \end{split}
 \end{equation*}
 Using that $\J(v)$ is diagonal w.r.t. $\{w_1,w_2\}$ and some easy trace arguments leads to 
 \begin{equation*}
\begin{split}
&R(w_1,v)w_1=-(A+\frac{t}{2})v-g(\Ric v,w_2)w_2\\
&R(w_1,w_2)v=g(\Ric v,w_2)w_1-g(\Ric v,w_1)w_2, \ R(w_1,v)w_2=\ric(v,w_2)w_1.
\end{split} 
 \end{equation*}
Gathering terms thus yields 
$$ \J^{\prime}(v)w_1=\left ( \Li_v(A+\frac{t}{2})-2g(\nabla_vv,w_2)\ric( v,w_2)\right )w_1+B_1w_2.
$$
Since $(\nabla_v\ric)(v,v)=\Li_vt-2\ric(\nabla_vv,v)$ and $\tr \ring{\J}^{\prime}(v)=(\nabla_v\ric)(v,v)$ it follows that 
$\ring{\J}^{\prime}(v)w_1=A_1w_1+B_1w_2$, where the coefficient functions $A_1,B_1$ read as stated. The claim follows by taking into 
account that $\ring{\J}^{\prime}(v): v^{\perp} \to v^{\perp} $ is symmetric and trace-free.
\end{proof}
\subsection{Expliciting the polynomial constraints} \label{Pyth}
Throughout this section we are dealing with homogeneous polynomials in $2$ variables; recall that any homogeneous 
polynomial $P\in \mathbb{R}[x,y]$ with $\deg P=m$ is determined from $P=y^mp(\frac{x}{y})$ for some $p \in \mathbb{R}[t]$ with $\deg p=m$.

We consider special directions $X$ in $TM$ according to the instances $(a_1)-(a_3)$ in the previous section and investigate the structure of the quantities featuring in \eqref{C3}.

The polynomial $A=y^2\mathbf{a}(\frac{x}{y})$ is not identically zero over $U_2$ and reads 
$$ 2\mathbf{a}=(\lambda_3-\lambda_2)t^2+\lambda_3 \ \mathrm{in \ case} \ (a_1) \ \ \mathrm{respectively} \ \ 2\mathbf{a}=(\lambda_2-\lambda_3)t^2+\lambda_2 \ \mathrm{in \ case} \ (a_2)
$$
according to Lemma \ref{jac-f}.
To analyse the properties of $B_1$, which turns out to have linear coefficients in terms the connection coefficients of the frame $\{e_1,e_2,e_3\}$, consider the triple $\mathbf{p}_{12}, \mathbf{p}_{13}, \mathbf{p}_{23}$ in $\mathbb{R}[t]$ given by 
\begin{equation*}
\begin{split}
\mathbf{p}_{12}:=&(\lambda_2-\lambda_3)g(\nabla_{e_1}e_2,e_3)t^2+
g((\lambda_2-\lambda_3)\nabla_{e_2}e_2+\lambda_3\nabla_{e_1}e_1,e_3)t+\lambda_3g(\nabla_{e_2}e_1,e_3)\\
\mathbf{p}_{13}:=&(\lambda_2-\lambda_3)g(\nabla_{e_1}e_2,e_3)t^2+g((\lambda_3-\lambda_2)\nabla_{e_3}e_3+\lambda_2\nabla_{e_1}e_1,e_2)t+
\lambda_2g(\nabla_{e_3}e_1,e_2)\\
\mathbf{p}_{23}:=&\lambda_3g(\nabla_{e_2}e_3,e_1)t^2+g(\lambda_3\nabla_{e_3}e_3-\lambda_2\nabla_{e_2}e_2,e_1)t-\lambda_2g(\nabla_{e_3}e_2,e_1).
\end{split}
\end{equation*}
Those polynomials fully determine $B_1$ as showed below.
\begin{Lem} \label{gcd1}
We have
\begin{itemize}
\item[(i)]  $B_1=\frac{1}{\sqrt{x^2+y^2}}y^4\mathbf{b}_1(\frac{x}{y})$ and $A_1=y^3\mathbf{a}_1(\frac{x}{y})$ where $\mathbf{b}_1, \mathbf{a}_1 \in \mathbb{R}[t]$ have degree at most $4$ respectively $3$
\item[(ii)] $\mathbf{b}_1=-(t^2+1)\mathbf{c}$ where 
\begin{equation} \label{c}
\mathbf{c}=\mathbf{p}_{12} \ \mathrm{in \ case} \ (a_1), \ \mathbf{c}=\mathbf{p}_{13} \ \mathrm{in \ case} \ (a_2), \ \mathbf{c}=\mathbf{p}_{23} \ \mathrm{in \ case} \ (a_3).
\end{equation}
 \end{itemize}
\end{Lem}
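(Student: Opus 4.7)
The plan is to unwind Proposition~\ref{jet-12} for each of the three choices of $X$. Since $\ring{\J}'$ is homogeneous of degree $3$ in its argument, I extend $X$ to a vector field by treating $x,y$ as constants in the local eigenframe $\{e_1,e_2,e_3\}$ and set $v=X/r$ with $r=\sqrt{x^2+y^2}$. Then the matrix entries $A_1,B_1$ of $\ring{\J}'(X)$ in the basis $\{w_1,w_2\}$ satisfy $A_1=r^3 A_1^v$ and $B_1=r^3 B_1^v$, where $A_1^v,B_1^v$ are the quantities delivered by Proposition~\ref{jet-12}.

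Part~(i) is then a direct degree count. The first basis vector $w_1$ is independent of $(x,y)$, so $A_1=g(\ring{\J}'(X)w_1,w_1)=g(\J'(X)w_1,w_1)-\frac{1}{2}(\nabla_X\ric)(X,X)$ is a homogeneous polynomial of degree $\le 3$ in $(x,y)$, which factors as $y^3\mathbf{a}_1(x/y)$. The second basis vector reads $w_2=r^{-1}(\cdots)$ with numerator linear in $(x,y)$ and appears exactly once in $B_1=g(\ring{\J}'(X)w_1,w_2)$, producing a single factor $r^{-1}$ multiplying a polynomial of degree $\le 4$; this gives $B_1=r^{-1}y^4\mathbf{b}_1(x/y)$.

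For part~(ii), the key observation is that in all three cases $w_1$ is the eigenvector of $\Ric$ orthogonal to the plane spanned by $X$, so $\ric(X,w_1)=0$. This kills the summand $g(\nabla_v v,w_2)\ric(v,w_1)$ in the expression for $B_1^v$, reducing the computation to
\[
B_1=2Ar\, g(\nabla_v w_1,w_2)+r^3 g(\nabla_v v,w_1)\,\ric(v,w_2).
\]
I would then expand each ingredient: $\nabla_v v=r^{-2}\nabla_X X$ and $\nabla_v w_1=r^{-1}\nabla_X w_1$, while $\ric(v,w_2)$ collapses to a clean monomial times $r^{-2}$ (for instance $-\lambda_2 xy/r^2$ in case~$(a_1)$) thanks to the eigenvalue structure. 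After converting every connection coefficient into the normal form featuring in the definitions of $\mathbf{p}_{12},\mathbf{p}_{13},\mathbf{p}_{23}$ via the orthonormality identity $g(\nabla_{e_i}e_3,e_j)=-g(\nabla_{e_i}e_j,e_3)$ and its analogues, the expression $rB_1$ becomes an explicit quartic in $(x,y)$ whose coefficients I would match, monomial by monomial, against the claimed product $-(x^2+y^2)\,y^2\mathbf{p}_{12}(x/y)$ in case~$(a_1)$, and analogously against $\mathbf{p}_{13},\mathbf{p}_{23}$ in the other two cases.

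The main obstacle is genuinely bookkeeping rather than conceptual: the divisibility of $rB_1$ by $r^2=x^2+y^2$ is not visible from either of the two summands in $B_1$ individually, and emerges only after the precise cancellation forced by the specific coefficients $\lambda_2,\lambda_3$ supplied by Lemma~\ref{jac-f}. No curvature identity beyond Proposition~\ref{jet-12} and basic orthonormality is required, and the three cases differ only by a cyclic permutation of the roles of $e_1,e_2,e_3$, so a single verification essentially handles all of them.
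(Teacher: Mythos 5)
Your proposal follows the same route as the paper: reduce to the unit vector $v=X/r$ via homogeneity, apply Proposition~\ref{jet-12} in the frame $\{v,w_1,w_2\}$, observe that $\ric(v,w_1)=0$ kills the third summand in $B_1$, read off the degrees for part~(i), and then verify the factorization $\mathbf{b}_1=-(t^2+1)\mathbf{c}$ by expanding in the eigenframe connection coefficients. The only minor inaccuracy is the closing remark that the three cases differ "only by a cyclic permutation of $e_1,e_2,e_3$": the passage $(a_1)\to(a_2)$ is a transposition $e_2\leftrightarrow e_3$, and more to the point the Ricci eigenvalues $(0,\lambda_2,\lambda_3)$ are not permuted along with the frame, so case $(a_3)$ (where $w_1=e_1$ lies in $\ker\Ric$) has a genuinely different eigenvalue pattern from $(a_1)$, $(a_2)$; each case does need its own short expansion, just as the paper tacitly does. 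That caveat aside, the argument is correct and matches the paper's.
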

\begin{proof}
We only prove these statements for $X=xe_1+ye_2$ as the other two cases are entirely similar. Letting 
$v:=(x^2+y^2)^{-\frac{1}{2}}X$ we have $\ring{\J}^{\prime}(X)=(x^2+y^2)^{\frac{3}{2}}\ring{\J}^{\prime}(v)$. By Lemma 
\ref{jac-f} the matrix 
of $\ring{\J}{v}=(x^2+y^2)^{-1}\ring{\J}(X)$ is $(x^2+y^2)^{-1}\left ( \begin{array}{lr} A & 0 \\ 0 & -A \end{array} \right )$ in the orthonormal 
frame $w_1=e_3, w_2=(x^2+y^2)^{-\frac{1}{2}}(ye_1-xe_2)$. Now we apply Proposition \ref{jet-12} for the frame $\{v,w_1,w_2\}$; 
since $\mathrm{ric}(v,w_1)=0$ it follows that the matrix of $\ring{\J}^{\prime}(X)$ w.r.t. $\{w_1,w_2\}$ is $\left ( \begin{array}{lr} A_1 & B_1 \\ B_1 & -A_1 \end{array} \right )$ where 
\begin{equation} \label{A1B1}
\begin{split}
A_1=&(x^2+y^2)^{\frac{3}{2}} \left (\frac{1}{x^2+y^2}\Li_vA-\ric(v,\nabla_vv) \right )=\Li_XA-\rm ric(X,\nabla_XX)
\end{split}
\end{equation}
the off-diagonal matrix coefficient
\begin{equation*}
\begin{split}
B_1=&(x^2+y^2)^{\frac{3}{2}} \left (\frac{2A}{x^2+y^2}g(\nabla_vw_1,w_2)+g(\nabla_vv,w_1)\mathrm{ric}(v,w_2) \right )=\frac{1}{\sqrt{x^2+y^2}}y^4\mathbf{b}_1(\frac{x}{y})
\end{split}
\end{equation*}
and $\mathbf{b}_1(t)=2\mathbf{a}g(\nabla_{te_1+e_2}e_3,e_1-te_2)+g(\nabla_{te_1+e_2}(te_1+e_2),e_3)\ric(te_1+e_2,e_1-te_2).$ Clearly 
$A_1$ is homogeneous of degree $3$, hence $A_1=y^3\mathbf{a}_1(\frac{x}{y})$ as stated. Since the curvature term 
$\ric(te_1+e_2,e_1-te_2)=-t\lambda_2$ the factorisation  $\mathbf{b}_1=-(t^2+1)\mathbf{p}_{12}$ follows by a purely algebraic calculation.
\end{proof}
As these will be needed later on we also bring the polynomials $\mathbf{a}_1^{13}$ and $\mathbf{a}_1^{23}$ into ready to use form.
\begin{Lem} \label{a13s}
We have 
\begin{equation*}
\begin{split}
\mathbf{a}_1^{13}=&\biggl (3\mathbf{p}_{23}^{\prime}(0)-4\lambda_3 g(e_1,\nabla_{e_3}e_3) \biggr )t^3+ \biggl (\mathbf{p}_{12}^{\prime}(0)-
3\lambda_3g(\nabla_{e_1}e_1,e_3) \biggr )t^2+\mathbf{p}_{23}^{\prime}(0)t\\
+& \mathbf{p}_{12}^{\prime}(0)-2\lambda_3 g(\nabla_{e_1}e_1,e_3)+\frac{1}{2}\rm d_1^{13}(0).
\end{split}
\end{equation*}
\end{Lem}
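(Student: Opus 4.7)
The strategy is to apply Proposition \ref{jet-12} to the direction $X = xe_1 + ye_3$ with the orthonormal frame $w_1 = e_2$, $w_2 = (x^2+y^2)^{-1/2}(ye_1 - xe_3)$, adapting the template used for case $(a_1)$ in the proof of Lemma \ref{gcd1}. The crucial preliminary observation is that $\ric(X, e_2) = 0$, since $\Ric e_2 = \lambda_2 e_2$ is $g$-orthogonal to $X$; this kills the third summand in the formula of Proposition \ref{jet-12}, so the expression for $A_1$ reduces to $A_1 = \Li_X A - \ric(X, \nabla_X X)$, in exact analogy with case $(a_1)$. Hence $\mathbf{a}_1^{13}$ is obtained by expanding $A_1$ homogeneously in $(x,y)$ and normalising by $y^{-3}$.

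From there I would compute the two contributions separately. Using $2A = (\lambda_2 - \lambda_3) x^2 + \lambda_2 y^2$ from Lemma \ref{jac-f}(ii), the Lie derivative $\Li_X A = x\Li_{e_1}A + y\Li_{e_3}A$ produces a cubic polynomial in $(x,y)$ whose coefficients involve $\Li_{e_1}\lambda_j$ and $\Li_{e_3}\lambda_j$ for $j \in \{2, 3\}$. For the second contribution, $\Ric e_1 = 0$ gives $\ric(X, \nabla_X X) = y\lambda_3\, g(e_3, \nabla_X X)$; expanding $\nabla_X X = x^2 \nabla_{e_1}e_1 + xy(\nabla_{e_1}e_3 + \nabla_{e_3}e_1) + y^2\nabla_{e_3}e_3$ and using $g(\nabla_{e_i}e_3, e_3) = 0$ yields a second explicit cubic in $(x,y)$ whose coefficients are connection symbols of the eigenframe.

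The decisive step is to reorganise the resulting cubic $\mathbf{a}_1^{13}(t) = y^{-3} A_1$ into the claimed form. The directional derivatives $\Li_{e_i}\lambda_j$ are traded for connection coefficients via the Bianchi identities \eqref{B2-bis} together with the $(e_1, e_3)$--analogue of Lemma \ref{co-3}, which one obtains by applying \eqref{1-jet1} to the two $\ring{\J}$--null directions singled out in the second case of Proposition \ref{J-alg} (the case $\lambda_3 < \lambda_2$). The remaining free directional derivative of $\ric$ is absorbed into the constant term as $\tfrac{1}{2}\rm d_1^{13}(0) = \tfrac{1}{2}(\nabla_{e_3}\ric)(e_3, e_3)$. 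What is left is the purely algebraic recognition that the four coefficients of $1, t, t^2, t^3$ line up exactly with the combinations $\mathbf{p}_{12}'(0)$, $\mathbf{p}_{23}'(0)$ and the residual terms $\lambda_3 g(\nabla_{e_1}e_1, e_3)$ and $\lambda_3 g(e_1, \nabla_{e_3}e_3)$ appearing in the statement.

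I expect the main obstacle to be purely bookkeeping rather than conceptual: each coefficient is a sum of several connection symbols weighted by eigenvalues, and the target polynomials $\mathbf{p}_{12}, \mathbf{p}_{23}$ were defined so as to mix two distinct eigenvalues in each monomial. Correctly identifying signs and eigenvalue factors in each of the four coefficients, and verifying that the unmatched residuals collapse to exactly the two stated connection symbols $g(\nabla_{e_1}e_1, e_3)$ and $g(e_1, \nabla_{e_3}e_3)$ with coefficient $\lambda_3$, is where the computation is most error-prone; it is nevertheless completely parallel in style to the $(a_1)$ computation carried out in Lemma \ref{gcd1}.
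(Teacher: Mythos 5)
Your opening steps are exactly the paper's: the direction $X = xe_1 + ye_3$ with frame $w_1 = e_2$, the observation that $\ric(X, e_2) = 0$ kills the cross-term in Proposition \ref{jet-12}, so that $A_1 = \Li_X A - \ric(X, \nabla_X X)$, and the expansion using $2A = (\lambda_2-\lambda_3)x^2 + \lambda_2 y^2$ and $\Ric e_1 = 0$. That matches equation \eqref{A1B1} applied to case $(a_2)$.

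The error is in the final substitution step. You propose to trade the Lie derivatives $\Li_{e_i}\lambda_j$ for connection coefficients via ``the $(e_1, e_3)$--analogue of Lemma \ref{co-3}, which one obtains by applying \eqref{1-jet1} to the two $\ring{\J}$--null directions singled out in the second case of Proposition \ref{J-alg} (the case $\lambda_3 < \lambda_2$)''. But Lemma \ref{a13s} is proved and used over $U_2^{+}$ where $\lambda_2 < \lambda_3$; under that sign condition the second family of null directions from Proposition \ref{J-alg}, namely $\sqrt{\lambda_2/\lambda_3}\,e_1 \pm \sqrt{(\lambda_3-\lambda_2)/\lambda_3}\,e_3$, has $(\lambda_3-\lambda_2)/\lambda_3 < 0$ and so is not real. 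There is no ``$(e_1,e_3)$-analogue'' of Lemma \ref{co-3} available in the regime that matters. The paper instead closes the argument using only Lemma \ref{co-3}(ii) (the $e_1,e_2$ identity, which \emph{is} valid on $U_2^{+}$) combined with the differential Bianchi identity \eqref{B2-bis}; for example the coefficient of $t^3$ is $\frac{1}{2}\Li_{e_1}(\lambda_2-\lambda_3) = -\frac{1}{2}\Li_{e_1}\mathrm{scal} + \Li_{e_1}\lambda_2$, with the first term controlled by the first Bianchi line and the second by Lemma \ref{co-3}(ii), yielding $-3\lambda_2 g(\nabla_{e_2}e_2,e_1) - \lambda_3 g(\nabla_{e_3}e_3,e_1) = 3\mathbf{p}_{23}'(0) - 4\lambda_3 g(e_1,\nabla_{e_3}e_3)$. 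The constant and linear coefficients are handled the same way. Your route would have to be reworked to this indirect Bianchi-plus-$\mathrm{co\mbox{-}3}(ii)$ substitution; as stated it relies on directions that do not exist on $U_2^{+}$.
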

\begin{proof}
Since $2\mathbf{a}^{13}=(\lambda_2-\lambda_3)t^2+\lambda_2$ and $\Ric e_1=0$ we have 
\begin{equation*}
\begin{split}
\mathbf{a}^{13}_1=&\frac{1}{2}\Li_{te_1+e_3}\left ((\lambda_2-\lambda_3)t^2+\lambda_2 \right )-\lambda_3 g(e_3, \nabla_{te_1+e_3}(te_1+e_3))\\
=&\frac{1}{2}\Li_{e_1}(\lambda_2-\lambda_3)t^3+\left (\frac{1}{2}\Li_{e_3}(\lambda_2-\lambda_3)-\lambda_3g(e_3,\nabla_{e_1}e_1) \right )t^2\\
+&\left (\frac{1}{2}\Li_{e_1}\lambda_2+\lambda_3g(e_1,\nabla_{e_3}e_3) \right)t+\frac{1}{2}\Li_{e_3}\lambda_2.
\end{split}
\end{equation*}
To express the coefficient of $t^3$ we use part (ii) in Lemma \ref{co-3} and \eqref{B2-bis} to check that 
\begin{equation*}
\begin{split}
\frac{1}{2}\Li_{e_1}(\lambda_2-\lambda_3)=&-\frac{1}{2}\Li_{e_1}\sca+\Li_{e_1}\lambda_2=-3\lambda_2g(\nabla_{e_2}e_2,e_1)-\lambda_3g(\nabla_{e_3}e_3,e_1)\\
&=3\mathbf{p}_{23}^{\prime}(0)-4\lambda_3 g(e_1,\nabla_{e_3}e_3).
\end{split}
\end{equation*}
 The coefficient of $t^2$ turns out to read as stated by using only \eqref{B2-bis}. For the coefficient of $t$ we have $\frac{1}{2}\Li_{e_1}\lambda_2+\lambda_3g(e_1,\nabla_{e_3}e_3)=-\lambda_2g(e_1,\nabla_{e_2}e_2)
+\lambda_3g(e_1,\nabla_{e_3}e_3)=\mathbf{p}_{23}^{\prime}(0)$, by part (ii) in Lemma \ref{co-3}. Finally 
$$\frac{1}{2}\Li_{e_3}\lambda_2=\frac{1}{2}\Li_{e_3}(\lambda_2-\lambda_3)+\frac{1}{2}\Li_{e_3}\lambda_3=\mathbf{p}_{12}^{\prime}(0)-2\lambda_3 g(\nabla_{e_1}e_1,e_3)+\frac{1}{2}\rm d_1^{13}(0)$$ by using again \eqref{B2-bis}.
\end{proof}
Similarly, we also record that 
\begin{Lem} \label{a1s}
We have 
\begin{equation*}
\begin{split}
-\mathbf{a}_1^{23}=&
\left (\frac{\lambda_2(3\lambda_3-2\lambda_2)}{\lambda_2-\lambda_3}g(\nabla_{e_1}e_1,e_2)+\mathbf{p}_{13}^{\prime}(0) \right )t^3
+\left (\frac{1}{2}\rm d_1^{13}-\frac{1}{4}\rm (d_1^{13})^{\prime \prime}-
\mathbf{p}_{12}^{\prime} \right )(0)t^2\\
+&\left (\frac{\lambda_2^2}{\lambda_2-\lambda_3}g(\nabla_{e_1}e_1,e_2)-\mathbf{p}_{13}^{\prime}(0) \right )t+\left (\frac{1}{2}\rm d_1^{13}+\frac{1}{2}\rm (d_1^{13})^{\prime \prime}+
\mathbf{p}_{12}^{\prime} \right )(0).
\end{split}
\end{equation*}
\end{Lem}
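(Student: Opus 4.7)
The plan is to follow the template used in the proof of Lemma \ref{a13s}. Specialising equation \eqref{A1B1} to case $(a_3)$ with $X=te_2+e_3$ and the value $2A=-(\lambda_3 t^2+\lambda_2)$ supplied by Lemma \ref{jac-f}(iii) yields
\[
-\mathbf{a}_1^{23}(t)=\tfrac{1}{2}\mathscr{L}_{te_2+e_3}(\lambda_3 t^2+\lambda_2)+\ric\bigl(te_2+e_3,\,\nabla_{te_2+e_3}(te_2+e_3)\bigr).
\]
The first summand expands immediately into $\tfrac{1}{2}\bigl[t^3\mathscr{L}_{e_2}\lambda_3+t^2\mathscr{L}_{e_3}\lambda_3+t\mathscr{L}_{e_2}\lambda_2+\mathscr{L}_{e_3}\lambda_2\bigr]$. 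For the second, I use that $\Ric e_1=0$ so only the $e_2$- and $e_3$-components of $\nabla_{te_2+e_3}(te_2+e_3)$ contribute, together with the skew-symmetry relation $g(\nabla_{e_i}e_j,e_k)+g(\nabla_{e_i}e_k,e_j)=0$, to rewrite it as $t(\lambda_2-\lambda_3)\bigl(t\,g(\nabla_{e_2}e_3,e_2)+g(\nabla_{e_3}e_3,e_2)\bigr)$.

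I then match coefficients by powers of $t$ against the claimed formula. For the $t^3$- and $t$-coefficients I apply Lemma \ref{co-3}(i), which expresses $\mathscr{L}_{e_2}\lambda_2$ in terms of $g(\nabla_{e_1}e_1,e_2)$, combined with the second line of \eqref{B2-bis} to handle $\mathscr{L}_{e_2}\lambda_3$; the residual terms are then read off from the identity $\mathbf{p}_{13}'(0)=(\lambda_3-\lambda_2)g(\nabla_{e_3}e_3,e_2)+\lambda_2 g(\nabla_{e_1}e_1,e_2)$ implied by the definition of $\mathbf{p}_{13}$. For the $t^2$- and constant coefficients I first compute the auxiliary polynomial $\mathrm{d}_1^{13}(t)=(\nabla_{te_1+e_3}\ric)(te_1+e_3,te_1+e_3)$; using $\ric(e_1,\cdot)=0$ a short calculation gives $\mathrm{d}_1^{13}(0)=\mathscr{L}_{e_3}\lambda_3$ and $\tfrac{1}{2}(\mathrm{d}_1^{13})''(0)=-2\lambda_3 g(\nabla_{e_1}e_1,e_3)$. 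Combining these with the third line of \eqref{B2-bis} and with $g(\nabla_{e_2}e_3,e_2)=-g(\nabla_{e_2}e_2,e_3)$ identifies the $t^2$- and constant coefficients with $(\tfrac{1}{2}\mathrm{d}_1^{13}-\tfrac{1}{4}(\mathrm{d}_1^{13})''-\mathbf{p}_{12}')(0)$ and $(\tfrac{1}{2}\mathrm{d}_1^{13}+\tfrac{1}{2}(\mathrm{d}_1^{13})''+\mathbf{p}_{12}')(0)$ respectively.

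The only substantive obstacle is this last bookkeeping step, in which one must recognise each simplified combination of the functions $\mathscr{L}_{e_k}\lambda_j$ and the connection coefficients $g(\nabla_{e_i}e_j,e_k)$ as the prescribed linear combination of $\mathbf{p}_{12}'(0)$, $\mathbf{p}_{13}'(0)$, $\mathrm{d}_1^{13}(0)$ and $(\mathrm{d}_1^{13})''(0)$. Lemma \ref{gcd1}(i) guarantees a priori that $\mathbf{a}_1^{23}$ has degree at most $3$, so the four coefficients above exhaust the identification and the formula follows.
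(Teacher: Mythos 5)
Your proposal is correct and takes essentially the same route as the paper: substitute case $(a_3)$ into \eqref{A1B1}, expand the Lie derivative and curvature terms by powers of $t$, then use Lemma \ref{co-3}(i) and the Bianchi identities \eqref{B2-bis} to recognise each coefficient as the stated combination of $\mathbf{p}_{12}'(0)$, $\mathbf{p}_{13}'(0)$, $\mathrm{d}_1^{13}(0)$, $(\mathrm{d}_1^{13})''(0)$. You merely spell out the bookkeeping that the paper compresses into its final sentence, and your intermediate identifications (e.g.\ $\mathrm{d}_1^{13}(0)=\mathscr{L}_{e_3}\lambda_3$, $\tfrac12(\mathrm{d}_1^{13})''(0)=-2\lambda_3 g(\nabla_{e_1}e_1,e_3)$, $\mathbf{p}_{13}'(0)=(\lambda_3-\lambda_2)g(\nabla_{e_3}e_3,e_2)+\lambda_2 g(\nabla_{e_1}e_1,e_2)$) all check out.
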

\begin{proof}
According to equation \eqref{A1B1} in the proof of Proposition \ref{jet-12} and since we also know that 
$-2\mathbf{a}^{23}=\lambda_3t^2+\lambda_2$ we get
$-\mathbf{a}_1^{23}=\frac{1}{2}\Li_{te_2+e_3}(\lambda_3t^2+\lambda_2)+{\rm ric}(te_2+e_3, \nabla_{te_2+e_3}(te_2+e_3))$. 
By algebraic expansion we obtain 
\begin{equation*}
\begin{split}
-\mathbf{a}_1^{23}=&(\frac{1}{2}\Li_{e_2}\lambda_3)t^3+\left (\frac{1}{2}\Li_{e_3}\lambda_3+
(\lambda_3-\lambda_2)g(\nabla_{e_2}e_2,e_3) \right )t^2\\
+&\left (\frac{1}{2}\Li_{e_2}\lambda_2+
(\lambda_2-\lambda_3)g(\nabla_{e_3}e_3,e_2) \right )t+\frac{1}{2}\Li_{e_3}\lambda_2.
\end{split}
\end{equation*}
As in the proof of Lemma \ref{a13s} this is brought into the form claimed in the statement by using Lemma \ref{co-3}, (i) and the Bianchi identities in \eqref{B2-bis}.
\end{proof}
Moving on to the rest of the quantities involved in \eqref{C3} we first consider directions $X$ of the type $(a_1)$ or $(a_2)$. 
Since $(\nabla_{e_1}\ric)(e_1,e_1)=0$ and 
$(\nabla^{2}_{e_1,e_1}\ric)(e_1,e_1)=0$ both $\dd_2=(\nabla^2_{X,X}\ric)(X,X)$ and $\dd_1=(\nabla_X\ric)(X,X)$ are multiples of $y$ hence we may write $\dd_2=y^4\mathrm{d}_2(\frac{x}{y})$ and $\dd_1=y^3\mathrm{d}_1(\frac{x}{y})$, with $\mathrm{d}_2$ and $\mathrm{d}_1$ in $\mathbb{R}[t]$ of degree $3$ respectively $2$.

\begin{Pro} \label{pol-1}
The constraint in \eqref{C3} for the choices of directions as in $(a_1)$ respectively $(a_2)$ reads 
\begin{equation} \label{P-eqn}
\mathrm{ P}^2+(t^2+1)(\mathrm{ d}_1\mathbf{c})^2=\Lambda(t^2+1)(\mathbf{a}\mathbf{c})^2
\end{equation} 
for real valued polynomials of degrees $\deg \mathrm{P} \leq 5, \deg \rm d_1 \leq 2, \deg \mathbf{c} \leq 2, \deg \mathbf{a}=2$ and where  $\rm P=\mathbf{a}\rm d_2-\mathbf{a}_1 \rm d_1$. In addition $\Lambda \in \mathbb{R}$ satisfies 
$\Lambda=-8\lambda_2$ in case $(a_1)$ and $\Lambda=-8\lambda_3$ in case $(a_2)$. 
\end{Pro}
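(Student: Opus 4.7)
The plan is to substitute the homogeneous polynomial representations of each quantity appearing in the constraint \eqref{C3} and extract a common factor. I fix a direction of type $(a_1)$, so $X=xe_1+ye_2$ with orthonormal basis $w_1=e_3$, $w_2=(x^2+y^2)^{-1/2}(ye_1-xe_2)$ in $X^\perp$; the argument for $(a_2)$ is entirely parallel. First I invoke Lemma \ref{jac-f} to write $A=y^2\mathbf{a}(x/y)$ with $2\mathbf{a}=(\lambda_3-\lambda_2)t^2+\lambda_3$, and Lemma \ref{gcd1} to write $A_1=y^3\mathbf{a}_1(x/y)$ together with the crucial factorization $B_1=(x^2+y^2)^{-1/2}\,y^4\,\mathbf{b}_1(x/y)$ where $\mathbf{b}_1=-(t^2+1)\mathbf{c}$. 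Since $\Ric e_1=0$, all contractions of $\nabla^k\ric$ with three copies of $e_1$ vanish for $k=0,1,2$, so $\dd_1=y^3\mathrm{d}_1(x/y)$ and $\dd_2=y^4\mathrm{d}_2(x/y)$ with $\deg\mathrm{d}_1\le 2$, $\deg\mathrm{d}_2\le 3$, while $\ric(X,X)=\lambda_2 y^2$.

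Next I substitute into \eqref{C3}. The combination $A\dd_2-A_1\dd_1$ becomes $y^6(\mathbf{a}\mathrm{d}_2-\mathbf{a}_1\mathrm{d}_1)=y^6\mathrm{P}$, so the first term on the left of \eqref{C3} equals $y^{16}\mathbf{a}^2\mathrm{P}^2$. Using $x^2+y^2=y^2(t^2+1)$ and $\mathbf{b}_1=-(t^2+1)\mathbf{c}$, the factor of $t^2+1$ coming from $\mathbf{b}_1$ cancels one of the two factors $t^2+1$ produced by squaring $B_1$, leaving
\begin{equation*}
(AB_1\dd_1)^2=y^{16}\mathbf{a}^2(t^2+1)(\mathrm{d}_1\mathbf{c})^2,\qquad (A^2B_1)^2=y^{14}\mathbf{a}^4(t^2+1)\mathbf{c}^2.
\end{equation*}
Plugging these and $\ric(X,X)=\lambda_2y^2$ into \eqref{C3} gives the polynomial identity
\begin{equation*}
y^{16}\mathbf{a}^2\bigl[\mathrm{P}^2+(t^2+1)(\mathrm{d}_1\mathbf{c})^2\bigr]=-8\lambda_2\,y^{16}\mathbf{a}^4(t^2+1)\mathbf{c}^2.
\end{equation*}

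Finally, since by Lemma \ref{Ric111} the subset of $U_2$ where $\lambda_2\neq\lambda_3$ is open and dense, $\mathbf{a}$ is a polynomial in $t$ of exact degree $2$ there, hence not identically zero. I may therefore divide through by the polynomial $y^{16}\mathbf{a}^2$ and obtain \eqref{P-eqn} with $\Lambda=-8\lambda_2$ in case $(a_1)$; the analogous reasoning in case $(a_2)$, using $2\mathbf{a}=(\lambda_2-\lambda_3)t^2+\lambda_2$ and $\ric(X,X)=\lambda_3 y^2$, yields $\Lambda=-8\lambda_3$. The degree bounds $\deg\mathrm{P}\le 5$, $\deg\mathrm{d}_1\le 2$, $\deg\mathbf{c}\le 2$, $\deg\mathbf{a}=2$ then follow from the individual degree bounds on $\mathbf{a},\mathbf{a}_1,\mathrm{d}_1,\mathrm{d}_2$.

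The step that requires the most care is not the algebra itself, which is mechanical, but the verification that both sides of \eqref{C3} genuinely share the common factor $\mathbf{a}^2$; this is exactly what the factorization $\mathbf{b}_1=-(t^2+1)\mathbf{c}$ from Lemma \ref{gcd1} encodes, as it precisely absorbs the rational factor $(x^2+y^2)^{-1/2}$ produced by the normalization $v=X/\|X\|$ in Proposition \ref{jet-12}. Once this observation is in place, the density argument from Lemma \ref{Ric111} legitimizes the division by $\mathbf{a}^2$ at the level of polynomial identities, and the proposition follows.
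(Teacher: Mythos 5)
Your proposal is correct and coincides with the paper's (one-sentence) proof: substitute the homogeneous representations of $A,A_1,B_1,\dd_1,\dd_2$ from Lemma \ref{jac-f} and Lemma \ref{gcd1} into \eqref{C3}, use $\mathbf{b}_1=-(t^2+1)\mathbf{c}$ together with $x^2+y^2=y^2(t^2+1)$ to absorb the square roots, and cancel the common factor $y^{16}\mathbf{a}^2$, with $\Lambda$ determined by $\ric(X,X)=\lambda_2 y^2$ (resp.\ $\lambda_3 y^2$). One small caveat worth recording: the divisibility of $\dd_2$ by $y$ is not a contraction identity for $\nabla^2\ric$ alone, since the Jacobi summand $2\tr(\mathring{\J}\circ\mathring{\J})=4A^2$ in $\dd_2$ carries an $x^4$ term; what actually makes $\dd_2(e_1)=0$ is the second-order Riccati constraint, namely $\dd_2 = 2\tr(u\circ\mathring{\J}^{\prime})$ from \eqref{jet-2} together with $u(e_1)=0$ — the paper's own text just before the proposition shares the same imprecision, so this does not affect the validity of your argument.
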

The proof simply consists in plugging into \eqref{C3} the values for the homogeneous polynomials $\mathrm{D},\dd_1,A_1,B_1$ etc. as obtained above. 
There remains to investigate the content of \eqref{C3} for directions $X=xe_2+ye_3$ which correspond to the case $(a_3)$. Then 
$y$ does not necessarily factor out from $\dd_1$ and $\dd_2$ hence 
$\dd_2=y^4{\rm d}_2(\frac{x}{y})$ and $\dd_1=y^3{\rm d}_1(\frac{x}{y})$, 
with $\rm d_2$ and $\rm d_1$ in $\mathbb{R}[t]$ of degree at most $4$ respectively $3$. Since also $\ric(X,X)=\lambda_2x^2+\lambda_3y^2$ we obtain 
\begin{Pro} \label{pol-1bis}
The constraint in \eqref{C3} for choices of directions as in $(a_3)$  reads 
\begin{equation} \label{P-eqn23}
\mathrm{P}^2+(t^2+1)(\mathrm{ d}_1\mathbf{c})^2=-8(t^2+1)(\mathbf{a}\mathbf{c})^2(\lambda_2t^2+\lambda_3)
\end{equation} 
for polynomials in $\mathbb{R}[t]$ 
of degrees $\deg \rm P \leq 6, \deg \rm d_1 \leq 3, \deg \mathbf{c} \leq 2, \deg \mathbf{a}=2$ and where  $\rm P=\mathbf{a}\rm d_2-\mathbf{a}_1 \rm d_1$.
\end{Pro}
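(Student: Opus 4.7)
The plan is to proceed in direct analogy with the one-line argument given for Proposition \ref{pol-1}: substitute the homogeneous polynomial representations of $A$, $A_1$, $B_1$, $\dd_1$, $\dd_2$ and $\ric(X,X)$ into the algebraic obstruction \eqref{C3}, and read off \eqref{P-eqn23} after cancelling a common factor. What distinguishes case $(a_3)$ from the previous two cases is that the kernel direction $e_1$ is orthogonal to $X = xe_2 + ye_3$, so neither $\dd_1$ nor $\dd_2$ is automatically divisible by $y$, and $\ric(X,X) = \lambda_2 x^2 + \lambda_3 y^2$ does not vanish at $y=0$; this is precisely what accounts for the extra factor $\lambda_2 t^2 + \lambda_3$ on the right-hand side, as opposed to the scalar constant $\Lambda$ appearing in \eqref{P-eqn}.

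\textbf{Main step.} With $t = x/y$, I would assemble the normalised representations: $A = y^2 \mathbf{a}(t)$ with $-2\mathbf{a} = \lambda_3 t^2 + \lambda_2$ (from Lemma \ref{jac-f}(iii)); $A_1 = y^3 \mathbf{a}_1(t)$ and $B_1 = (x^2+y^2)^{-1/2} y^4 \mathbf{b}_1(t)$ with $\mathbf{b}_1 = -(t^2+1)\mathbf{c}$ and $\mathbf{c} = \mathbf{p}_{23}$ (from Lemma \ref{gcd1}); and finally $\dd_2 = y^4 \mathrm{d}_2(t)$, $\dd_1 = y^3 \mathrm{d}_1(t)$ together with $\ric(X,X) = y^2(\lambda_2 t^2 + \lambda_3)$. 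The crucial algebraic simplification is the identity $t^2 + 1 = (x^2+y^2)/y^2$, which reconciles the $(x^2+y^2)^{-1/2}$ denominator inside $B_1$ with the factor $t^2+1$ in $\mathbf{b}_1$; combined with the fact that $B_1$ enters \eqref{C3} only quadratically, this ensures every ingredient contributes cleanly as a polynomial in $t$ times a power of $y$, with in particular $B_1^2 = y^6(t^2+1)\mathbf{c}^2$.

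\textbf{Assembly and degrees.} Writing $A\dd_2 - \dd_1 A_1 = y^6 \mathrm{P}$ with $\mathrm{P} := \mathbf{a} \mathrm{d}_2 - \mathbf{a}_1 \mathrm{d}_1$, a short expansion shows that the three terms of \eqref{C3} evaluate respectively to $y^{16}\mathbf{a}^2 \mathrm{P}^2$, $y^{16}(t^2+1)\mathbf{a}^2\mathbf{c}^2 \mathrm{d}_1^2$, and $y^{16}(t^2+1)\mathbf{a}^4\mathbf{c}^2(\lambda_2 t^2 + \lambda_3)$, all carrying the common factor $y^{16}\mathbf{a}^2$. Cancelling it (legitimate on $U_2$ since $\mathbf{a}$ is not identically zero there, as recorded after Lemma \ref{jac-f}) leaves precisely the identity \eqref{P-eqn23}. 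The stated degree bounds $\deg \mathrm{P} \leq 6$, $\deg \mathrm{d}_1 \leq 3$, $\deg \mathbf{c} \leq 2$, $\deg \mathbf{a} = 2$ then follow immediately from the degrees of the ingredients. The computation contains no genuine obstacle; the only point requiring care is the square root inside $B_1$, which resolves itself cleanly because $B_1$ never enters \eqref{C3} linearly.
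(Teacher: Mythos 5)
Your proposal is correct and mirrors the paper's argument, which (after the parallel Proposition \ref{pol-1}) is essentially just the observation that $\dd_2=y^4\mathrm{d}_2$, $\dd_1=y^3\mathrm{d}_1$, $\ric(X,X)=y^2(\lambda_2t^2+\lambda_3)$, followed by the same substitution into \eqref{C3} and cancellation of the common factor $y^{16}\mathbf{a}^2$. Your expansion of the three terms and the handling of the $(x^2+y^2)^{-1/2}$ factor in $B_1$ via $t^2+1=(x^2+y^2)/y^2$ are exactly the steps the paper leaves implicit; no discrepancy.
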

\subsection{Solving the polynomial constraints} \label{pol-s}
This section is devoted to solving the quadratic polynomial equations encountered previously. We begin with 
directions of type $(a_1)$ or $(a_2)$, in which case the main observation is that in most instances the polynomial $\rm d_1$ can be 
explicitly determined. 
\begin{Pro} \label{solve-1}
The solutions to \eqref{P-eqn} are 
\begin{itemize}
\item[(i)] $\mathbf{c}=0$, when $\rm P=0$
\item[(ii)] $\mathbf{c} \neq 0$ and $\mathrm{ d}_1=\pm \sqrt{\Lambda}\mathbf{a}$ when again $\rm P=0$
\item[(iii)] $\mathbf{c} \neq 0$ and $\lambda_3 < \lambda_2$ when 
$$ \pm \mathrm{ d}_1=\sqrt{-2\lambda_2}\left ( (\lambda_2-\lambda_3)t^2+2\lambda_2-\lambda_3 \right )  \ \mathrm{in\ case \ (a_1)}
$$
\item[(iv)] $\mathbf{c} \neq 0$ and $\lambda_2 < \lambda_3$ when 
$$ \pm \mathrm{ d}_1=\sqrt{-2\lambda_3}\left ((\lambda_3-\lambda_2)t^2+2\lambda_3-\lambda_2 \right ) \ \mathrm{in\ case \ (a_2)}.$$
\end{itemize}
\end{Pro}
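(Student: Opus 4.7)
The plan is to treat equation~\eqref{P-eqn} as a purely algebraic identity in $\mathbb{R}[t]$, exploiting unique factorisation together with the irreducibility of $t^2+1$. First I would rewrite \eqref{P-eqn} as
\[
\mathrm{P}^2 \;=\; (t^2+1)\mathbf{c}^2\bigl(\Lambda\mathbf{a}^2-\mathrm{d}_1^2\bigr),
\]
and dispose of the trivial case $\mathbf{c}=0$, which forces $\mathrm{P}=0$ and gives (i). Assuming $\mathbf{c}\neq 0$, from $\mathbf{c}^2\mid \mathrm{P}^2$ I get $\mathbf{c}\mid \mathrm{P}$; writing $\mathrm{P}=\mathbf{c}\mathbf{Q}$ and cancelling leaves $\mathbf{Q}^2=(t^2+1)(\Lambda\mathbf{a}^2-\mathrm{d}_1^2)$. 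The irreducibility of $t^2+1$ then forces $(t^2+1)\mid\mathbf{Q}$, so setting $\mathbf{Q}=(t^2+1)\mathbf{R}$ reduces the problem to
\[
(t^2+1)\mathbf{R}^2 \;=\; \bigl(\sqrt{\Lambda}\mathbf{a}-\mathrm{d}_1\bigr)\bigl(\sqrt{\Lambda}\mathbf{a}+\mathrm{d}_1\bigr),
\]
where $\sqrt{\Lambda}$ is real since $\Lambda>0$, a consequence of $\lambda_2,\lambda_3<0$ on $U_2$.

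Next I would branch on whether $\mathbf{R}$ vanishes. If $\mathbf{R}=0$, one of the two linear factors on the right must vanish, yielding $\mathrm{d}_1=\pm\sqrt{\Lambda}\mathbf{a}$ and recovering (ii). If $\mathbf{R}\neq 0$, irreducibility of $t^2+1$ forces it to divide exactly one of the two factors on the right; after the relabelling $\mathrm{d}_1\mapsto-\mathrm{d}_1$ if necessary (this is the source of the $\pm$ in the statement), I may assume $\sqrt{\Lambda}\mathbf{a}-\mathrm{d}_1=c(t^2+1)$ for a unique $c\in\mathbb{R}^{\times}$, since both sides have degree at most two. Substituting back and cancelling $t^2+1$ produces the identity
\[
\mathbf{R}^2 \;=\; c\bigl(2\sqrt{\Lambda}\mathbf{a}-c(t^2+1)\bigr),
\]
whose right-hand side has degree at most two, forcing $\deg\mathbf{R}\leq 1$.

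The core of the argument is then to match coefficients in this last identity. Writing $\mathbf{R}=\mu t+\nu$ and using the explicit formula $2\mathbf{a}=(\lambda_3-\lambda_2)t^2+\lambda_3$ of case~$(a_1)$, the right-hand side contains no $t$-coefficient, so $\mu\nu=0$. The sub-case $\nu=0$ forces $c=\sqrt{\Lambda}\lambda_3$ and then $\mu^2=-\Lambda\lambda_2\lambda_3=8\lambda_2^2\lambda_3$, which is negative since $\lambda_3<0$ and must be discarded. The sub-case $\mu=0$ forces $c=\sqrt{\Lambda}(\lambda_3-\lambda_2)$ and then $\nu^2=8\lambda_2^2(\lambda_2-\lambda_3)$, which is solvable precisely when $\lambda_3\leq\lambda_2$; equality yields $c=0$ and collapses back into (ii), whereas strict inequality $\lambda_3<\lambda_2$ produces (iii), the stated formula for $\mathrm{d}_1$ being obtained by substituting the computed value of $c$ into $\mathrm{d}_1=\sqrt{\Lambda}\mathbf{a}-c(t^2+1)$ together with $\Lambda=-8\lambda_2$. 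Case~$(a_2)$ is then handled by the symmetry $\lambda_2\leftrightarrow\lambda_3$ (with $\Lambda=-8\lambda_3$), yielding (iv).

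The main obstacle, as I see it, is not conceptual but rather a matter of careful book-keeping: extracting the correct divisibility chain in $\mathbb{R}[t]$, and then tracking both the sign convention implicit in the factorisation $\Lambda\mathbf{a}^2-\mathrm{d}_1^2=(\sqrt{\Lambda}\mathbf{a}-\mathrm{d}_1)(\sqrt{\Lambda}\mathbf{a}+\mathrm{d}_1)$ and the reality constraint $\mathbf{R}^2\geq 0$, so that the resulting list of solutions is both exhaustive and free of redundancy.
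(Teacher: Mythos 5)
Your proof is correct and follows essentially the same route as the paper's: factor out $\mathbf{c}$ and then $t^2+1$ from $\mathrm{P}$ using unique factorisation in $\mathbb{R}[t]$, reduce to $(t^2+1)\mathbf{R}^2 = (\sqrt{\Lambda}\mathbf{a}-\mathrm{d}_1)(\sqrt{\Lambda}\mathbf{a}+\mathrm{d}_1)$, and then compare coefficients after the irreducible factor $t^2+1$ is absorbed into one side. Your coefficient comparison (writing $\mathbf{R}=\mu t+\nu$ and reading off $\mu\nu=0$ from the absence of a linear term) is a mild streamlining of the paper's case split on $\deg v\in\{0,1\}$ with $v=x(t-\lambda)$ and the sign constraint $x_1x_2>0$, but it is the same underlying argument and reaches identical conclusions, including the sign obstruction $\mu^2=8\lambda_2^2\lambda_3<0$ that discards the degree-one possibility.
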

\begin{proof}
If $\mathbf{c}=0$ then clearly $\rm P=0$ so we assume that $\mathbf{c} \neq 0$ from now on.
If $\mathbf{c}$ admits a real root $\lambda$ then $\mathrm{P}(\lambda)=0$; if $\mathbf{c}$ admits a complex 
root $z$ then $\mathrm{P}(z)=0$ hence $\mathrm{P}(\overline{z})=0$ since $\mathrm{P}$ has real coefficients. Including the case when $\deg \mathbf{c}=0$, when 
$\mathbf{c}$ is a non-zero constant, it follows that $\mathrm{P}=\mathbf{c}q$ where 
$q \in \mathbb{R}[t]$ satisfies $q^2+(t^2+1){\rm d}_1^2=\Lambda(t^2+1)\mathbf{a}^2$. 
Then $q(\pm i)=0$ so 
$q=(t^2+1)v$ where 
$(t^2+1)v^2=\Lambda \mathbf{a}^2-\rm d_1^2=\left ( \sqrt{\Lambda}\mathbf{a}-\rm d_1\right )
\left ( \sqrt{\Lambda}\mathbf{a}+\rm d_1 \right )$. Since $\deg \mathbf{a}=2$ and $\deg \rm d_1 \leq 2$ it follows that 
$\deg v \leq 1$. 

If $v=0$ we have $\rm d_1=\pm \sqrt{\Lambda}\mathbf{a}$ which corresponds to (ii). If 
$\deg v=1$ that is $v=x(t-\lambda)$ where $x \neq 0$ we have $\mathrm{gcd}(t-\lambda, t^2+1)=1$ so 
either $$ \sqrt{\Lambda}\mathbf{a}-\mathrm{ d}_1=x_1(t-\lambda)^2, \ \sqrt{\Lambda}\mathbf{a}+\mathrm{ d}_1=x_2(t^2+1)$$ 
or 
\begin{equation} \label{prel-s}
\sqrt{\Lambda}\mathbf{a}-\mathrm {d}_1=x_1(t^2+1), \ \sqrt{\Lambda}\mathbf{a}+\mathrm{ d}_1=x_2(t-\lambda)^2
\end{equation}
where $x_1,x_2 \in \mathbb{R}$ satisfy $x_1x_2=x^2>0$. In the first case $2\sqrt{\Lambda}\mathbf{a}=x_1(t-\lambda)^2+x_2(t^2+1)$;
as granted by Lemma \ref{jac-f}, the polynomial $\mathbf{a}$ reads $2\mathbf{a}=(\lambda_3-\lambda_2)t^2+\lambda_3$ in case $(a_1)$, respectively 
$2\mathbf{a}=(\lambda_2-\lambda_3)t^2+\lambda_2$ in case $(a_2)$, which allows equating coefficients. It follows that $\lambda=0$ in both cases, and that 
$x_1=-\lambda_2\sqrt{\Lambda}>0, x_2=\lambda_3\sqrt{\Lambda}<0$ in case $(a_1)$ respectively $x_1=-\lambda_3\sqrt{\Lambda}>0, x_2=\lambda_2\sqrt{\Lambda}<0$ in case $(a_2)$. As these solutions satisfy $x_1x_2<0$ the case 
when $\deg v=1$ cannot occur. Similarly, the system in\eqref{prel-s} has no solutions with $x_1x_2>0$.

The case when $\deg v=0$, when $v$ is constant but non-zero, corresponds to 
$$ \sqrt{\Lambda}\mathbf{a}-\mathrm{ d}_1=x_1(t^2+1) , \ \sqrt{\Lambda}\mathbf{a}+\mathrm {d}_1=x_2$$ 
or 
$$\sqrt{\Lambda}\mathbf{a}-\mathrm{ d}_1=x_1 , \ \sqrt{\Lambda}\mathbf{a}+\mathrm{ d}_1=x_2(t^2+1)
$$ 
where $x_1,x_2 \in \mathbb{R}$ satisfy $x_1x_2>0$. This is solved as above with solutions corresponding to (iii) and 
(iv) in the statement. 
\end{proof}
Next we move on to solving \eqref{P-eqn23}, which is of an entirely different nature and more difficult to handle.
\begin{Pro} \label{solve-2}
Assume that $\lambda_3 >\lambda_2$. The solutions to \eqref{P-eqn23} are 
\begin{itemize}
\item[(i)] $\mathbf{c}=0$, when $\rm P=0$
\item[(ii)] $\mathbf{c} \neq 0$ and 
\begin{equation*}
\mathrm{ d}_1= \pm \sqrt{2(\lambda_3-\lambda_2)}t(\lambda_3t^2+\lambda_2) \ \mathrm{and} \ \mathrm{P}=\pm \sqrt{-2\lambda_3}\mathbf{c}(t^2+1)
(\lambda_3t^2+\lambda_2).
\end{equation*}
\end{itemize}
\end{Pro}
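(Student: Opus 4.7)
My plan is to closely parallel the strategy of Proposition \ref{solve-1}, exploiting the factorizations visible in \eqref{P-eqn23}. If $\mathbf{c}=0$ then \eqref{P-eqn23} immediately gives $\mathrm{P}=0$, yielding~(i). Assuming $\mathbf{c}\neq 0$ from now on, I reuse the root-chasing of Proposition \ref{solve-1}: every root of $\mathbf{c}$ is a root of $\mathrm{P}$ with appropriate multiplicity, so $\mathbf{c}\mid\mathrm{P}$ in $\mathbb{R}[t]$. Writing $\mathrm{P}=\mathbf{c}q$, cancelling $\mathbf{c}^2$, and then evaluating at $t=\pm i$ to extract a factor $(t^2+1)$ from $q$, I arrive at the reduced equation
\begin{equation*}
\mathrm{d}_1^2+(t^2+1)v^2=-2\alpha^2\beta,\qquad \alpha:=\lambda_3 t^2+\lambda_2,\qquad \beta:=\lambda_2 t^2+\lambda_3,
\end{equation*}
where $q=(t^2+1)v$ with $v\in\mathbb{R}[t]$ of degree at most $2$.

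The crux of the argument, and what I expect to be the main obstacle, is promoting this reduced equation to the divisibility $\alpha\mid\mathrm{d}_1$ and $\alpha\mid v$ in $\mathbb{R}[t]$. A direct quotient-ring analysis modulo $\alpha$ (irreducible since $\lambda_2/\lambda_3>0$) only yields the congruence $\mathrm{d}_1\equiv\pm\sqrt{(\lambda_2-\lambda_3)/\lambda_3}\,v\pmod{\alpha}$, not outright divisibility, so additional input is required. My first step is to evaluate the reduced equation at $t=i$: since $\alpha(i)=\lambda_2-\lambda_3$ and $\beta(i)=\lambda_3-\lambda_2$, this gives $\mathrm{d}_1(i)^2=2(\lambda_2-\lambda_3)^3<0$ using $\lambda_3>\lambda_2$. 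The condition that $\mathrm{d}_1(i)^2$ be real and negative forces $\mathrm{d}_1(i)$ to be purely imaginary, which yields $a_0=a_2$ in the expansion $\mathrm{d}_1=a_0+a_1 t+a_2 t^2+a_3 t^3$.

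Next, I decompose $\mathrm{d}_1$ and $v$ into their even and odd parts in $t$ and use that $-2\alpha^2\beta$ involves only even powers of $t$; this yields the parity identity $\mathrm{d}_1^e \mathrm{d}_1^o+(s+1)v^e v^o=0$ in $\mathbb{R}[s]$ with $s=t^2$. A short case analysis combining this identity with $a_0=a_2$, the constant and top-degree coefficient conditions $a_0^2+b_0^2=-2\lambda_2^2\lambda_3$ and $a_3^2+b_2^2=-2\lambda_2\lambda_3^2$ (both positive by the sign hypotheses), and the observation that $(t^2+1)\nmid\alpha^2\beta$ because $\alpha(i),\beta(i)\neq 0$, rules out every configuration except $\mathrm{d}_1$ odd and $v$ even. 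With parity established, I then evaluate the reduced equation at $t=i\mu$ where $\mu^2:=\lambda_2/\lambda_3>1$. Since $\alpha(i\mu)=0$ and $1-\mu^2<0$, the equation reduces to $-\mu^2(a_1-a_3\mu^2)^2+(1-\mu^2)(b_0-b_2\mu^2)^2=0$, a sum of two non-positive real numbers; both terms must vanish, forcing $a_1=\mu^2 a_3$ and $b_0=\mu^2 b_2$. These are equivalent to $\mathrm{d}_1=(a_3/\lambda_3)\,t\,\alpha$ and $v=(b_2/\lambda_3)\,\alpha$, so $\alpha\mid\mathrm{d}_1$ and $\alpha\mid v$ as required.

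Finally, writing $\mathrm{d}_1=c_1\, t\,\alpha$ and $v=c_2\,\alpha$ in the reduced equation and cancelling $\alpha^2$ reduces the problem to the affine relation $c_1^2 t^2+(t^2+1)c_2^2=-2\beta$. Matching coefficients of $t^0$ and $t^2$ yields $c_2^2=-2\lambda_3$ and $c_1^2=-2\lambda_2-c_2^2=2(\lambda_3-\lambda_2)$, both positive by the sign hypotheses. The stated formulas for $\mathrm{d}_1$ and for $\mathrm{P}=\mathbf{c}(t^2+1)v$ then follow, up to the expected sign ambiguity.
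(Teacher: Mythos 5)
Your strategy diverges genuinely from the paper's: the paper factors $\mathrm{d}_1^2 + 2(\lambda_3-\lambda_2)\alpha^2$ over $\mathbb{C}$ as $H_1H_2$ with $H_2=\overline{H_1}$, peels off the $(t^2+1)$ factor to reach $-q_+q_-=H\overline H$, and then runs a root analysis on the real quadratics $q_\pm$. You instead use the even/odd decomposition together with evaluation at $t=i$, $t=i\mu$ and $t=i/\mu$. Unfortunately the pivotal sentence --- that your listed conditions ``rule out every configuration except $\mathrm{d}_1$ odd and $v$ even'' --- is false. Take $\lambda_2=-4$, $\lambda_3=-1$, so $\alpha=-t^2-4$, $\beta=-4t^2-1$, $-2\alpha^2\beta=2(t^2+4)^2(4t^2+1)$. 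Then
\begin{equation*}
\mathrm{d}_1=\sqrt{6}\,(t^3+2t^2-2t+2),\qquad v=\sqrt{2}\,(-t^2+6t+2)
\end{equation*}
satisfy $\mathrm{d}_1^2+(t^2+1)v^2=2(t^2+4)^2(4t^2+1)$ identically, as expanding both sides confirms. Here $a_0=a_2=2\sqrt6\neq0$ and $b_1=6\sqrt2\neq0$, so $\mathrm{d}_1$ is not odd and $v$ is not even; yet every ingredient you list holds: $a_0=a_2$, the parity relations $a_0a_1+b_1b_0=0$ and $a_0a_3+b_1b_2=0$, $a_0^2+b_0^2=32=-2\lambda_2^2\lambda_3$, $a_3^2+b_2^2=8=-2\lambda_2\lambda_3^2$, and $(t^2+1)\nmid\alpha^2\beta$. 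Evaluating at $t=i\mu$ with $a_0,b_1\neq 0$ only yields $(b_0-b_2\mu^2)^2=a_0^2(\mu^2-1)$ rather than vanishing of both terms, and evaluating at $t=i/\mu$ gives $b_1^2=a_0^2(\mu^2-1)$; these are compatible with the example. So the step where you ``promote'' to $\alpha\mid\mathrm{d}_1$ and $\alpha\mid v$ is not established.

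You should be aware that this is not a defect peculiar to your route --- the paper's own argument founders at exactly the analogous point. From $H\overline H=-q_+q_-$ one can only conclude, for each root $w$ of $q_+q_-$, that $H(w)=0$ \emph{or} $H(\overline w)=0$; the paper's assertion that $H(z_+)=H(\overline z_+)=H(z_-)=H(\overline z_-)=0$ does not follow. Indeed in the example above one finds $q_+\propto t^2-2t+2$ (roots $1\pm i$) and $q_-\propto t^2+6t+10$ (roots $-3\pm i$), while $H$ has roots $\{1-i,\,-3-i\}$ and $\overline H$ has roots $\{1+i,\,-3+i\}$; thus $q_+$ and $q_-$ are not proportional and the conclusion of the proposition is bypassed, even though the auxiliary identity $(t+i)\overline H-(t-i)H=2i\sqrt{2(\lambda_3-\lambda_2)}\,\alpha$ still holds. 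Any repair of your argument would therefore need a genuinely new idea beyond what you or the paper supply; it is also worth re-examining whether the statement of the proposition itself captures all solutions of \eqref{P-eqn23}.
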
 
\begin{proof}
If $\mathbf{c}=0$ then clearly $\mathrm{P}=0$. Assuming now that $\mathbf{c} \neq 0$ we factorise $\mathrm{P}=\mathbf{c}(t^2+1)q$, as in the proof 
of Proposition \ref{solve-1} to find 
\begin{equation*} \label{so-23}
(t^2+1)q^2+\rd_1^2=-8\mathbf{a}^2 (\lambda_2t^2+\lambda_3)=-2(\lambda_3t^2+\lambda_2)^2 (\lambda_2t^2+\lambda_3)
\end{equation*}
where $\deg q \leq 2$. Observe this constraint further factorises according to 
\begin{equation} \label{fact-2}
\begin{split}
&-(t^2+1)\left (q-\sqrt{-2\lambda_2}(\lambda_3t^2+\lambda_2) \right ) \left (q+\sqrt{-2\lambda_2}(\lambda_3t^2+\lambda_2) \right )\\
&=\rd_1^2+2(\lambda_3-\lambda_2)(\lambda_3t^2+\lambda_2)^2\\
&=H_1H_2
\end{split}
\end{equation} 
where the complex valued polynomials 
$$H_1:=\rd_1-i\sqrt{2(\lambda_3-\lambda_2)}(\lambda_3t^2+\lambda_2) \ \mathrm{and} \ H_2:=\rd_1+i\sqrt{2(\lambda_3-\lambda_2)}(\lambda_3t^2+\lambda_2).
$$ 
In particular, it follows that $i$ is a root of either $H_1$ or 
of $H_2$. Assuming the former, write 
$H_1=(t-i)H$ where $H  \in \mathbb{C}[t]$ has degree at most $2$. After conjugation, $H_2=(t+i)\overline{H}$ so the constraint 
in \eqref{fact-2} becomes 
\begin{equation} \label{prod}
-\left (q-\sqrt{-2\lambda_2}(\lambda_3t^2+\lambda_2) \right ) \left (q+\sqrt{-2\lambda_2}(\lambda_3t^2+\lambda_2) \right )=H \overline{H}.
\end{equation}
Also record for further use that 
\begin{equation} \label{prod-10}
2i\sqrt{2(\lambda_3-\lambda_2)}(\lambda_3t^2+\lambda_2)=H_2-H_1=(t+i)\overline{H}-(t-i)H.
\end{equation}
The real valued polynomials $q_{\pm}:=q \pm \sqrt{-2\lambda_2}(\lambda_3t^2+\lambda_2)$ have degree at most two; since the r.h.s.
of \eqref{fact-2} is positive none of them may admit real roots, in particular $\deg q_{\pm} \in \{0,2\}$. Assuming that 
$\deg q_{+}=\deg q_{-}=2$, it follows that $\deg H=2$ as well. The roots of $q_{+}$ respectively $q_{-}$ must be thus of the form $z_{+}, \overline{z}_{+}$ respectively $z_{-}, \overline{z}_{-}$ with $z_{\pm}$ in $\mathbb{C} \backslash \mathbb{R}$. It follows that 
$Hz_{+}=H\overline{z}_{+}=Hz_{-}=H\overline{z}_{-}=0$ and since $H$ has degree $2$ we must have $z_{-}=z_{+}$ or 
$z_{-}=\overline{z}_{+}$. Equivalently $q_{+}$ and $q_{-}$ must be proportional fact which easily leads to 
$q=c\sqrt{-2\lambda_2}(\lambda_3t^2+\lambda_2)$ and $H=C(\lambda_3t^2+\lambda_2)$ where $c \in \mathbb{R}$ and $C \in \mathbb{C}$. 
According to \eqref{prod-10}, it follows that 
\begin{equation*}
2i\sqrt{2(\lambda_3-\lambda_2)}(\lambda_3t^2+\lambda_2)=
\left( \overline{C}(t+i)-C(t-i) \right )(\lambda_3t^2+\lambda_2)
\end{equation*}
whence $C=\sqrt{2(\lambda_3-\lambda_2)}$. From $-q_{+}q_{-}=H\overline{H}$ we thus get $c^2=\frac{\lambda_3}{\lambda_2}$; this produces the first set of solutions in (ii).

There remains to treat the lower degree cases, starting with $\deg q_{+}=0$ and $\deg q_{-}=2$. Then 
$q+\sqrt{-2\lambda_2}(\lambda_3t^2+\lambda_2)=c_1 \in \mathbb{R}$, we have $\deg H =1$ so we may write 
$H=\alpha t+\beta$ with $\alpha, \beta \in \mathbb{C}$. Identifying coefficients in \eqref{prod-10} shows that 
 \begin{equation*}
 \begin{split}
 \overline{\alpha}-\alpha=2i\lambda_3 \sqrt{2(\lambda_3-\lambda_2)}, \ \beta-i\alpha \in \mathbb{R}, 
 \beta+\overline{\beta}=2\lambda_2\sqrt{2(\lambda_3-\lambda_2)}.
 \end{split}
 \end{equation*}

Identifying coefficients in \eqref{prod} also shows that 
 \begin{equation*}
 \begin{split}
 &\vert \alpha \vert^2=c_1\lambda_32\sqrt{-2\lambda_2}, \ \beta \overline{\alpha}+\alpha \overline{\beta}=0\\
 &\vert \beta \vert^2=-c_1 \left (c_1-2\lambda_2 \sqrt{-2\lambda_2} \right ).
 \end{split}
 \end{equation*}  
From the first five equations above we find that 
$\alpha=-i\lambda_3 \sqrt{2(\lambda_3-\lambda_2)}, c_1=\frac{\lambda_3(\lambda_3-\lambda_2)}{\sqrt{-2\lambda_2}}$ and 
$\beta=\lambda_2 \sqrt{2(\lambda_3-\lambda_2)}$. Plugging these in the equation giving $\vert \beta \vert^2$ above leads to the additional constraint $(r-1)(r^2+4)=0$, where $r=\frac{\lambda_3}{\lambda_2}$.  As this equation has no solution $0 < r <1$ we obtain a contradiction. 

To show that the case $\deg q_{+}=2$ and $\deg q_{-}=0$ cannot occur one proceeds in an entirely similar way. Since $\deg(q_{+}-q_{-})=2$ we cannot have $\deg q_{+}=\deg q_{-}=0$ and the claim is proved under the assumption that $H_1(i)=0$. When $H_2(i)=0$ we have 
$$ -q_{+}q_{-}=H\overline{H}, \ -2i\sqrt{2(\lambda_3-\lambda_2)}(\lambda_3t^2+\lambda_2)=H_1-H_2=(t+i)\overline{H}-(t-i)H
$$
for some $H \in \mathbb{C}[t]$ with $\deg H \leq 2$. The same reasoning as above shows that the only solution is 
$H=-\sqrt{2(\lambda_3-\lambda_2)}(\lambda_3t^2+\lambda_2)$ and the claim is fully proved.
\end{proof}
When $\lambda_3 <\lambda_2$ solving \eqref{P-eqn23} directly is much more complicated; the main reason is that the left hand side of 
\eqref{fact-2} factorises only over $\mathbb{R}$ and thus has a more involved root structure. Motivated by Remark \ref{rmk1} we however observe that after the transformation  
$$ (\mathrm{ P},{\rm d}_1,\mathbf{c},\mathbf{a}) \mapsto (\tilde{\rm P}:=t^6 {\rm P}(\tfrac{1}{t}),\tilde{\rm d}_1:=t^3 {\rm d}_1(\tfrac{1}{t}), 
\tilde{\mathbf{c}}:=t^2\mathbf{c}(\tfrac{1}{t}), \tilde{\mathbf{a}}:=t^2\mathbf{a}(\tfrac{1}{t}))
$$
the constraint \eqref{P-eqn23} becomes $\tilde{\mathrm{P}}^2+(t^2+1)(\tilde{\rd}_1 \tilde{\mathbf{c}})^2=-8(t^2+1)(\tilde{\mathbf{a}}\tilde{\mathbf{c}})^2(\lambda_3t^2+\lambda_2)
$. Since $-2\widetilde{\mathbf{a}}=\lambda_2t^2+\lambda_3$ this corresponds to 
swapping $\lambda_2$ and $\lambda_3$ and hence we may apply Proposition \ref{solve-2} to solve the case $\lambda_3<\lambda_2$ as well.
\section{Differential consequences} \label{diff}
The assumptions in this section are again the same as in the statement of Theorem \ref{main-intro0}.
The approach now consists in investigating how the solutions to the polynomial equations obtained previously may overlap on open subsets 
of $U_2$. Throughout this section we work over the open region $U^{+}_2$ in $U_2$ where $\lambda_2 < \lambda_3$. For further 
use we first record the following
\begin{Lem} \label{dis}The following hold
\begin{itemize}
\item[(i)] the polynomial $\rm d_1^{23}$ satisfies 
\begin{equation} \label{d123-bis}
\begin{split}
\rd_1^{23}=& \frac{2\lambda_2\lambda_3}{\lambda_2-\lambda_3}g(\nabla_{e_1}e_1,e_2)t^3+\left (\Li_{e_3}\lambda_2-2(\lambda_3-\lambda_2)g(\nabla_{e_2}e_2,e_3) \right )t^2\\
+&\left (\Li_{e_2}\lambda_3-2(\lambda_2-\lambda_3)g(\nabla_{e_3}e_3,e_2) \right )t+\Li_{e_3}\lambda_3
\end{split}
\end{equation}
\item[(ii)] the polynomial $\rm d_1^{13}$ satisfies 
\begin{equation} \label{d114}
\rd_1^{13}=-2\lambda_3g(\nabla_{e_1}e_1,e_3)t^2+(\Li_{e_1}\lambda_3+2\lambda_3g(\nabla_{e_3}e_3,e_1))t+\Li_{e_3}\lambda_3.
\end{equation}
\end{itemize}
\end{Lem}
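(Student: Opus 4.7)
The plan is to derive both displayed identities by direct expansion of the defining polynomial $\dd_1(X)=(\nabla_X\ric)(X,X)$ along the families of directions used in cases $(a_3)$ and $(a_2)$, and then to read off $\rd_1$ via the relation $\dd_1=y^3\rd_1(x/y)$. In both cases the core tool is the Leibniz rule
\[
(\nabla_{e_i}\ric)(e_j,e_k)=\Li_{e_i}\ric(e_j,e_k)-\ric(\nabla_{e_i}e_j,e_k)-\ric(e_j,\nabla_{e_i}e_k),
\]
combined with the eigenvalue structure $\Ric e_1=0$, $\Ric e_2=\lambda_2 e_2$, $\Ric e_3=\lambda_3 e_3$ together with the orthonormality identity $g(\nabla_{e_i}e_j,e_j)=0$ and the standard dualisation $g(\nabla_{e_i}e_j,e_k)=-g(e_j,\nabla_{e_i}e_k)$. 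These reduce every $(\nabla_{e_i}\ric)(e_j,e_k)$ to an explicit combination of Lie derivatives $\Li_{e_i}\lambda_{\bullet}$ and connection coefficients $g(\nabla_{e_i}e_j,e_k)$.

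For part (ii), I would substitute $X=te_1+e_3$ and expand $\dd_1(X)$ as a polynomial in $t$ of degree at most $3$. Since $\ric(e_1,\cdot)=0$, the coefficient of $t^3$ vanishes, as does the contribution $(\nabla_{e_3}\ric)(e_1,e_1)$ entering the coefficient of $t^2$. The surviving three coefficients $2(\nabla_{e_1}\ric)(e_1,e_3)$, $2(\nabla_{e_3}\ric)(e_1,e_3)+(\nabla_{e_1}\ric)(e_3,e_3)$ and $(\nabla_{e_3}\ric)(e_3,e_3)$ are then evaluated mechanically from the Leibniz rule above and assemble directly into \eqref{d114}.

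Part (i) proceeds along the same blueprint with $X=te_2+e_3$, producing four non-zero coefficients in $t$. Three of them (the coefficients of $t^2$, $t$ and the constant term) are read off from the Leibniz expansion and match \eqref{d123-bis} after a short computation that uses only the relations recorded in the first paragraph. The only non-automatic step concerns the coefficient of $t^3$: a direct calculation gives it as $(\nabla_{e_2}\ric)(e_2,e_2)=\Li_{e_2}\lambda_2$, and to bring it to the claimed form $\frac{2\lambda_2\lambda_3}{\lambda_2-\lambda_3}g(\nabla_{e_1}e_1,e_2)$ I invoke part (i) of Lemma \ref{co-3}.

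No genuine obstacle is expected: the proof is entirely computational, and the only step that is not purely mechanical bookkeeping is the single appeal to Lemma \ref{co-3}(i) for the leading coefficient in (i). The main care will go into organising the coefficients of $t^3,t^2,t,t^0$ cleanly and keeping the signs arising from $g(\nabla_{e_i}e_j,e_k)=-g(e_j,\nabla_{e_i}e_k)$ straight when translating between $\Li_{e_i}\ric(e_j,e_k)$ and the corresponding $\Li_{e_i}\lambda_{\bullet}$.
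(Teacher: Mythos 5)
Your proposal is correct and follows essentially the same route as the paper: expand $\dd_1(X)=(\nabla_X\ric)(X,X)$ along $X=te_2+e_3$ and $X=te_1+e_3$, identify the coefficients in $t$ as sums of terms $(\nabla_{e_i}\ric)(e_j,e_k)$, evaluate these via the Leibniz rule and the eigenvalue structure of $\Ric$, and finally invoke Lemma~\ref{co-3}(i) to rewrite the leading coefficient $\Li_{e_2}\lambda_2$ in part~(i). The paper's proof is the same mechanical expansion, only written more tersely.
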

\begin{proof}
(i) follows after taking into account that, after expansion, 
\begin{equation} \label{d123}
\begin{split}
&\rd_1^{23}=(\nabla_{te_2+e_3}\mathrm{ric})(te_2+e_3,te_2+e_3)\\
=& (\nabla_{e_2}\ric)(e_2,e_2)t^3+\left ((\nabla_{e_3}\ric)(e_2,e_2)+2(\nabla_{e_2}\ric)(e_2,e_3) \right )t^2\\
&+
\left ((\nabla_{e_2}\ric)(e_3,e_3)+2(\nabla_{e_3}\ric)(e_2,e_3) \right )t+(\nabla_{e_3}\ric)(e_3,e_3).
\end{split}
\end{equation}
Since $\{e_1,e_2,e_3\}$ is an eigenframe for the Ricci tensor with corresponding eigenfunctions $0,\lambda_2,\lambda_3$ by further 
expanding $\nabla\ric$ we see that all coefficients read as stated and that $(\nabla_{e_2}\ric)(e_2,e_2)=\Li_{e_2}\lambda_2$. The claim 
follows by using Lemma \ref{co-3},(i).\\
(ii) Similarly, expanding $\rd_1^{13}=(\nabla_{te_1+e_3}\mathrm{ric})(te_1+e_3,te_1+e_3)$ leads to 
\begin{equation*} \label{d113}
\begin{split}
\rd_1^{13}=&2(\nabla_{e_1}\ric)(e_1,e_3)t^2+\left ((\nabla_{e_1}\ric)(e_3,e_3)+2(\nabla_{e_3}\ric)(e_1,e_3) \right )t+(\nabla_{e_3}\ric)(e_3,e_3)
\end{split}
\end{equation*}
and the claim follows again by using the eigenvalue structure of $\Ric$.
\end{proof}
These explicit expressions will be used in what follows to compare coefficients in $\mathrm{d}_1^{13}$ and $\mathrm{d}_1^{12}$ as in most cases these polynomials assume explicit algebraic forms granted by Proposition \ref{solve-1} and Proposition \ref{solve-2}. 
\begin{Pro} \label{diff-1}
We have $\mathbf{p}_{23}=0$ over $U_2^{+}$.
\end{Pro}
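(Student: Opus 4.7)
The proof is by contradiction. Assume that $\mathbf{p}_{23}$ does not vanish on some nonempty open $V\subseteq U_2^{+}$. Then over $V$, Proposition \ref{solve-2} forces solution (ii), giving the explicit form
$$\mathrm{d}_1^{23}=\epsilon\sqrt{2(\lambda_3-\lambda_2)}\,t(\lambda_3 t^2+\lambda_2), \qquad \epsilon\in\{\pm 1\}.$$
Matching coefficients with the general expression of $\mathrm{d}_1^{23}$ from Lemma \ref{dis}(i) yields four scalar identities; the two decisive ones are the vanishing of the constant term, $\mathscr{L}_{e_3}\lambda_3=0$, and, from the coefficient of $t^3$,
$$g(\nabla_{e_1}e_1,e_2)=\epsilon\,\frac{(\lambda_2-\lambda_3)\sqrt{2(\lambda_3-\lambda_2)}}{2\lambda_2}. \qquad(\star)$$

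I would then split on whether $\mathbf{p}_{12}$ vanishes on $V$. If $\mathbf{p}_{12}\neq 0$ on some open subset, then since $\lambda_2<\lambda_3$ on $U_2^{+}$ rules out case (iii), Proposition \ref{solve-1} forces $\mathrm{d}_1^{12}=\pm\sqrt{-8\lambda_2}\,\mathbf{a}_{12}$. Combined with the factorization $\mathrm{d}_1^{12}=\frac{4\lambda_2}{\lambda_2-\lambda_3}g(\nabla_{e_1}e_1,e_2)\,\mathbf{a}_{12}$ of Lemma \ref{co-3}(iii), this yields a second expression for $g(\nabla_{e_1}e_1,e_2)^{2}$, namely $-(\lambda_2-\lambda_3)^{2}/(2\lambda_2)$. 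Squaring $(\star)$ and equating gives $\lambda_2=\lambda_2-\lambda_3$, hence $\lambda_3=0$, contradicting $\lambda_3<0$ on $U_2$.

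In the residual sub-case $\mathbf{p}_{12}\equiv 0$ on $V$, the three coefficients of $\mathbf{p}_{12}$ read off $g(\nabla_{e_1}e_2,e_3)=g(\nabla_{e_2}e_1,e_3)=0$ together with a linear relation between $g(\nabla_{e_1}e_1,e_3)$ and $g(\nabla_{e_2}e_2,e_3)$. Turning to case $(a_2)$ and Lemma \ref{dis}(ii), the relation $\mathscr{L}_{e_3}\lambda_3=0$ forces the constant term of $\mathrm{d}_1^{13}$ to vanish; this excludes solution (ii) of Proposition \ref{solve-1} and, away from the algebraic locus $\{\lambda_2=2\lambda_3\}$, also solution (iv), so on a dense open subset $\mathbf{p}_{13}\equiv 0$. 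Feeding the resulting connection relations into the scalar identity $\ric(\nabla_{e_1}e_1,\nabla_{e_1}e_1)=-\sca^2/2$ of Lemma \ref{Ric111} and the Bianchi identities \eqref{B2-bis} produces an over-determined polynomial system in $\lambda_2,\lambda_3$ that is incompatible with $\lambda_2,\lambda_3<0$. Should the stratum $\{\lambda_2=2\lambda_3\}$ have nonempty interior in $V$, it is dispatched using the companion identity $\mathrm{P}^{23}=\epsilon\sqrt{-2\lambda_3}\,\mathbf{p}_{23}(t^{2}+1)(\lambda_3 t^{2}+\lambda_2)$ from Proposition \ref{solve-2}(ii), combined with the explicit form of $\mathbf{a}_1^{23}$ given by Lemma \ref{a1s}.

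The principal difficulty lies in the sub-case $\mathbf{p}_{12}\equiv 0$: several connection coefficients become free and the contradiction emerges only after orchestrating the constraints arising from all three families $(a_1), (a_2), (a_3)$ together with the second-order input of Lemma \ref{Ric111}. By contrast, the sub-case $\mathbf{p}_{12}\not\equiv 0$ is purely algebraic and produces the contradiction in one line from $(\star)$.
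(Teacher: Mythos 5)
Your handling of the sub-case $\mathbf{p}_{12}\neq 0$ is correct and is essentially the paper's argument, just packaged differently: the paper matches the leading coefficient of $\mathrm{d}_1^{23}$ with the free coefficient of $\mathrm{d}_1^{12}$ (these coincide via Lemma~\ref{co-3}), whereas you obtain two incompatible formulas for $g(\nabla_{e_1}e_1,e_2)^2$; both routes exploit the same identity and conclude $\lambda_3=0$. One small correction: you wrote ``$\lambda_2=\lambda_2-\lambda_3$''; the equation that actually drops out after cancelling $(\lambda_2-\lambda_3)^2$ is $-\lambda_2=\lambda_3-\lambda_2$, which of course gives the same conclusion $\lambda_3=0$.

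The genuine gap is in the sub-case $\mathbf{p}_{12}\equiv 0$. First, you dismiss the $t^2$- and $t$-coefficient identities coming from $\mathrm{d}_1^{23}$ as ``not decisive,'' but they are exactly what the paper uses: the $t^2$-condition combined with $\Li_{e_3}\lambda_3=0$, the Bianchi identity \eqref{B2-bis} and the $t$-coefficient of $\mathbf{p}_{12}=0$ forces $g(\nabla_{e_1}e_1,e_3)=g(\nabla_{e_2}e_2,e_3)=0$. This is what makes $\deg \mathrm{d}_1^{13}\leq 1$, which immediately rules out both solutions (ii) and (iv) of Proposition~\ref{solve-1} and hence yields $\mathbf{p}_{13}=0$ \emph{without} any exceptional locus. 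Your alternative derivation of $\mathbf{p}_{13}=0$ (via the vanishing of the constant term of $\mathrm{d}_1^{13}$) leaves the stratum $\{\lambda_2=2\lambda_3\}$ unhandled, and the sentence claiming it ``is dispatched'' using $\mathrm{P}^{23}$ and Lemma~\ref{a1s} is a claim, not an argument. Second, you do not actually produce a contradiction; ``an over-determined polynomial system in $\lambda_2,\lambda_3$ that is incompatible with $\lambda_2,\lambda_3<0$'' is a statement of intent. To close the case one needs $g(\nabla_{e_1}e_1,e_3)=0$ (which, again, requires the coefficients you discarded) so that Lemma~\ref{Ric111} collapses to a single cubic relation in $r=\lambda_3/\lambda_2$; the paper combines $\mathbf{p}_{13}=0$ with \eqref{33-2} to get $g(\nabla_{e_1}e_1,e_2)=-\tfrac{\varepsilon}{6\lambda_2}(\lambda_2-\lambda_3)\sqrt{2(\lambda_3-\lambda_2)}$ and then Lemma~\ref{Ric111} gives $(r-1)^3=-9(r+1)^2$, which has no root in $(0,1)$. (Using your $(\star)$ instead gives the even simpler $(r-1)^3=-(r+1)^2$, which also has no root in $(0,1)$.) Either way, the computation must actually be carried out; as written, the $\mathbf{p}_{12}\equiv 0$ half of your proof is an outline rather than a proof.
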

\begin{proof}
By combining the identities in \eqref{d12-u} and \eqref{d123-bis} we observe that the leading respectively the free coefficient in 
$\mathrm{d}_1^{23}$ respectively $\mathrm{d}_1^{12}$ are equal, that is $\frac{1}{6}(\mathrm{d}_1^{23})^{(3)}(0)=\mathrm{d}_1^{12}(0)$. 

Arguing by contradiction, assume that the open subset where $\mathbf{p}_{23} \neq 0$ is not empty. On this region the polynomial $\rm d_1^{23}$ is given by 
$\rd_1^{23}=\pm \sqrt{2(\lambda_3-\lambda_2)}t(\lambda_3t^2+\lambda_2)$, according to Proposition \ref{solve-2}; in particular 
the leading coefficient in $\mathrm{d}_1^{23}$ is $ \pm \sqrt{2(\lambda_3-\lambda_2)}\lambda_3$, which thus 
must equal the free coefficient in $\rm d_1^{12}$. As the solutions to \eqref{P-eqn} in Proposition \ref{solve-1}, (ii) have free coefficient 
$\pm \sqrt{-2\lambda_2}\lambda_3 \neq \pm \sqrt{2(\lambda_3-\lambda_2)}\lambda_3$ and those in (iii) of the same proposition require 
$\lambda_2>\lambda_3$ we may conclude that $\mathbf{p}_{12}=0$. To fully outline the information encoded in the expression for $\mathrm{d}_1^{23}$ we use \eqref{d123-bis} and also Lemma \ref{co-3},(i) to obtain 
\begin{equation} \label{sys-23}
\begin{split}
&\Li_{e_2}\lambda_2=\varepsilon \lambda_3 \sqrt{2(\lambda_3-\lambda_2)}\\
&\Li_{e_3}\lambda_2-2(\lambda_3-\lambda_2)g(\nabla_{e_2}e_2,e_3)=0\\
&\Li_{e_2}\lambda_3+2(\lambda_3-\lambda_2)g(\nabla_{e_3}e_3,e_2)=\varepsilon \lambda_2 \sqrt{2(\lambda_3-\lambda_2)}\\
&\Li_{e_3}\lambda_3=0
\end{split}
\end{equation} 
where $\varepsilon \in \{-1,1\}$. The last and second equation above ensure that the Lie derivative $\frac{1}{2}\Li_{e_3}(\lambda_3-\lambda_2)=
-(\lambda_3-\lambda_2)g(\nabla_{e_2}e_2,e_3)$; after comparison with the differential Bianchi identity \eqref{B2-bis} this yields 
$2(\lambda_3-\lambda_2)g(\nabla_{e_2}e_2,e_3)+\lambda_3g(\nabla_{e_1}e_1,e_3)=0$. Because the polynomial $\mathbf{p}_{12}$ vanishes 
identically we also have $-(\lambda_3-\lambda_2)g(\nabla_{e_2}e_2,e_3)+\lambda_3g(\nabla_{e_1}e_1,e_3)=0$, hence 
\begin{equation*}
g(\nabla_{e_2}e_2,e_3)=g(\nabla_{e_1}e_1,e_3)=0.
\end{equation*}
Similarly, combining the first and and third equations in the system \eqref{sys-23} leads to having $\frac{1}{2}\Li_{e_2}(\lambda_3-\lambda_2)=
-(\lambda_3-\lambda_2)g(\nabla_{e_3}e_3,e_2)+\frac{\varepsilon}{2}(\lambda_2-\lambda_3)\sqrt{2(\lambda_3-\lambda_2)}$. After comparison with the differential Bianchi identity \eqref{B2-bis} we find 
\begin{equation} \label{33-2}
2(\lambda_3-\lambda_2)g(\nabla_{e_3}e_3,e_2)-\lambda_2g(\nabla_{e_1}e_1,e_2)=\frac{\varepsilon}{2}(\lambda_2-\lambda_3)\sqrt{2(\lambda_3-\lambda_2)}.
\end{equation}
Since $g(\nabla_{e_1}e_1,e_3)=0$ the identity in \eqref{d114} ensures that $\deg \rm d_1^{13} \leq 1$. As the solutions to \eqref{P-eqn} listed in Proposition \ref{solve-1}, (ii) and (iv)  have degree $2$ we conclude that $\mathbf{p}_{13}=0$. 
The vanishing of $\mathbf{p}_{13}$ guarantees that, in particular,   
$(\lambda_3-\lambda_2)g(\nabla_{e_3}e_3,e_2)
+\lambda_2g(\nabla_{e_1}e_1,e_2)=0$. Thus 
\begin{equation*}
g(\nabla_{e_1}e_1,e_2)=-\frac{\varepsilon}{6\lambda_2}(\lambda_2-\lambda_3)\sqrt{2(\lambda_3-\lambda_2)}
\end{equation*}
by also using \eqref{33-2}.
Since we have just seen that $g(\nabla_{e_1}e_1,e_3)=0$, it follows that $\ric(\nabla_{e_1}e_1,\nabla_{e_1}e_1)=\frac{(\lambda_3-\lambda_2)^3}{18\lambda_2}$. Lemma \ref{Ric111} then forces 
$\la_3 = -8 \la_2$, a contradiction as $\la_2<\la_3<0$, see also Lemma \ref{vec}. This finishes showing that the polynomial $\mathbf{p}_{23}$ vanishes identically.
\end{proof}

\begin{Pro} \label{diff-2}
On the open region in $U_2$ where $\lambda_2 < \lambda_3$ we must have $\mathbf{p}_{12}\mathbf{p}_{13}=0$.
\end{Pro}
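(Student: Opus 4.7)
The strategy is proof by contradiction. Suppose that on some open subset $V \subseteq U_2^+$ both polynomials $\mathbf{p}_{12}$ and $\mathbf{p}_{13}$ are nowhere vanishing (one may freely shrink $V$). The plan is to inventory the admissible algebraic forms of $\rd_1^{12}$ and $\rd_1^{13}$ given by Proposition \ref{solve-1}, constrained by $\lambda_2 < \lambda_3$, and to match each surviving option with the hypotheses of Lemma \ref{l4} or Lemma \ref{l5}.

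In case $(a_1)$, alternatives (iii) and (iv) of Proposition \ref{solve-1} are unavailable on $U_2^+$, so only alternative (ii) remains, forcing $\varepsilon_1 \rd_1^{12} = \sqrt{-2\lambda_2}\bigl((\lambda_3-\lambda_2)t^2+\lambda_3\bigr)$. In case $(a_2)$ alternatives (ii) and (iv) both survive, and they coincide precisely with the hypotheses of Lemma \ref{l4} and Lemma \ref{l5}, respectively. After a further shrinking I may assume the chosen alternative is constant throughout $V$, which gives exactly two sub-cases to rule out. In each sub-case the relevant lemma pins down $r := \lambda_3/\lambda_2 = 3-2\sqrt{2}$, the explicit identity $\mathrm{d}\lambda_2/(\lambda_2\sqrt{-2\lambda_2}) = A\,e^2 + B\,e^3$ with constants $A,B$ depending only on $r$ and the signs $\varepsilon_i$, together with the exhaustive list of four non-identically-zero connection coefficients of the eigenframe.

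The contradiction comes from exactness. The one-form $\mathrm{d}\lambda_2/(\lambda_2\sqrt{-2\lambda_2})$ is visibly exact, so its exterior derivative vanishes: $A\,\mathrm{d}e^2 + B\,\mathrm{d}e^3 = 0$. Combining Proposition \ref{diff-1} with the connection-coefficient list from the lemma, the computation $\mathrm{d}e^i(X,Y) = -e^i([X,Y])$ collapses to $\mathrm{d}e^2 = g(\nabla_{e_2}e_2,e_3)\, e^2\wedge e^3$ and $\mathrm{d}e^3 = -g(\nabla_{e_3}e_3,e_2)\, e^2\wedge e^3$, so the integrability condition reduces to a single scalar identity in $r$. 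Substituting the values from Lemma \ref{l4}(iii) gives $(r-1)(r+1) = 0$, while Lemma \ref{l5}(iii) yields $r(3r-1) - (2r-1)(r-1) = r^2+2r-1 = 0$ which, using $r^2 = 6r-1$, reduces to $8r-2 = 22 - 16\sqrt{2}$. Neither identity holds at $r = 3-2\sqrt{2}$, and the contradiction is complete.

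The main obstacle I foresee is the combinatorial bookkeeping: matching the admissible alternatives of Proposition \ref{solve-1} on $U_2^+$ with the exact hypotheses of Lemmas \ref{l4} and \ref{l5}, and tracking which off-diagonal coefficients of the eigenframe must vanish before $\mathrm{d}e^2$ and $\mathrm{d}e^3$ can be simplified to a single wedge term each. Once this is organized, the rigidity of $r$ as the unique root of $r^2-6r+1=0$ lying in $(0,1)$ makes the final arithmetic straightforward.
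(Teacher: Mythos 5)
Your proposal is correct and follows essentially the same route as the paper: both invoke Proposition \ref{solve-1} to reduce to the two sub-cases captured by Lemma \ref{l4} and Lemma \ref{l5}, and both derive a contradiction from the incompatibility between the closed one-form $\mathrm{d}\lambda_2/(\lambda_2\sqrt{-2\lambda_2})$ and the exterior derivatives of the coframe dictated by the connection coefficients. The only (cosmetic) difference is that the paper observes that the coframe equations already make a second combination of $e^2,e^3$ closed, hence forces $\mathrm{d}e^2=\mathrm{d}e^3=0$ outright, whereas you substitute directly into the single integrability equation and evaluate the resulting scalar in $r=3-2\sqrt{2}$; these are the same computation in slightly different packaging.
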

\begin{proof}
Assume that the open subset where $\mathbf{p}_{12}\mathbf{p}_{13} \neq 0$ is not empty. Since $\lambda_2<\lambda_3$, Proposition \ref{solve-1} ensures that $\varepsilon_1 \rd_1^{12}=\sqrt{-2\lambda_2}\left ( (\lambda_3-\lambda_2)t^2+\lambda_3 \right )$ where 
$\varepsilon_1 \in \{ -1,1 \}$. Within the region under scrutiny we have $\mathbf{p}_{13} \neq 0$ so according to Proposition \ref{solve-1} the only possibilities for $\rd_1^{13}$ are 
$\varepsilon_2 \rd_1^{13}= \sqrt{-2\lambda_3} \left ( (\lambda_2-\lambda_3)t^2+\lambda_2 \right )$ with $\varepsilon_2 \in \{-1,1\}$ or $\varepsilon_3 \rd_1^{13}=\sqrt{-2\lambda_3}\left ((\lambda_3-\lambda_2)t^2+2\lambda_3-\lambda_2 \right )
$ where $\varepsilon_3 \in \{-1,1\}$ as well. Accordingly, there are two cases to distinguish. \\
{\bf{Case I:}} $\varepsilon_1 \mathrm{d}_1^{12}=\sqrt{-2\lambda_2}\left ( (\lambda_3-\lambda_2)t^2+\lambda_3 \right )$ and 
$\varepsilon_2 \mathrm{d}_1^{13}= \sqrt{-2\lambda_3} \left ( (\lambda_2-\lambda_3)t^2+\lambda_2 \right )$. \\
These requirements hold on the open set where $\rd_1^{13} \neq \pm \sqrt{-2\lambda_3}\left ((\lambda_3-\lambda_2)t^2+2\lambda_3-\lambda_2 \right )$.
The coefficient of $t^2$ in $\rm d_1^{12}$ respectively $\rm d_1^{13}$
is $-2\lambda_2 g(\nabla_{e_1}e_1,e_2)$ respectively $-2\lambda_3 g(\nabla_{e_1}e_1,e_3)$ according to \eqref{d12-u} respectively \eqref{d114}. It follows that 
\begin{equation*}
g(\nabla_{e_1}e_1,e_2)=\varepsilon_1 \frac{\lambda_3-\lambda_2}{\sqrt{-2\lambda_2}} \ \mathrm{and}  \ g(\nabla_{e_1}e_1,e_3)=\varepsilon_2 \frac{\lambda_2-\lambda_3}{\sqrt{-2\lambda_3}}.
\end{equation*}
Therefore $\ric(\nabla_{e_1}e_1,\nabla_{e_1}e_1)=-(\lambda_2-\lambda_3)^2$. Using Lemma \ref{Ric111} we see that $\la_2=\la_3$, a contradiction. This case may not occur.
$\\$
{\bf{Case II:}} $\varepsilon_1 \mathrm{ d}_1^{12}=\sqrt{-2\lambda_2}\left ( (\lambda_3-\lambda_2)t^2+\lambda_3 \right )$ and 
$\varepsilon_3 \mathrm{ d}_1^{13}=\sqrt{-2\lambda_3}\left ((\lambda_3-\lambda_2)t^2+2\lambda_3-\lambda_2 \right )
$.
An argument identical to that of Case I shows that this case may also not occur.

Summarising, we have obtained a contradiction in each of the cases above, which shows that $\mathbf{p}_{12}\mathbf{p}_{13}=0$ as claimed.
\end{proof}
\subsection{The second order derivative of the Ricci tensor} \label{2nd}

The following simple observation will be used repeatedly in this section.
\begin{Lem} \label{mid}
We have $(\rm d_1^{13})^{\prime}(0)=0$ if and only if
$$\Li_{e_1}\lambda_2=\Li_{e_1}\lambda_3=g(\nabla_{e_3}e_3,e_1)=g(\nabla_{e_2}e_2,e_1)=0.$$
\end{Lem}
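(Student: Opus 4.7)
The plan is to read off $(\mathrm{d}_1^{13})^{\prime}(0)$ directly from formula \eqref{d114} in Lemma \ref{dis}, then exploit the three ``rigid'' relations already available on $U_2^{+}$, namely the vanishing of $\mathbf{p}_{23}$ proved in Proposition \ref{diff-1}, the identity $\mathscr{L}_{e_1}\lambda_2=2\lambda_2\,g(\nabla_{e_2}e_1,e_2)$ from Lemma \ref{co-3}(ii), and the first line of the Bianchi identities \eqref{B2-bis}. The reverse implication is immediate from \eqref{d114}, so the content is the forward direction.

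For the forward direction I would first note that, since $g(\nabla_{e_2}e_2,e_2)=0$, Lemma \ref{co-3}(ii) equivalently reads $\mathscr{L}_{e_1}\lambda_2=-2\lambda_2 g(\nabla_{e_2}e_2,e_1)$, and that the vanishing of the middle coefficient of $\mathbf{p}_{23}$ gives
\begin{equation*}
\lambda_3\,g(\nabla_{e_3}e_3,e_1)=\lambda_2\,g(\nabla_{e_2}e_2,e_1).
\end{equation*}
Next I would combine Lemma \ref{co-3}(ii) with the first Bianchi identity
$\tfrac{1}{2}\mathscr{L}_{e_1}\mathrm{scal}=\lambda_2\,g(\nabla_{e_2}e_2,e_1)+\lambda_3\,g(\nabla_{e_3}e_3,e_1)$
to eliminate $\mathscr{L}_{e_1}\lambda_2$ and obtain
$\mathscr{L}_{e_1}\lambda_3 = 4\lambda_2\,g(\nabla_{e_2}e_2,e_1)+2\lambda_3\,g(\nabla_{e_3}e_3,e_1)$.

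With the hypothesis $(\mathrm{d}_1^{13})^{\prime}(0)=\mathscr{L}_{e_1}\lambda_3+2\lambda_3\,g(\nabla_{e_3}e_3,e_1)=0$ this last display reduces to
$\lambda_2\,g(\nabla_{e_2}e_2,e_1)+\lambda_3\,g(\nabla_{e_3}e_3,e_1)=0$. Together with the $\mathbf{p}_{23}=0$ relation above this is a $2\times 2$ linear system in the two coefficients $g(\nabla_{e_2}e_2,e_1)$ and $g(\nabla_{e_3}e_3,e_1)$ whose matrix has determinant $2\lambda_2\lambda_3\neq 0$ on $U_2$. Hence both coefficients vanish, and then Lemma \ref{co-3}(ii) gives $\mathscr{L}_{e_1}\lambda_2=0$ while the hypothesis itself yields $\mathscr{L}_{e_1}\lambda_3=0$.

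I expect no substantive obstacle: the whole argument is a short linear-algebraic manipulation of identities already proved, and the only thing to check carefully is the sign conversion $g(\nabla_{e_2}e_1,e_2)=-g(\nabla_{e_2}e_2,e_1)$ used to put Lemma \ref{co-3}(ii) in a form directly comparable with the coefficient of $\mathbf{p}_{23}$.
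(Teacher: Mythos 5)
Your proof is correct and follows essentially the same route as the paper's: both extract $(\mathrm{d}_1^{13})^{\prime}(0)$ from \eqref{d114}, combine it with Lemma \ref{co-3}(ii), the first Bianchi identity in \eqref{B2-bis}, and the vanishing of the middle coefficient of $\mathbf{p}_{23}$ to obtain a rank-two linear system in $g(\nabla_{e_2}e_2,e_1)$ and $g(\nabla_{e_3}e_3,e_1)$. The only cosmetic difference is that you eliminate $\mathscr{L}_{e_1}\lambda_2$ directly to isolate $\mathscr{L}_{e_1}\lambda_3$, whereas the paper phrases the same cancellation through $\mathscr{L}_{e_1}\mathrm{scal}$.
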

\begin{proof}
The requirement $\Li_{e_1}\lambda_3+2\lambda_3g(\nabla_{e_3}e_3,e_1)=0$, when combined with Lemma \ref{co-3},(ii) yields 
$\frac{1}{2}\Li_{e_1}\sca+\lambda_3g(\nabla_{e_3}e_3,e_1)+\lambda_2g(\nabla_{e_2}e_2,e_1)=0$. After comparison with the Bianchi 
identity 
\eqref{B2-bis} it follows that $\frac{1}{2}\Li_{e_1}\sca=\lambda_3g(\nabla_{e_3}e_3,e_1)+\lambda_2g(\nabla_{e_2}e_2,e_1)=0$. As we also know that $\mathbf{p}_{23}=0$ the claim follows.
\end{proof}

The main idea to generate additional obstructions on the connection coefficients is to take further advantage of the fact that 
$\mathbf{p}_{23}=0$. Those involve a priori second order derivatives of the Ricci tensor since having $\rm P^{23}=0$ reads 
$$ \rm {\mathbf{a}}^{23}d_2^{23}-\mathbf{a}_1^{23}\rm d_1^{23}=0
$$
by Proposition \ref{pol-1}. This contains however first order algebraic information; since the roots of the quadratic polynomial 
$\rm {\mathbf{a}}^{23}$ are $\pm i\sqrt{\frac{\lambda_2}{\lambda_3}}$ it follows that 
\begin{equation*}
\rm d_1^{23}\left ({\it i}\sqrt{\frac{\lambda_2}{\lambda_3}} \right )=0 \ \mathrm{or} \ \mathbf{a}_1^{23}\left ({\it i}\sqrt{\frac{\lambda_2}{\lambda_3}} \right )=0.
\end{equation*}
In the rest of this section we will explore the geometric content of each of the those root equations, more precisely their impact on the structure of the connection coefficients of the Ricci eigenframe $\{e_1,e_2,e_3\}$.  
The following general identities essentially relate the coefficients of 
$\rm d_1^{23}$ to those of $\rm d_1^{13}$ respectively $\mathbf{a}_1^{23}$ in a ready to use way.
\begin{eqnarray}\label{Re}
\begin{split}
\rm Re \ \rm d_1^{23}\left ({\it i}\sqrt{\frac{\lambda_2}{\lambda_3}} \right )=&\frac{\lambda_3-\lambda_2}{\lambda_3}
\left (\rm d_1^{13}(0)+\frac{3\lambda_2}{\lambda_2-\lambda_3}\frac{(\rm d_1^{13})^{\prime \prime}(0)}{2} \right )
-\frac{4\lambda_2}{\lambda_3}\mathbf{p}_{12}^{\prime}(0) \\
\rm Im \ \rm d_1^{23}\left ({\it i}\sqrt{\frac{\lambda_2}{\lambda_3}}\right )=&4\left (\mathbf{p}_{13}^{\prime}(0)-2\lambda_2g(\nabla_{e_1}e_1,e_2)\right )\sqrt{\frac{\lambda_2}{\lambda_3}}. 
\end{split}
\end{eqnarray}
The proof is entirely analogous to that of the Lemma \ref{a13s} and Lemma \ref{a1s}; it consists of bringing to final form the expression for 
$\rm d_1^{23}$ in \eqref{d123-bis} by taking into account Lemma \ref{co-3},(ii) together with the Bianchi identity \eqref{B2-bis}.
\begin{Lem}\label{2d1} On any open subset of $U_2^{+}$ where $\rm d_1^{23}\left ({\it i}\sqrt{\frac{\lambda_2}{\lambda_3}} \right )=0$ we 
have $\mathbf{p}_{13}=0$.
\end{Lem}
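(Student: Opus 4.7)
The plan is to argue by contradiction, exploiting the rigid algebraic form of $\mathrm{d}_1^{13}$ supplied by Proposition \ref{solve-1} together with the vanishing result of Proposition \ref{diff-2}. Suppose, for contradiction, that on some open subset $V$ of $U_2^{+}$ where $\mathrm{d}_1^{23}(i\sqrt{\lambda_2/\lambda_3})=0$ the polynomial $\mathbf{p}_{13}$ does not vanish identically. Shrinking $V$ if necessary, I may assume $\mathbf{p}_{13}\neq 0$ pointwise on $V$.

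Since $V\subseteq U_2^{+}$, Proposition \ref{diff-2} applies and, combined with $\mathbf{p}_{13}\neq 0$, forces $\mathbf{p}_{12}\equiv 0$ on $V$; in particular $\mathbf{p}_{12}^{\prime}(0)=0$. Now I read the real part of the hypothesis using the first line of \eqref{Re}. Since $\lambda_3-\lambda_2\neq 0$ on $U_2^{+}$ and the $\mathbf{p}_{12}^{\prime}(0)$ term drops out, the hypothesis collapses to the purely algebraic identity
\begin{equation*}
\mathrm{d}_1^{13}(0)+\frac{3\lambda_2}{\lambda_2-\lambda_3}\,\frac{(\mathrm{d}_1^{13})^{\prime\prime}(0)}{2}=0. \qquad (\star)
\end{equation*}

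Next, I invoke Proposition \ref{solve-1} in case $(a_2)$, where $\Lambda=-8\lambda_3$. Because $\mathbf{p}_{13}\neq 0$ and $\lambda_2<\lambda_3$, only items (ii) and (iv) of that proposition can occur, so
\begin{equation*}
\mathrm{d}_1^{13}=\pm\sqrt{-2\lambda_3}\bigl((\lambda_2-\lambda_3)t^2+\lambda_2\bigr)\quad\text{or}\quad\mathrm{d}_1^{13}=\pm\sqrt{-2\lambda_3}\bigl((\lambda_3-\lambda_2)t^2+2\lambda_3-\lambda_2\bigr).
\end{equation*}
Substituting each into $(\star)$ is then a short computation. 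The first form gives $\pm 4\lambda_2\sqrt{-2\lambda_3}$, which is non-zero since $\lambda_2\lambda_3\neq 0$ on $U_2$. The second form gives $\pm 2\sqrt{-2\lambda_3}\,(\lambda_3-2\lambda_2)$, whose vanishing would force $\lambda_3=2\lambda_2$; this is incompatible with $\lambda_2<\lambda_3<0$ (recall eigenvalues of $\Ric$ are non-positive by Lemma \ref{vec}, and non-zero on $U_2$). Both possibilities contradict $(\star)$, so the contradiction assumption fails and $\mathbf{p}_{13}=0$.

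The main obstacle is really bookkeeping rather than a genuine geometric difficulty: one must correctly eliminate the $\mathbf{p}_{12}^{\prime}(0)$ contribution in \eqref{Re} via Proposition \ref{diff-2} before Proposition \ref{solve-1} becomes usable. Note that the imaginary part of \eqref{Re} is not needed here; it produces the auxiliary identity $(\lambda_3-\lambda_2)g(\nabla_{e_3}e_3,e_2)=\lambda_2\,g(\nabla_{e_1}e_1,e_2)$, which will presumably feed into the subsequent analysis of connection coefficients rather than into the present lemma.
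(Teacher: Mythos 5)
Your proof is correct and follows essentially the same route as the paper: contradiction, invoke Proposition \ref{diff-2} to kill $\mathbf{p}_{12}^{\prime}(0)$ in the real-part identity from \eqref{Re}, and then check the two admissible forms of $\mathrm{d}_1^{13}$ from Proposition \ref{solve-1}(ii),(iv) against the resulting constraint. The only cosmetic difference is that the paper first notes $(\mathrm{d}_1^{13})^{\prime}(0)=0$ so as to phrase your $(\star)$ as $\mathrm{d}_1^{13}\bigl(\sqrt{3\lambda_2/(\lambda_2-\lambda_3)}\bigr)=0$, whereas you substitute the two closed forms into $(\star)$ directly; the arithmetic is the same.
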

\begin{proof}
Arguing by contradiction we assume that the open set where $\mathbf{p}_{13} \neq 0$ is not empty; according to Proposition \ref{diff-2} on this region we must have $\mathbf{p}_{12}=0$. We continue by analysing the possible algebraic occurences for $\rd_1^{13}$; recall those are listed in Proposition \ref{solve-1}, (ii) and (iv) and correspond to $\rd_1^{13}= \pm \sqrt{-2\lambda_3} \left ( (\lambda_2-\lambda_3)t^2+\lambda_2 \right )$ or $\rd_1^{13}=\pm \sqrt{-2\lambda_3}\left ((\lambda_3-\lambda_2)t^2+2\lambda_3-\lambda_2 \right )$.
Since in both cases $(\rd_1^{13})^{\prime}(0)=0$, the first equation in \eqref{Re} shows that requiring $i\sqrt{\frac{\lambda_2}{\lambda_3}}$ be a root of $\rd_1^{23}$ implies 
$\rd_1^{13}\left (\sqrt{\frac{3\lambda_2}{\lambda_2-\lambda_3}}\right )=0$. This corresponds to $\lambda_2=0$ respectively $\lambda_3=2\lambda_2$ for the first respectively the second instances in $\rm d_1^{13}$. None of these may occur over $U_2^{+}$, where $\lambda_2<\lambda_3<0$. We have thus obtained a contradiction hence the set where $\mathbf{p}_{13}\neq 0$ is empty and the claim is proved.
\end{proof}
These preparations allow to reduce to the study of specific roots of the cubic polynomial 
$\mathbf{a}_1^{23}$. To proceed in that direction record the general identity 
\begin{equation} \label{Re2}
2\rm Re \ \mathbf{a}_1^{13}\left ({\it i}\sqrt{\frac{\lambda_2}{\lambda_2-\lambda_3}} \right)=\frac{2\lambda_3}{\lambda_3-\lambda_2}\mathbf{p}_{12}^{\prime}(0)-\frac{\lambda_2+2\lambda_3}{\lambda_2-\lambda_3}\frac{(\rm d_1^{13})^{\prime \prime}(0)}{2}+\rm d_1^{13}(0)
\end{equation}
which follows from Lemma \ref{a13s}.
\begin{Pro}{\red{\label{2d1p}}} We have $\mathbf{a}_1^{23}\left (i\sqrt{\frac{\lambda_2}{\lambda_3}}\right )=0$ on $U_2^{+}$.
\end{Pro}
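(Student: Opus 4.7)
The plan is to argue by contradiction: suppose there is a nonempty open set $V\subseteq U_2^{+}$ on which $\mathbf{a}_1^{23}(i\sqrt{\lambda_2/\lambda_3})\neq 0$, and derive a contradiction. Since $\mathbf{p}_{23}=0$ on $U_2^{+}$ by Proposition~\ref{diff-1}, the polynomial identity of Proposition~\ref{pol-1bis} for case $(a_3)$ reduces to $\mathbf{a}^{23}\mathrm{d}_2^{23}=\mathbf{a}_1^{23}\mathrm{d}_1^{23}$. Evaluating at the imaginary root $t_0:=i\sqrt{\lambda_2/\lambda_3}$ of $\mathbf{a}^{23}$ yields the dichotomy $\mathbf{a}_1^{23}(t_0)\,\mathrm{d}_1^{23}(t_0)=0$, so on $V$ we must have $\mathrm{d}_1^{23}(t_0)=0$. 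Lemma~\ref{2d1} then forces $\mathbf{p}_{13}=0$ on $V$, and the imaginary part formula in \eqref{Re} combined with $\mathbf{p}_{13}'(0)=0$ produces $g(\nabla_{e_1}e_1,e_2)=0$ on $V$.

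To eliminate the subcase $\mathbf{p}_{12}\neq 0$, observe that $\lambda_2<\lambda_3$ on $U_2^{+}$ excludes solution (iii) of Proposition~\ref{solve-1}, so on any open subset of $V$ where $\mathbf{p}_{12}\neq 0$ we must be in case (ii), i.e.\ $\mathrm{d}_1^{12}=\pm\sqrt{-8\lambda_2}\,\mathbf{a}^{12}$. Matching leading coefficients with \eqref{d12-u} forces $g(\nabla_{e_1}e_1,e_2)$ to equal a specific nonzero multiple of $\sqrt{-2\lambda_2}$, contradicting the previous paragraph. Hence $\mathbf{p}_{12}=0$ on the whole of $V$, and we are reduced to the situation $\mathbf{p}_{12}=\mathbf{p}_{13}=\mathbf{p}_{23}=0$ together with $g(\nabla_{e_1}e_1,e_2)=0$. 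Lemma~\ref{Ric111} then guarantees $\alpha:=g(\nabla_{e_1}e_1,e_3)\neq 0$.

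The contradiction is obtained by a second application of the dichotomy, now to $P^{13}=0$. The real part of $\mathrm{d}_1^{23}(t_0)=0$ in \eqref{Re}, together with $\mathbf{p}_{12}'(0)=0$, pins down $\mathscr{L}_{e_3}\lambda_3=\mathrm{d}_1^{13}(0)$ as an explicit nonzero multiple of $\alpha$. Evaluating $P^{13}=\mathbf{a}^{13}\mathrm{d}_2^{13}-\mathbf{a}_1^{13}\mathrm{d}_1^{13}\equiv 0$ at the imaginary root $t_1:=i\sqrt{-\lambda_2/(\lambda_3-\lambda_2)}$ of $\mathbf{a}^{13}$ gives $\mathbf{a}_1^{13}(t_1)\,\mathrm{d}_1^{13}(t_1)=0$; substituting the value of $\mathscr{L}_{e_3}\lambda_3$ into \eqref{d114} shows that $\operatorname{Re}\mathrm{d}_1^{13}(t_1)$ is a nonzero multiple of $\alpha$, so $\mathrm{d}_1^{13}(t_1)\neq 0$ and hence $\mathbf{a}_1^{13}(t_1)=0$. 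Reading Lemma~\ref{a13s} under $\mathbf{p}_{12}'(0)=\mathbf{p}_{23}'(0)=0$, the vanishing of the imaginary part yields $g(\nabla_{e_3}e_3,e_1)=0$ and that of the real part yields $2\lambda_2+\lambda_3=0$. A direct computation via Lemma~\ref{a1s} under all the collected vanishings then produces
\[
\mathbf{a}_1^{23}(t_0)=2\alpha(2\lambda_2+\lambda_3)=0,
\]
contradicting the defining property of $V$. The principal difficulty is the algebraic coincidence that makes both $\mathbf{a}_1^{13}(t_1)$ and $\mathbf{a}_1^{23}(t_0)$, after the many cancellations produced by $\mathbf{p}_{12}=\mathbf{p}_{13}=\mathbf{p}_{23}=0$ and the Bianchi identities, factor through the single quantity $2\lambda_2+\lambda_3$; this is what allows the $\mathbf{a}^{13}$-dichotomy to close the $\mathbf{a}^{23}$-dichotomy.
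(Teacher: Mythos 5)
Your proof is correct and follows essentially the same strategy as the paper: contradiction on $V=\{\mathbf{a}_1^{23}(t_0)\neq 0\}$, the $\mathbf{a}^{23}$-root dichotomy combined with Lemma~\ref{2d1} to force $\mathbf{p}_{13}=0$, the imaginary part of \eqref{Re} to get $g(\nabla_{e_1}e_1,e_2)=0$, hence $\mathbf{p}_{12}=0$ via Proposition~\ref{solve-1}, and then the $\mathbf{a}^{13}$-root dichotomy applied to $\mathrm{P}^{13}=0$.

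Two small divergences are worth noting. First, to eliminate the branch $\mathrm{d}_1^{13}(t_1)=0$ the paper argues via root-counting: since $\mathrm{d}_1^{13}$ has real coefficients and degree $\le 2$, admitting the purely imaginary root $t_1$ forces $(\mathrm{d}_1^{13})'(0)=0$, and then \eqref{Re} supplies a real root as well, so $\mathrm{d}_1^{13}\equiv 0$, whence $\alpha=0$ and a contradiction. You instead substitute the explicit value $\mathrm{d}_1^{13}(0)=\frac{6\lambda_2\lambda_3}{\lambda_2-\lambda_3}\alpha$ (extracted from $\operatorname{Re}\mathrm{d}_1^{23}(t_0)=0$) and compute $\operatorname{Re}\mathrm{d}_1^{13}(t_1)=-\frac{8\lambda_2\lambda_3}{\lambda_3-\lambda_2}\alpha\neq 0$; this is equally valid and slightly more self-contained, though it relies on having first shown $\alpha\neq 0$ via Lemma~\ref{Ric111}. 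Second, after obtaining $\mathbf{a}_1^{13}(t_1)=0$ and unwinding Lemma~\ref{a13s}, you reach the same $(2\lambda_2+\lambda_3)\cdot(\text{nonzero})=0$ bottleneck that the paper does, but you read it as $2\lambda_2+\lambda_3=0$ while the paper reads it as $(\mathrm{d}_1^{13})''(0)=0$; both are outright contradictions on $U_2^{+}$ (the former because $\lambda_2<\lambda_3<0$ implies $2\lambda_2+\lambda_3<0$, the latter because it forces $\alpha=0$). Consequently your final step --- computing $\mathbf{a}_1^{23}(t_0)=2\alpha(2\lambda_2+\lambda_3)$ via Lemma~\ref{a1s} to produce the self-referential contradiction ``$\mathbf{a}_1^{23}(t_0)=0$ on $V$'' --- is correct but superfluous: you already had a contradiction at $2\lambda_2+\lambda_3=0$, and could have stopped there as the paper does.
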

\begin{proof}
This is again by contradiction; we assume that the open subset of $U_2^{+}$ where $\mathbf{a}_1^{23}\left (i\sqrt{\frac{\lambda_2}{\lambda_3}}\right )\neq 0$ is not empty. On this set we thus have $\rm d_1^{23}\left ({\it i}\sqrt{\frac{\lambda_2}{\lambda_3}} \right )=0$ and further 
$\mathbf{p}_{13}=0$ by Lemma \ref{2d1}. Over open sets where $\mathbf{p}_{13}=0$ the second equation 
in \eqref{Re} forces 
$$ g(\nabla_{e_1}e_1,e_2)=g(\nabla_{e_3}e_3,e_2)=0.
$$
In particular $\rm d_1^{12}=0$ by (iii) in Lemma \ref{co-3}. Proposition \ref{solve-1} then implies that $\mathbf{p}_{12}=0$ as well.
Further on, since $\mathbf{p}_{13}=0$ we know that $\rm P^{13}=0$ hence 
$$ \rm {\mathbf{a}}^{13}d_2^{13}-\mathbf{a}_1^{13}\rm d_1^{13}=0.
$$
Since the roots of $\rm {\mathbf{a}}^{13}$ are $\pm i\sqrt{\frac{\lambda_2}{\lambda_2-\lambda_3}}$, there are two cases to consider.\\
{\bf{Case I:}} When $\rm d_1^{13}\left (\it i\sqrt{\frac{\lambda_2}{\lambda_2-\lambda_3}} \right)=0$. Since $\deg \rm d_1^{13} \leq 2$ this forces the vanishing of $(\rm d_1^{13})^{\prime}(0)$ hence $\rm d_1^{13}\left (\sqrt{\frac{3\lambda_2}{\lambda_2-\lambda_3}}\right )=0$ by the first equation in \eqref{Re}. As a polynomial of degree at most two admitting a real and a purely imaginary root must vanish, we conclude that $\rm \rm d_1^{13}=0$; in particular $g(\nabla_{e_1}e_1, e_3)=0$ by the explicit expression for $\rm d_1^{13}$ in \eqref{d114}. It follows that $\nabla_{e_1}e_1=0$ hence Lemma 
\ref{Ric111} and 
having $\la_2<\la_3$ in $U_2^+$ 
show that this case may not occur.\\
{\bf{Case II:}} When $\mathbf{a}_1^{13}\left (i\sqrt{\frac{\lambda_2}{\lambda_2-\lambda_3}} \right)=0$. 
Since $\mathbf{p}_{12}=0$, after taking into account \eqref{Re2} together with the first equation in \eqref{Re} we end up with 
$$\rm d_1^{13}(0)=\frac{\lambda_2+2\lambda_3}{\lambda_2-\lambda_3}\frac{(\rm d_1^{13})^{\prime \prime}(0)}{2}=-\frac{3\lambda_2}{\lambda_2-\lambda_3}\frac{(\rm d_1^{13})^{\prime \prime}(0)}{2}.$$
As $2\lambda_2+\lambda_3<0$ in $U_2^{+}$ it follows that $(\rm d_1^{13})^{\prime \prime}(0)=0$ and hence 
$g(\nabla_{e_1}e_1,e_3)=0$. We obtain a contradiction exactly as in Case I above, which proves the claim.
\end{proof}
To determine in which circumstances $i\sqrt{\frac{\lambda_2}{\lambda_3}}$ may be a root of $\mathbf{a}_1^{23}$ we use the same approach as in Lemma \ref{2d1}, however based this time on 
the general identities 
\begin{eqnarray}\label{Re3}
\begin{split}
\rm Re \ \mathbf{a}_1^{23}\left ({\it i}\sqrt{\frac{\lambda_2}{\lambda_3}} \right )=&\frac{\lambda_2-\lambda_3}{2\lambda_3}\rm Re \ d_1^{13}
\left ({\it i}\sqrt{\frac{\lambda_2+2\lambda_3}{\lambda_2-\lambda_3}} \right )-\frac{\lambda_2+\lambda_3}{\lambda_3}\mathbf{p}_{12}^{\prime}(0)\\
-\sqrt{\frac{\lambda_3}{\lambda_2}}\rm Im \ \mathbf{a}_1^{23}\left ({\it i}\sqrt{\frac{\lambda_2}{\lambda_3}} \right )=&\frac{2\lambda_2^{2}}{\lambda_3}g(\nabla_{e_1}e_1,e_2)-\frac{\lambda_2
+\lambda_3}{\lambda_3}\mathbf{p}_{13}
^{\prime}(0)
\end{split}
\end{eqnarray}
which follow from Lemma \ref{a1s} by direct algebraic computation.
\begin{Lem}\label{2d2} We have $\mathbf{p}_{13}=0$ over $U_2^{+}$.
\end{Lem}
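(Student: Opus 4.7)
The plan is to argue by contradiction. Suppose the open subset $V\subseteq U_2^{+}$ where $\mathbf{p}_{13}\neq 0$ is nonempty. Proposition~\ref{diff-2} then forces $\mathbf{p}_{12}=0$ on $V$, and in particular $\mathbf{p}_{12}^{\prime}(0)=0$ there. Since Proposition~\ref{2d1p} yields $\mathbf{a}_1^{23}(i\sqrt{\lambda_2/\lambda_3})=0$ throughout $U_2^{+}$, the first equation in \eqref{Re3} specialises on $V$ to
$$\mathrm{Re}\,\rm d_1^{13}\!\left(i\sqrt{\tfrac{\lambda_2+2\lambda_3}{\lambda_2-\lambda_3}}\right)=0.$$
Note that $\lambda_2<\lambda_3<0$ forces both $\lambda_2-\lambda_3<0$ and $\lambda_2+2\lambda_3<0$, so the radicand is positive and the argument is genuinely imaginary.

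Next I would classify $\rm d_1^{13}$ on $V$ using Proposition~\ref{solve-1} applied to the direction $(a_2)$. Because $\mathbf{p}_{13}\neq 0$ rules out part~(i), and because the hypothesis $\lambda_3<\lambda_2$ of part~(iii) is incompatible with $U_2^{+}$, only parts~(ii) and~(iv) survive, giving respectively
$$\pm\,\rm d_1^{13}=\sqrt{-2\lambda_3}\bigl((\lambda_2-\lambda_3)t^{2}+\lambda_2\bigr)\qquad\text{or}\qquad\pm\,\rm d_1^{13}=\sqrt{-2\lambda_3}\bigl((\lambda_3-\lambda_2)t^{2}+2\lambda_3-\lambda_2\bigr).$$
Both polynomials are even in $t$, hence take real values on the imaginary axis. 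A direct substitution of $z=\sqrt{(\lambda_2+2\lambda_3)/(\lambda_2-\lambda_3)}$ produces $\mp 2\lambda_3\sqrt{-2\lambda_3}$ in case~(ii) and $\pm 4\lambda_3\sqrt{-2\lambda_3}$ in case~(iv). Since $\lambda_3<0$ is nowhere zero on $U_2$, neither value vanishes, contradicting the displayed real-part condition. Hence $V=\emptyset$ and $\mathbf{p}_{13}=0$ on $U_2^{+}$.

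The argument is essentially forced once the preparatory machinery is in place: Propositions~\ref{diff-2} and~\ref{2d1p} reduce the entire second-order obstruction to a single root condition $\rm d_1^{13}(iz)=0$ at one distinguished imaginary value of $t$, and Proposition~\ref{solve-1} supplies a short explicit list of algebraic candidates for $\rm d_1^{13}$. The only step requiring care is tracking which solution branches of Proposition~\ref{solve-1} remain admissible in $U_2^{+}$ for the direction $(a_2)$—a purely bookkeeping matter—after which the verification that none of them satisfies the root condition is a one-line polynomial evaluation. No further differential Bianchi manipulations, nor any new use of second-order derivatives of $\Ric$, are needed beyond those already encapsulated in \eqref{Re3}.
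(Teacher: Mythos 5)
Your proof is correct and follows essentially the same route as the paper: by contradiction reduce to $\mathbf{p}_{12}=0$ via Proposition~\ref{diff-2}, observe that the admissible forms of $\rm d_1^{13}$ from Proposition~\ref{solve-1} are even so that the vanishing of $\mathrm{Re}\,\rm d_1^{13}$ extracted from the first equation in \eqref{Re3} becomes an honest root condition, and then evaluate the two explicit candidates to obtain $\pm 2\lambda_3\sqrt{-2\lambda_3}$ and $\pm 4\lambda_3\sqrt{-2\lambda_3}$, both nonzero. This is exactly the paper's argument (the paper phrases the evenness observation as importing $(\rm d_1^{13})'(0)=0$ from the proof of Lemma~\ref{2d1}).
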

\begin{proof}
From Proposition \ref{2d1p} we know that $\mathbf{a}_1^{23}\left (i\sqrt{\frac{\lambda_2}{\lambda_3}}\right )=0$ on $U_2^{+}$. 
By contradiction, we work on the open set where $\mathbf{p}_{13}\neq 0$, which we assume to be non-empty. Arguments entirely similar to those in the proof of Lemma \ref{2d1} then yield $(\mathrm{d}_1^{13})^{\prime}(0)=0$. Furthermore, on the region under consideration 
$\mathbf{p}_{12}=0$ by Proposition \ref{diff-2}, hence the first equation in \eqref{Re3} shows that $\mathbf{a}_1^{23}\left (i\sqrt{\frac{\lambda_2}{\lambda_3}}\right )=0$ implies ${\rm d}_1^{13}\left (i\sqrt{\frac{\lambda_2+2\lambda_3}{\lambda_2-\lambda_3}}\right)=0$. However the possible instances for $\rm d_1^{13}$ as considered in the proof of Lemma \ref{2d1} satisfy 
$$ {\rm d}_1^{13}\left (i\sqrt{\frac{\lambda_2+2\lambda_3}{\lambda_2-\lambda_3}}\right)=\pm 2\sqrt{-2\lambda_3}\lambda_3 \ \mathrm{or} \ {\rm d}_1^{13}\left (i\sqrt{\frac{\lambda_2+2\lambda_3}{\lambda_2-\lambda_3}}\right)=\pm 4\sqrt{-2\lambda_3}\lambda_3.
$$
It follows that $\lambda_3=0$, which is a contradiction, hence $\mathbf{p}_{13}=0$ over $U_2^{+}$ as claimed.
\end{proof}
The main result of this section now reads
\begin{Thm} \label{main1}
Let $(M^3,g)$ be such that for any $(p,v) \in SM$ there exists a (possibly short time) solution $u=u(t) \in \rm Sym^2_0(\gamma^{\prime}(t)^{\perp})$ to the Riccati equation \eqref{main11i}, where $\gamma$ is the geodesic through $v$. Then the metric $g$ satisfies 
$\lambda_2=\lambda_3$ on $U_2$.
\end{Thm}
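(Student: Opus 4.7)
The plan is to deduce Theorem~\ref{main1} from the machinery already developed in Sections~\ref{prel-conv}--\ref{diff}. Since $(M^3,g)$ decomposes as the union of open sets $U_0, U_1, U_2$ (with their union dense), and $g$ is already flat on $U_0$ by definition while $U_1=\emptyset$ by Proposition~\ref{U1}, it suffices to show $U_2=\emptyset$. By Remark~\ref{rmk1} we may even restrict attention to the region $U_2^+$ where $\lambda_2<\lambda_3$.

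First, I would extract the full metric information encoded in the already-established vanishing $\mathbf{p}_{13}=0$ (Lemma~\ref{2d2}) and $\mathbf{p}_{23}=0$ (Proposition~\ref{diff-1}) on $U_2^+$. Reading off coefficients of these polynomials yields a substantial set of vanishing connection coefficients for the Ricci eigenframe $\{e_1,e_2,e_3\}$: in particular $g(\nabla_{e_1}e_2,e_3)=g(\nabla_{e_2}e_3,e_1)=g(\nabla_{e_3}e_1,e_2)=0$, together with two linear relations tying $g(\nabla_{e_i}e_i,e_j)$ ($i\neq j$) to the eigenvalues. At this point the only potentially independent connection coefficients are the four quantities $g(\nabla_{e_1}e_1,e_2)$, $g(\nabla_{e_1}e_1,e_3)$, $g(\nabla_{e_2}e_2,e_3)$, $g(\nabla_{e_3}e_3,e_2)$, together with $\Li_{e_i}\lambda_k$.

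Next, I would perform a case analysis on the polynomial $\mathbf{p}_{12}$, which has not yet been constrained. In the first case, if $\mathbf{p}_{12}$ vanishes identically on an open subset of $U_2^+$, then combining the resulting additional relations with Lemma~\ref{Ric111} (forcing $\ric(\nabla_{e_1}e_1,\nabla_{e_1}e_1)=-\tfrac{\sca^2}{2}$) and with the Bianchi identities~\eqref{B2-bis} should quickly yield an algebraic contradiction on the ratio $r=\lambda_3/\lambda_2$, in the spirit of the final polynomial argument in the proof of Proposition~\ref{diff-1}. In the complementary case $\mathbf{p}_{12}\neq 0$ on an open set, Proposition~\ref{solve-1} forces $\mathrm{d}_1^{12}$ to take one of the explicit forms there; combined with the formula in Lemma~\ref{co-3}(iii), this determines $g(\nabla_{e_1}e_1,e_2)$ and $\Li_{e_2}\lambda_2$ in closed form as explicit multiples of $\sqrt{-\lambda_2}$, analogously to Lemmas~\ref{l4}--\ref{l5}.

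Finally, with all the connection coefficients and Lie derivatives of $\lambda_2,\lambda_3$ pinned down, the coframe $\{e^1,e^2,e^3\}$ together with the eigenfunctions satisfies an explicit exterior differential system generalizing~\eqref{EDS1}--\eqref{EDS2}. I would check the integrability conditions $\mathrm{d}^2 e^i=0$ and $\mathrm{d}^2\lambda_k=0$; the expectation, based on the pattern seen in the proof of Proposition~\ref{diff-2}, is that these force a polynomial identity in the ratio $r=\lambda_3/\lambda_2\in(0,1)$ with no root in that interval, yielding the required contradiction and hence $U_2^+=\emptyset$. The main obstacle is precisely this last step: verifying that the algebraic system obtained after merging the explicit solutions for $\mathbf{p}_{12}$, the vanishing of $\mathbf{p}_{13},\mathbf{p}_{23}$, the Bianchi relations, and the curvature constraint of Lemma~\ref{Ric111} admits no admissible solution. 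The polynomial conditions involved are of moderately high degree, and careful bookkeeping is required to avoid spurious cases, particularly because the sign ambiguities $\varepsilon_i\in\{\pm 1\}$ from Proposition~\ref{solve-1} proliferate across the case distinctions.
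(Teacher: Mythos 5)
Your proposal has the right high-level architecture (eliminate $U_2^{+}$ by contradiction, exploit $\mathbf{p}_{23}=0$ and $\mathbf{p}_{13}=0$, then examine $\mathbf{p}_{12}$), but it stops short of an actual proof and, more importantly, misses the specific mechanism by which the paper closes the argument. Two concrete gaps:

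First, the case split on $\mathbf{p}_{12}$ is unnecessary and you never resolve either branch. The paper never needs to \emph{assume} $\mathbf{p}_{12}=0$ or $\neq 0$: once $\mathbf{p}_{13}=0$ is in hand (Lemma~\ref{2d2}) and $\mathbf{a}_1^{23}\left(i\sqrt{\lambda_2/\lambda_3}\right)=0$ is in hand (Proposition~\ref{2d1p}), the second identity in~\eqref{Re3} immediately forces $g(\nabla_{e_1}e_1,e_2)=g(\nabla_{e_3}e_3,e_2)=0$, which by Lemma~\ref{co-3}(iii) gives $\mathrm{d}_1^{12}=0$, which by Proposition~\ref{solve-1} gives $\mathbf{p}_{12}=0$. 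Your outline does not invoke Proposition~\ref{2d1p} at all, and without that input there is no way to pin $g(\nabla_{e_1}e_1,e_2)$ down and the branch $\mathbf{p}_{12}\neq 0$ cannot be eliminated as cleanly as you hope.

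Second, your proposed endgame --- checking the integrability conditions $\mathrm{d}^2 e^i=0$, $\mathrm{d}^2 \lambda_k=0$ --- is not what the paper does and is not justified as written; you yourself flag it as ``the main obstacle.'' After $\mathbf{p}_{12}=\mathbf{p}_{13}=\mathbf{p}_{23}=0$ the paper instead exploits the identity $\mathrm{P}^{13}=\mathbf{a}^{13}\mathrm{d}_2^{13}-\mathbf{a}_1^{13}\mathrm{d}_1^{13}=0$: since the purely imaginary roots of $\mathbf{a}^{13}$ must be roots of $\mathrm{d}_1^{13}$ or of $\mathbf{a}_1^{13}$, one splits into two sub-cases. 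In the first sub-case the degree bound forces $\mathrm{d}_1^{13}=0$, hence $\nabla_{e_1}e_1=0$, contradicting Lemma~\ref{Ric111}. In the second, the imaginary part of $\mathbf{a}_1^{13}$ at the relevant root forces $g(\nabla_{e_2}e_2,e_1)=g(\nabla_{e_3}e_3,e_1)=0$; then one computes the sectional curvature $g(R(e_2,e_1)e_1,e_2)$ directly from the connection coefficients and compares with~\eqref{K2}, obtaining $-(\lambda_3-\lambda_2)^2=(\lambda_2+\lambda_3)^2$, a contradiction. This is an \emph{algebraic} curvature identity, not an exterior-differential-system integrability check, and it is the crucial missing step in your outline. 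So the proposal identifies the right ingredients but has a genuine gap at both the elimination of $\mathbf{p}_{12}$ and the final contradiction.
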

\begin{proof}
To summarise the information obtained so far, over the open set $U_2^{+}$ we have $\mathbf{a}_1^{23}\left (i\sqrt{\frac{\lambda_2}{\lambda_3}}\right )=0$ by Proposition \ref{2d1p} and also $\mathbf{p}_{13}=0$ by Lemma \ref{2d2}. We now assume that 
$U_2^{+}$ is not empty and work towards obtaining a contradiction, following the same line of argumentation as in the proof of Proposition \ref{2d1p}. Using this time the second equation in \eqref{Re3} together with $\mathbf{p}_{13}=0$ we find 
$$ g(\nabla_{e_1}e_1,e_2)=g(\nabla_{e_3}e_3,e_2)=0.
$$
Lemma \ref{co-3}, (iii) then yields $\mathrm{d}_1^{12}=0$ hence $\mathbf{p}_{12}=0$ by Proposition \ref{solve-1}. There remains to spell out the numerical content of the remaining eigenvalue equations for the polynomials $\rm d_1^{13}$ respectively $\mathbf{a}_1^{13}$.\\
{\bf{Case I:}} When ${\rm d}_1^{13}\left (i\sqrt{\frac{\lambda_2}{\lambda_2-\lambda_3}} \right)=0$. Since $\deg \rm d_1^{13} \leq 2$, we obtain $(\rm d_1^{13})^{\prime}(0)=0$, which implies ${\rm d}_1^{13}\left (i\sqrt{\frac{\lambda_2+2\lambda_3}{\lambda_2-\lambda_3}}\right )=0$ by the first equation in \eqref{Re3}. As the degree of $\rm d_1^{13}$ is at most two, we derive that ${\rm d}_1^{13}=0$. The rest of the arguments for showing that this case may not occur are identical to those in the proof of Case I in Proposition \ref{2d1p}.  \\
{\bf{Case II:}} When $\mathbf{a}_1^{13}\left (i\sqrt{\frac{\lambda_2}{\lambda_2-\lambda_3}} \right)=0$. First record the general identity 
\begin{equation*} \label{Im1}
    {\rm Im}\ \mathbf{a}_1^{13}\left (i\sqrt{\frac{\lambda_2}{\lambda_2-\lambda_3}} \right)=-\frac{1}{\lambda_2-\lambda_3}\sqrt{\frac{\lambda_2}{\lambda_2-\lambda_3}}\left((2\lambda_2+\lambda_3)\mathbf{p}_{23}^{\prime}(0)-4\lambda_2\lambda_3 g(\nabla_{e_3}e_3,e_1)\right)
\end{equation*}
which follows directly from Lemma \ref{a13s}. Since $\mathbf{p}_{23}=0$ by Proposition \ref{diff-1}, this forces $g(\nabla_{e_3}e_3,e_1)=g(\nabla_{e_2}e_2,e_1)=0$. The vanishing of the polynomials 
$\mathbf{p}_{12}$ etc. guarantees that $g(\nabla_{e_i}e_j,e_k)=0$ for $i \neq j \neq k$; taking this into account, a short computation also based on having $g(\nabla_{e_1}e_1,e_2)=g(\nabla_{e_2}e_2,e_1)=0$ shows that the curvature term $g(R(e_2,e_1)e_1,e_2)=-g(\nabla_{e_1}e_1,\nabla_{e_2}e_2)$. However 
$$g(\nabla_{e_1}e_1,\nabla_{e_2}e_2)=g(\nabla_{e_1}e_1,e_3)g(\nabla_{e_2}e_2,e_3)=
\frac{\lambda_3}{\lambda_3-\lambda_2}(g(\nabla_{e_1}e_1,e_3))^2
=-\frac{\lambda_3-\lambda_2}{2},
$$ after successively using that $\mathbf{p}_{12}=0$ and Lemma 
\ref{Ric111}. As we already know by \eqref{K2} that $g(R(e_2,e_1)e_1,e_2)=-\frac{\lambda_3-\lambda_2}{2}$, we obtain a contradiction, showing that the set $U_2^{+}$ is empty; an entirely analogous argument shows that the set $U_2^{-}$ is empty as well hence 
$\lambda_2=\lambda_3$ on $U_2$ as claimed.
\end{proof}

The section ends with the following clarification regarding \cite{KPS}.
\begin{Rem} \label{err2}
In \cite{KPS} the erroneous (see counterexample below) fact that,in dimension $3$, the eigenvectors of the Ricci tensor do come from an orthogonal coordinate system was assumed and used.
On $\mathbb{R}^3$ consider metrics $g$ with respect to which the co-frame $e=\{e^1,e^2,e^3\}$ is orthonormal and satisfies $\mathrm{d}e^1=\Lambda e^2 \wedge e^3$ for some $\Lambda \in \mathbb{R}, \Lambda \neq 0$ as well as $\mathrm{d}e^2=\mathrm{d}e^3=0$. This co-frame is an eigenframe for the Ricci tensor, with constant 
eigenfunctions $\{-\frac{\Lambda^2}{2},\frac{\Lambda^2}{2}, \frac{\Lambda^2}{2}\}$. However the co-frame $e$ does not induce an orthogonal coordinate system since $\mathrm{d}e^1 \wedge e^1$ does not vanish identically.
\end{Rem}
\section{Further geometry on $U_2$} \label{S5}
As we have just seen the Ricci tensor satisfies $\lambda_2=\lambda_3=\tfrac{1}{2}\sca$ on $U_2$. Equivalently, the curvature of $g$ has nullity, that is 
$R(e_1, \cdot)=0$. In dimension $3$ metrics with non-trivial Riemann nullity have been studied, under additional assumptions, in \cite{Bl}; see also \cite{MoSe} for results in the higher dimensional case. Without attempting to describe metrics 
with Riemann nullity in general, we investigate in this section how this curvature 
assumption interacts with the Riccati equation set-up by using the same algebraic approach as previousy. We begin with a few 
observations and notation set-up.

By Lemma \ref{Ric111} it follows that 
$\nabla_{e_1}e_1=0$. Further on we consider the distribution $\mathscr{H}:=e_1^{\perp}$ together with the shape operator 
$$ Q:TM \to TM \ \mathrm{where} \ QU:=\nabla_{U}e_1
$$
whenever $U \in TM$; since the vector field $e_1$ is totally geodesic we have $Q\mathscr{H} \subseteq \mathscr{H}$. This operator allows splitting the Levi-Civita connection according to 
$$ \nabla_XY=\widetilde{\nabla}_XY-g(QX,Y)e_1
$$
for all $X,Y$ in $\Gamma(\mathscr{H})$. By imposing $\widetilde{\nabla}e_1=0$ we obtain a metric connection 
in $TM$, which is actually the Bott connection of the foliation induced by the vector field $e_1$. We observe that the derivatives of the Ricci form may be computed according to the following 
\begin{Lem} \label{ders}The following hold whenever $X \in \Gamma(\mathscr{H})$
\begin{itemize}
\item[(i)] we have $(\nabla_X\ric)(X,X)=\tfrac{1}{2}g(X,X)(\mathrm{d}\sca)X $
\item[(ii)] the second order derivative of $\ric$ reads 
 \begin{equation}\label{2ndR-b}
 \begin{split}
 (\nabla_{X,X}^2\ric)(X,X)=&\tfrac{1}{2}g(X,X)(\widetilde{\nabla}_X\mathrm{d}\sca)X\\
 -&\sca g(QX,X) \left ( \tfrac{1}{2}\tr(Q)g(X,X)+
 g(QX,X)\right ). 
 \end{split}
 \end{equation}
\end{itemize}
\end{Lem}
\begin{proof}
(i) follows from the more general identity 
\begin{equation} \label{pR}
(\nabla_X\ric)(Y,Z)=\tfrac{1}{2}g(Y,Z)(\mathrm{d}\sca)X 
\end{equation}
for all 
$X,Y$ and $Z$ in $\Gamma(\mathscr{H})$. This is proved by differentiating in $\ric(Y,Z)=\tfrac{\sca}{2}g(Y,Z)$.\\
(ii) using (i) and also \eqref{pR} shows that 
\begin{equation*} 
\begin{split}
 (\nabla_{X,X}^2\ric)(X,X)=&X(\nabla_X\ric)(X,X)-(\nabla_{\nabla_XX}\ric)(X,X)-2(\nabla_X \ric)(\nabla_XX,X)\\
 =&\tfrac{1}{2}g(X,X)(\widetilde{\nabla}_X\mathrm{d}\sca)X+
 g(QX,X) \left ((\nabla_{e_1} \ric)(X,X)+2 (\nabla_{X} \ric)(e_1,X)\right )\\
 =&\tfrac{1}{2}g(X,X)(\widetilde{\nabla}_X\mathrm{d}\sca)X+g(QX,X) \left ( \tfrac{1}{2}\Li_{e_1}\sca g(X,X)-\sca g(QX,X) \right ).
 \end{split}
\end{equation*}
The claim follows by observing that the differential Bianchi identity \ref{B2-bis} reduces to 
\begin{equation} \label{BB3}
\Li_{e_1}\sca=-\sca \tr(Q).
\end{equation}
\end{proof}
Further on, we split $Q=S+A$ where $S \in \mathrm{Sym}^2\mathscr{H}$ and $A$ in $\Lambda^2\mathscr{H}$. All subsequent computations 
will be performed with respect to an orthonormal frame $\{e_2,e_3\}$ in $\mathscr{H}$ with respect to which $S$ is diagonal, that is 
$$ Se_2=\Lambda_2e_2  \ \mathrm{and} \ Se_3=\Lambda_3e_3.
$$
The algebraic arguments in Proposition \ref{solve-2} carry over the case $\lambda_2=\lambda_3$ and show that the possible solutions to \eqref{P-eqn23} are 
\begin{itemize}
\item[$\bullet$] $\mathbf{c}=0$, when $\mathrm{P}=0$
\item[$or$]
\item[$\bullet$] $\mathbf{c} \neq 0$, when $\mathrm{d}_1=0$ and $\mathrm{P}=\pm \tfrac{1}{2}\sca \sqrt{-\sca} \mathbf{c}(t^2+1)^2$.
\end{itemize}
\subsection{When $\mathbf{p}_{23} \neq 0$} \label{51}
In this section we work with directions $X=te_2+e_3 \in \mathscr{H}$ with respect to which we must have 
$\mathrm{d}_1^{23}=(\nabla_X\ric)(X,X)=0$. From Lemma \ref{ders}, (i) it follows that $\Li_{e_2}\sca=\Li_{e_3}\sca=0$. Since 
\begin{equation*}
\mathrm{P}^{23}=\mathbf{a}^{23}{\rm d}_2^{23}-\mathbf{a}_1^{23}{\rm d}_1^{23} \ \mathrm{ and} \ -4{\bf a}^{23}=\sca(t^2+1)
\end{equation*}
 the vanishing of $\mathrm{d}_1^{23}$ entails 
\begin{equation} \label{sol223}
\mathrm{d}_2^{23}=\pm 2 \sqrt{-\sca}\mathbf{p}_{23}(t^2+1).
\end{equation}
Also recall that according to the definitions we have 
$$\mathrm{d}_2^{23}= (\nabla_{X,X}^2\ric)(X,X)+2\tr (\ring{\J}(X) \circ (\ring{\J}(X)).$$
In addition, record that 
\begin{equation} \label{jac23}
\tr (\ring{\J}(X) \circ (\ring{\J}(X))=C\sca^2 (t^2+1)^2
\end{equation} 
where the constant 
$C=\tfrac{1}{8}>0$; this follows by direct computation from \eqref{J2}. At the same time part (ii) in Lemma 
\ref{ders} yields 
\begin{equation*} \label{2ndR}
\begin{split}
 (\nabla_{X,X}^2\ric)(X,X)
 =&g(QX,X) \left ( \tfrac{1}{2}\Li_{e_1}\sca g(X,X)-\sca g(QX,X) \right )
 \end{split}
\end{equation*}
for all $X \in \mathscr{H}$. Thus \eqref{sol223} 
is equivalent with the polynomial constraint 
\begin{equation}\label{2ndR}
\begin{split}
&(t^2\Lambda_2+\Lambda_3)(\tfrac{1}{2}\Li_{e_1}\sca(t^2+1)-\sca(t^2\Lambda_2+\Lambda_3))\\
=&(t^2+1)(-2C\sca^2 (t^2+1)\pm2\sqrt{-\sca}\mathbf{p}_{23}).
\end{split}
\end{equation}
This clearly forces 
$$\Lambda_2=\Lambda_3=\Lambda$$ thus 
$$ \mathbf{p}_{23}=-\frac{\sca}{2}g(Ae_2,e_3)(t^2+1)
$$
by taking into account the definition of $\mathbf{p}_{23}$. The remainder of \eqref{2ndR} therefore reads 
\begin{equation} \label{c123}
\Lambda(\tfrac{1}{2}\Li_{e_1}\sca-\sca\Lambda)=-2C\sca^2\pm \sca\sqrt{-\sca} g(Ae_2,e_3)
\end{equation}
which can be solved as outlined in the Lemma below. Record that we also have 
$$-2\sca \Lambda=\Li_{e_1}\sca$$
by the Bianchi identity as phrased in \eqref{BB3}.
\begin{Lem} \label{l51}
The following hold on the open subset of $U_2$ where $\mathbf{p}_{23} \neq 0$
\begin{itemize}
\item[(i)] we have $\Lambda=0$ and $\mathrm{d} \sca=0$
\item[(ii)]$e_1$ is a unit Killing vector field with $\mathscr{L}_{e_1}g=0$ and $\mathrm{d}e^1=\pm 4C \sqrt{-\sca}e^{23}$.
\end{itemize}
\end{Lem}
\begin{proof}
(i) since $\mathrm{d} \sca$ vanishes on $e_2$ and $e_3$ we have $g([e_2,e_3],e_1)\Li_{e_1} \sca=0$. However the specific algebraic form for $Q$ 
above yields 
$g([e_2,e_3],e_1)=2g(Ae_3,e_2)$. Since $\mathbf{p}_{23} \neq 0$ it follows that $A \neq 0$ 
which entails $\Li_{e_1} \sca=0$. Hence $\Lambda=0$ and further 
$\mathrm{d} \sca=0$ as claimed.\\
(ii) since $\Lambda=0$ equation \eqref{c123} grants $g(Ae_2,e_3)=\pm 2C \sqrt{-\sca}$ which leads to the claimed expression for 
$\mathrm{d}e^1$.
\end{proof}
This information suffices for drawing the final conclusion in this section.
\begin{Pro} \label{p52}
We have $\mathbf{p}_{23}=0$ on $U_2$.
\end{Pro}
\begin{proof}
We work on the open subset where $\mathbf{p}_{23} \neq 0$ which we assume not empty.Then $Q=\tfrac{1}{2}\mathrm{d}e^1$; at the same time 
, since $R(e_1, \cdot)=0$ we must have $(\nabla_{U_1}Q)U_2=(\nabla_{U_2}Q)U_1$ for all $U_1,U_2$ in $TM$. Because $Q$ is a closed 
$2$-form it follows that $\nabla Q=0$ or equivalently $\nabla (e^{23})=0$, after using part (ii) in Lemma \ref{51}. Applying the Hodge star thus yields $\nabla e_1=0$ hence $C \sqrt{-\sca}=0$ a contradiction.
\end{proof}
\subsection{Proof of theorem \ref{main-intro0}} \label{pf-main}
We begin with the observation that only two types of geometries may locally occur within $U_2$.
\begin{Lem} \label{l53}
The following hold on $U_2$
\begin{itemize}
\item[(i)] we have $Q=\Lambda \id_{\mathscr{H}}$ for some function $\Lambda$
\item[(ii)] the open subset of $U_2$ where $\Lambda \neq 0$ is locally a metric cone 
$(\mathbb{R}_{>0} \times \Sigma, dr^2+r^2g_{\Sigma})$ for some Riemann surface $(\Sigma,g_{\Sigma})$. In addition, the function 
$\Lambda$ satisfies $\Lambda^2=r^{-2}$
\item[(iii)] on any open subset of $U_2$ where $\Lambda=0$ we have that $\nabla e_1=0$; thus locally, such a subset 
is the Riemannian product $(\mathbb{R} \times \Sigma, dr^2+g_{\Sigma})$ for some Riemann surface $(\Sigma,g_{\Sigma})$.
\end{itemize}
\end{Lem}
\begin{proof}
Proposition \ref{p52} ensures that $\mathbf{p}_{23}=0$. The definition of $\mathbf{p}_{23}$ thus leads to 
$A=0$ and $\Lambda_2=\Lambda_3=\Lambda$; in other words $Q=\Lambda \mathrm{id}_{\mathscr{H}}$. From $R(e_1,\cdot)e_1=0$ we get 
$\nabla_{e_1}Q+Q^2=0$, equivalently 
\begin{equation*}
\Li_{e_1}\Lambda+\Lambda^2=0.
\end{equation*}
Also keep in mind that $-2\sca\,\Lambda=\Li_{e_1}\sca$, by the Bianchi identity. Work now on the open set set where 
$\Lambda \neq 0$ and consider the vector field $\mathbf{V}:=\tfrac{1}{\Lambda}e_1$. We have $\nabla_{X}\mathbf{V}=X$ when 
$X \perp e_1$; in addition, 
\begin{equation*}
\nabla_{e_1}\mathbf{V}=\Li_{e_1}(\tfrac{1}{\Lambda})e_1=e_1.
\end{equation*}
That is $\nabla^g\mathbf{V}=1_{TM}$. As it is well known, see e.g. \cite{AL}, it follows that $(U_2,g)$ is locally a metric cone 
$(\mathbb{R}_{>0} \times \Sigma, \mathrm{d}r^2+r^2g_{\Sigma})$ for some Riemann surface $(\Sigma,g_{\Sigma})$; moreover the cone vector field reads 
$\mathbf{V}=r\partial_r$. Record that, by construction 
$$ \Li_{\mathbf{V}}\sca=\tfrac{1}{\Lambda}\Li_{e_1}\sca=-2\,\sca.
$$
This is consistent with the general formula for the scalar curvature of metric cones which reads $\sca=r^{-2}(\sca_{\Sigma}-2)$. The expression for $\Lambda$ follows from $g(\mathbf{V}, \mathbf{V})=\Lambda^{-2}$.\\
Finally, on any open subset where $\Lambda=0$ we have $Q=0$ and the claim is fully proved.
\end{proof}
The rest of the proof consists in exploiting, at first in metric cone set-up in part (ii) of Lemma \ref{l53},
 the remaining algebraic constraint encoded in having $\rm P^{23}=0$. The latter reads 
\begin{equation} \label{prod-eqn}
\mathbf{a}^{23}{\rm d}_2^{23}=\mathbf{a}_1^{23}{\rm d}_1^{23}
\end{equation}
where $\mathrm{d}_2^{23}=2\tr (\ring{\J}(X) \circ \ring{\J}(X))+(\nabla^2_{X,X}\ric)(X,X)$.
We show below this is equivalent with a Hessian type equation on the scalar curvature $\sca_{\Sigma}$, which moreover can be explicitly solved. The list of the polynomial expressions involved in \eqref{prod-eqn} reads as follows
\begin{itemize}
\item[$\bullet$] $-4{\bf a}^{23}=\sca(t^2+1)$, see Lemma \ref{jac-f}
\item[$\bullet$] $2{\rm d}_1^{23}=\left ((\Li_{e_2}\sca) t+\Li_{e_3}\sca \right )(t^2+1)=(\mathrm{d} \sca)X (t^2+1)$; use the expansion in \eqref{d123}
\item[$\bullet$] $-4\mathbf{a}_1^{23}=(\mathrm{d} \sca)X (t^2+1)$,
see proof of Lemma \ref{a1s}
\item[$\bullet$] $\tr (\ring{\J}(X) \circ \ring{\J}(X))=C\sca^2(t^2+1)^2$ where the constant $C=\tfrac{1}{8}>0$, see \eqref{jac23}
\item[$\bullet$] $(\nabla^2_{X,X}\ric)(X,X)=\tfrac 12(\widetilde{\nabla}_X \mathrm{d}\sca)X (t^2+1)-2\sca \Lambda^2(t^2+1)^2$; use Lemma \ref{ders}.
\end{itemize}
The polynomial equation in \eqref{prod-eqn} then reads 
\begin{equation*} \label{hess}
\sca (\widetilde{\nabla}_X \mathrm{d}\sca)X-
(\mathrm{d}\sca \otimes \mathrm{d}\sca)(X,X)+4\sca^2(C\sca-\Lambda^2)g(X,X)=0.
\end{equation*}
Since $\Lambda^2=r^{-2}$ and $\sca=r^{-2}(\sca_{\Sigma}-2)<0$ is nonwhere vanishing 
the projection of this equation onto $\Sigma$ is 
\begin{equation} \label{hess-D}
D\mathrm{d} \ln (2-\sca_{\Sigma})=4(1+C(2-\sca_{\Sigma}))g_{\Sigma}
\end{equation}
where $D$ indicates the Levi-Civita connection of $g_{\Sigma}$. Here we have taken into account that 
$\widetilde{\nabla}$ is the horizontal lift of the Levi-Civita connection of $g_{\Sigma}$ to $\mathscr{H}$.\\

In an entirely similar way, on any open subset of $U_2$ where $\Lambda=0$ we have that, locally, the geometry is that of a Riemannian product $(\mathbb{R} \times \Sigma,\mathrm{d}r^2+g_{\Sigma})$ for some Riemann surface $(\Sigma,g_{\Sigma})$; see Lemma \ref{l53}, (iii). Then \eqref{prod-eqn} reads 
\begin{equation} \label{hess-Dp}
D\mathrm{d} \ln (2-\sca_{\Sigma})=4C(2-\sca_{\Sigma})g_{\Sigma}.
\end{equation}

All ingredients are now in place in order to give the \\
$\\$
{\bf{Proof of theorem \ref{main-intro0}.}}\\
We will show that the set $U_2$ is empty by showing that the Hessian equations in \eqref{hess-D} and \eqref{hess-Dp} admit 
no solution. We thus deal with a Riemann surface $(\Sigma, g_{\Sigma})$ admitting a locally defined function $\varphi:=
\ln(2-\sca_{\Sigma})$ satisfying 
\begin{equation*}
D(\mathrm{d}\varphi)=(C_0+\frac{1}{2}e^{\varphi})g_{\Sigma}
\end{equation*}
where $C_0 \in \{4,0\}$; here we taken into account that $C=\tfrac{1}{8}$. Differentiating shows that $D^2_{X,Y}\mathrm{grad} \varphi=\tfrac{1}{2}e^{\varphi}
g_{\Sigma}(\mathrm{grad}\varphi,X) Y$ hence 
$$2R^D(X,Y)\mathrm{grad}\varphi=e^{\varphi}\left (g_{\Sigma}(\mathrm{grad}\varphi,X) Y-
g_{\Sigma}(\mathrm{grad}\varphi,Y) X \right )$$ 
for all $X,Y \in T\Sigma$. Taking the trace and using that $\ric_{g_{\Sigma}}=\tfrac{\sca_{\Sigma}}{2}g_{\Sigma}$ shows that 
$$ -\sca_{\Sigma}\mathrm{d}\varphi=e^{\varphi}\mathrm{d}\varphi.
$$
Equivalently $\mathrm{d}\varphi=0$ which is prohibited since $C_0+\tfrac{1}{2}e^{\varphi}>0$. Therefore 
the equations \eqref{hess-D} and \eqref{hess-Dp} do not admit solutions, showing that the set $U_2$ is empty. Since 
$U_1$ is empty as well we conclude, by density, that $g$ is flat over $M$.

\subsection*{Conflict of interest}On behalf of all authors, the corresponding author \footnote{Paul-Andi Nagy} states that there is no conflict of interest.

\end{document}